\newlength\myindent
\newcommand{\Rmnum}[1]{\expandafter\@slowromancap\romannumeral #1@}
\newcommand{\inner}[3][]{{\langle #2,#3 \rangle_{#1}}}
\newtheorem{definition}{Definition}[section]
\newtheorem{theorem}{Theorem}[section]
\newtheorem{lemma}{Lemma}[section]
\newtheorem{assumption}{Assumption}[section]
\numberwithin{equation}{section}
\DeclareMathOperator{\grad}{\mathrm{grad}}
\DeclareMathOperator*{\argmin}{arg\,min}
\providecommand{\keywords}[1]
{
\small 
\textbf{Keywords:}
}
\title{Federated Learning on Riemannian Manifolds  with Differential Privacy}
\author[1]{Zhenwei Huang}
\author[1]{Wen Huang \thanks{Corresponding author: wen.huang@xmu.edu.cn}}
\author[2]{Pratik Jawanpuria}
\author[2]{Bamdev Mishra}
\affil[1]{ Xiamen University, Xiamen, China.\vspace{.15cm}}
\affil[2]{ Microsoft, Hyderabad, India.}
\date{}
\begin{document}

\maketitle

\begin{abstract}

In recent years, federated learning (FL) has emerged as a prominent paradigm in distributed machine learning. 
Despite the partial safeguarding of agents' information within FL systems, a malicious adversary can potentially infer sensitive information through various means. 
In this paper, we propose a generic private FL framework defined on Riemannian manifolds (PriRFed) based on the differential privacy (DP) technique. 
We analyze the privacy guarantee while establishing the convergence properties. To the best of our knowledge, this is the first federated learning framework on Riemannian manifold with a privacy guarantee and convergence results. Numerical simulations are performed on synthetic and real-world datasets to showcase the efficacy of the proposed PriRFed approach. 

\begin{keywords}

Riemannian Federated Learning, Differential privacy, Privacy-preserving machine learning,  Riemannian stochastic optimization
\end{keywords}

\end{abstract}

\section{Introduction}  \label{sec:intr}

This paper considers designing a distributed optimization algorithm to solve the federated learning problems in the form of 
\begin{align} \label{introprob}
	\argmin_{x\in \mathcal{M}}f(x):=\sum_{i=1}^Np_i f_{i}(x):=\sum_{i=1}^N p_i \left( \sum_{j=1}^{N_i}\frac{1}{N_i}f_{i,j}(x)\right),\; p_i=\frac{N_i}{\sum_{i=1}^NN_i}
\end{align}
while guaranteeing the privacy of data, where the parameter $x$ (sometimes referred to as ``model'') is in a finite-dimensional Riemannian manifold $\mathcal{M}$, $N$ is the number of agents used to solve this problem, $N_i$ is the number of data items stored in the agent $i$, and $f_{i, j}(x)$ is an empirical risk function for the $j$-th datum in agent $i$. Such problems appear in various machine learning problems, see e.g., \cite{AZ05,CL17,KJM19} and references therein.


Conventional machine learning methods typically necessitate the uploading of all data to central servers for the purpose of learning a model of interest. However, the sensitivity of agents' data entails inherent risks and responsibilities associated with storing such data on central servers.
In light of these considerations, a privacy-preserving distributed learning approach known as Federated Learning (FL)~\cite{KMRR16, KMY17, MMRyA23} has been proposed and has garnered growing attention since its inception.
In the FL system, sensitive data is stored individually by agents. The server's sole responsibility is to aggregate locally trained parameters uploaded by active agents and subsequently broadcast the aggregated parameter to all agents. After receiving the aggregated global parameter, active agents autonomously update it and upload their revised versions to the server. A notable strength of FL lies in its ability to conduct agent-side local training without the need for sensitive data exchange between the server and agents. Additionally, FL can efficiently utilize the computational capabilities of individual agents.

Nevertheless, an adversary can still infer sensitive information from certain agents by manipulating the input datasets and probing the output of algorithm~\cite{SS15,WSZ18}. This underscores the pressing need for a FL framework that offers enhanced privacy protection for the agents.
A number of works have explored this challenging problem from different perspectives, e.g., $k$-anonymity~\cite{SS98}, secure multiparty computing~\cite{Can00}, homomorphic encryption~\cite{Gen09}, and differential privacy~\cite{DMNS06,KLNRS11,DR14}. 
Differential Privacy (DP) offers a rigorous framework for addressing data privacy by precisely quantifying the deviation in the output distribution of a mechanism when a small number of input datasets are modified. It has become a \textit{de facto} standard for safeguarding agents' privacy in machine learning, as evidenced by works such as ~\cite{CMS11,ACG16,WYX17,HMJG22,UHJM23}. Consequently, our work centers on the utilization of DP-based techniques to enhance the privacy of FL.

\subsection{Related work}

The initial approach within the FL framework is FedAvg~\cite{MMRA16} (updated in 2023~\cite{MMRyA23}), utilizing stochastic gradient descent (SGD) for agents, and it has demonstrated empirical success in practical applications. Subsequently, McMahan et al.~\cite{MRTZ17} integrated two FL algorithms, namely FedSGD and FedAvg, with DP, denoted as DP-FedSGD and DP-FedAvg, respectively. 
DP-FedSGD is a direct extension of non-federated DP-SGD~\cite{ACG16} into the federated learning framework. On the other hand, DP-FedAvg exploits DP-SGD for local training. 
A key notion in the definition of DP is the definition of adjacent datasets which often depends on the problems. 
Both methods above focus on user-level privacy, where the concept of adjacent datasets is defined as follows: two datasets $D$ and $D'$ are adjacent if $D'$ can be derived by entirely changing the data of a single agent from $D$. Many subsequent works act on user-level, e.g.,~\cite{GKN17,LZLCL22}.
Recent studies~\cite{WLD20,WLDMSZP22} have also introduced privacy-preserving FL frameworks based on record-level privacy. In this setting, two datasets $D$ and $D'$ are adjacent, if $D'$ can be derived by changing a record from $D$. In this paper, we focus on the DP based on record-level privacy.

\paragraph{In Euclidean space.} The most common way to ensure DP of an algorithm is to add noise. At each outer iteration, DP-FedSGD and DP-FedAvg~\cite{MRTZ17} add noise into the global parameter before broadcasting it to agents. Therefore, the server has the exact iterates without noise.
However, if the server is not trusted, this framework will make agents' sensitive data easily leaked by the server. 
To this end,
in the study~\cite{WLD20}, Wei et al. proposed an algorithm referred to as NbAFL. This algorithm involves the incorporation of noise in two ways: i) agents inject noise into local parameters before uploading them to the server, and ii) the server injects noise into the global parameter before broadcasting it to agents.
The algorithm proposed by~\cite{WLDMSZP22} performs one-step gradient descent for all selected agents and then these agents inject noise into the revised parameters before sending them to the server.
Furthermore, in the research conducted by~\cite{ZCLS21}, the authors also followed this idea and proposed PriFedSync, where the local training is a procedure preserving privacy based on $f$-DP~\cite{DRS20}, a different variant of DP.


The majority of FL algorithms operate in Euclidean space, e.g.,~\cite{KMRR16,LHYWZ19,LSZ20,KKM20,MJPH21,MMRyA23}. In recent years, the parameters of many practical problems are located in Riemannian manifolds, which are non-flat spaces without linearity in most cases. This prevents traditional FL frameworks from directly addressing these problems and few studies have been carried out on this issue.
Recently, Li and Ma in~\cite{LM22} introduced an FL algorithm on Riemannian manifolds, named RFedSVRG. This algorithm integrates the stochastic variance reduced gradient (SVRG) technique within the framework of Riemannian FL and creatively introduces an aggregation suitable for manifolds. 
In essence, RFedSVRG directly expands the RSVRG algorithm~\cite{ZRS17} into FL settings.

\paragraph{On manifolds.} On the other hand,
as differential privacy is extended to any measurable space~\cite{WZ08,AKRS19}, which includes a Riemannian manifold equipped with the Borel $\sigma$-algebra, many machine learning algorithms working on Riemannian manifolds are integrated with DP. Reimherr et al.~\cite{RBS21} is the first group that brought $\epsilon$-DP into Riemannian settings and proposed the Riemannian Laplace mechanism based on the distribution stemmed from~\cite{HISBR16}. Afterwards, a $K$-norm gradient mechanism for $\epsilon$-DP was proposed by Soto et al.~\cite{SBRS22} on Riemannian manifolds. These two mechanisms mentioned above were focused on privatizing Fr\'{e}chet mean and the latter demonstrated that better utility can be achieved compared to the former. Utpala et al.\cite{UVM23} developed DP and expanded its Gaussian mechanism to only one manifold, the symmetric positive definite matrices with the $\mathrm{Log}$-$\mathrm{Euclidean}$ metric, based on the distribution from~\cite{Sch16}. 
Han et al.~\cite{HMJG22} introduced a tangent Gaussian distribution on tangent spaces of a Riemannian manifold, which is a generalization of standard Gaussian distribution defined on Euclidean spaces. Based on tangent Gaussian distribution, the Gaussian mechanism commonly used in Euclidean spaces can be naturally generalized to tangent spaces. In the same work, the Riemannian SGD ensuring DP (DP-RSGD) was proposed. Subsequently, Utpala et al.~\cite{UHJM23} integrated SVRG with DP (DP-RSVRG) using the Riemannian Gaussian mechanism~\cite{HMJG22} on Riemannian manifolds. Recently, Jiang et al.~\cite{JCLDKJ23} expanded Gaussian Differential privacy~\cite{DRS20} to Riemannian manifolds based on the distribution from~\cite{PSF19}. In these works, the mechanisms in~\cite{RBS21,SBRS22,UVM23,JCLDKJ23} work on the manifold itself and are suitable for output perturbation. On the other hand, the ones in~\cite{HMJG22,UHJM23} work in tangent spaces and are suitable for gradient perturbation.

To the best of our knowledge, the problem of privatizing FL framework on Riemannian manifolds is unexplored. 
It is worth noting that in~\cite{LM22}, the authors aimed to propose an FL algorithm on Riemannian manifolds, however, DP is not considered. Whereas, to enhance privacy protection rigorously, in this work, we make an effort to privatize an FL algorithm on Riemannian manifolds, and propose a novel FL framework integrated with DP defined on Riemannian manifolds (PriRFed), which exploits the Riemannian Gaussian mechanism~\cite{HMJG22}.


\subsection{Our work and main contributions}

{In this paper, we propose a generic FL framework on Riemannian manifolds while preserving privacy based on DP technique, named PriRFed; refer to Algorithm~\ref{alg:PriRFed}. 
The motivation for developing this algorithm is that in FL, sensitive information from certain agents can still be inferred by a malicious adversary by manipulating the input datasets and probing the output of the algorithm, 
and that there exist many real-world applications, such as principal eigenvector computation over sphere manifolds and computing averaging points in symmetric positive matrices manifolds, where the parameters are located in the non-flat spaces: Riemannian manifolds.
In such cases, classical (Euclidean) FL methods are difficult to be competent effectively. For the former, we require that the active agents privately locally train the local parameters by a DP-based algorithm before sending them to the server. Since the uploaded local parameters obey DP by themselves, this avoids the necessity of the trusted server assumption.
For the latter, we adopt Riemannian optimization techniques and generalize FL setting to Riemannian manifolds. 

The main contributions in this paper are summarized as follows:
}{}

\begin{itemize}
	\item 
	We propose a generic Riemannian FL framework named PriRFed, designed to safeguard data privacy from two perspectives: locality and globality. From the viewpoint of locality, agents execute private local training to attain local DP, effectively eliminating the need for a trusted server assumption. Furthermore, it has been theoretically demonstrated that the final parameter learned by this FL framework ensures DP. This is from the perspective of globality.
	\item 
	We analyze the convergence properties of the proposed PriRFed framework, considering the privately local training procedures specified as DP-RSGD~\cite{HMJG22} and DP-RSVRG~\cite{UHJM23}. To the best of our knowledge, this marks the first attempt to address the FL setting on Riemannian manifolds while ensuring DP and simultaneously analyzing the convergence properties.
	\item 
	We conduct extensive numerical experiments on synthetic and real-world datasets to evaluate the performance of the proposed PriRFed in solving problems such as principal eigenvector computation, Fr\'{e}chet mean computation, and hyperbolic structured prediction. The reported results demonstrate the consistency between our theoretical findings and the experimental outcomes.
\end{itemize}


\subsection{Outline}
The remainder of this paper is outlined as follows: Section~\ref{sec:prel} introduces preliminaries related to Riemannian geometry, federated learning, and differential privacy.  Section~\ref{sec:PriRFed} discusses a novel Riemannian federated learning framework designed to obey differential privacy. The privacy guarantee of the proposed FL framework on Riemannian manifolds is detailed in Section~\ref{sec:privacy}. 
In Section~\ref{sec:conv}, we analyze the convergence of PriRFed with specific local training procedures~\cite{HMJG22,UHJM23}.
Section~\ref{sec:Num} presents the results of our numerical experiments and Section~\ref{sec:concl} concludes this work.

\section{Preliminaries} \label{sec:prel}
Throughout this paper, we use $\mathbb{R}$ and $\mathbb{R}^{m\times n}$ to represent the real numbers and the set of real matrices of size $m\times n$, respectively. $\|A\|_F$ denotes the $F$-norm of a matrix $A\in \mathbb{R}^{m\times n}$ and $[n]$ refers to the set $\{1,2,\dots,n\}$. For a set $\mathcal{S}$ having finite entries, $|\mathcal{S}|$ is its cardinality. We use $D_1\oplus D_2$ to denote the symmetric difference of two sets $D_1$ and $D_2$.
The logarithm with base $e$ and the exponential function with base $e$ are denoted by $\log(\cdot)$ and $\exp(\cdot)$, respectively. Additionally, we use $\mathbb{E}[\cdot]$ to represent the expectation. 

\paragraph{Riemannian geometry.} The Riemannian geometry can be found in the standard textbook, e.g.,~\cite{Boo75,Lee97}, and the notation follows~\cite{AMS08}. Denote by $\mathcal{M}$ a manifold. For any $x\in\mathcal{M}$, $\mathrm{T}_x\mathcal{M}$ is a tangent space of $\mathcal{M}$ at $x$ and the elements in $\mathrm{T}_x\mathcal{M}$ are called tangent vectors of $\mathcal{M}$ at $x$. In this paper we only consider $d$ dimensional manifold, implying that for any $x\in\mathcal{M}$, the tangent space $\mathrm{T}_x\mathcal{M}$ is $d$ dimensional linear space. The union of all tangent spaces, denoted by $\mathrm{T}\mathcal{M}$, is called the tangent bundle of $\mathcal{M}$. A Riemannian manifold $(\mathcal{M},\left<\cdot,\cdot\right>)$ is a smooth manifold when endowed with a Riemannian metric $\left<\cdot,\cdot\right>$, which is defined on the Whitney sum of two tangent bundles, namely $\mathrm{T}\mathcal{M}\oplus \mathrm{T}\mathcal{M}=\{(v,u):x\in\mathcal{M},v,u\in\mathrm{T}_x\mathcal{M}\}$. For each $x\in\mathcal{M}$ when restricted on $\mathrm{T}_x\mathcal{M}$, the metric $\left<\cdot,\cdot\right>$ induces an inner product $\left<\cdot,\cdot\right>_x=\left<\cdot,\cdot\right>\big|_{\mathrm{T}_x\mathcal{M}}$; and the norm of $v\in\mathrm{T}_x\mathcal{M}$ induced by $\left<\cdot,\cdot\right>$ is defined as $\|v\|_x=\sqrt{\left<v,v\right>_x}$. 
Given an orthonormal basis $\mathscr{B}_x=(\mathfrak{b}_{1}^x,\mathfrak{b}_2^x,\dots,\mathfrak{b}_d^x)$ for $\mathrm{T}_x\mathcal{M}$, and a tangent vector $u\in \mathrm{T}_x \mathcal{M}$, there exists unique coefficient vector $\vec{u}\in \mathbb{ R}^d$ such that $u=\mathscr{B}_x\vec{u}=\sum_{i=1}^d\vec{u}_i \mathfrak{b}_i^x$. Under this basis $\mathscr{B}_x$, the inner product can be identified by $\left<u,v\right>=\vec{u}^T\vec{v}$, where $\vec{u},\vec{v}\in\mathbb{R}^d$ are the coefficients of tangent vectors $u,v$ in the orthonormal coordinate system. For a smoothly real-valued function $f:\mathcal{M}\rightarrow\mathbb{R}$, the tangent vector $\mathrm{grad}f(x)$ denotes the Riemannian gradient of $f$ at $x$, which is defined by the unique tangent vector satisfying $\left<\mathrm{grad}f(x),v\right>_x=\mathrm{D}f(x)[v]$ for any $v\in\mathrm{T}_x\mathcal{M}$.

Let $\gamma:\mathcal{I}\rightarrow\mathcal{M}$ denote a smooth curve and $\dot{\gamma}(t)\in\mathrm{T}_{\gamma(t)}\mathcal{M}$ denote its velocity at $t$, where $\mathcal{I}\subset \mathbb{R}$ is an open interval containing $[0,1]$. The distance between $x=\gamma(0)$ and $y=\gamma(1)$ is defined by $\mathrm{dist}(x,y)=\inf_{\{\gamma:\gamma(0)=x,\gamma(1)=y\}}\int_{0}^{1}\|\dot{\gamma}(t)\|_{\gamma(t)}\mathrm{d}t$. A smooth curve $\gamma:[0,1]\rightarrow\mathcal{M}$ is called the geodesic if it locally minimizes the distance between $x=\gamma(0)$ and $y=\gamma(1)$. For any $v\in\mathrm{T}_x\mathcal{M}$, the exponential map is defined as $\mathrm{Exp}_{x}(v)=\gamma(1)$ with $\gamma(0)=x$ and $\dot{\gamma}(0)=v$. For two points $x,y\in\mathcal{M}$, if there exists a unique geodesic connecting them, then the exponential map has a smooth inverse and the Riemannian distance is given by $\mathrm{dist}(x,y)=\|\mathrm{Exp}_{x}^{-1}(y)\|_x=\|\mathrm{Exp}_y^{-1}(x)\|_y$. Given two points $x,y\in\mathcal{M}$, 
the parallel translation\footnote{Strictly speaking, the notion of parallel translation depends on the choice of a curve (not necessarily geodesic) connecting $x$ to $y$ and the corresponding parallel vector field. In this paper, we particularly specify the curve as minimizing geodesic, which is no problem for a complete Riemannian manifold. One may refer to~\cite{AMS08,Bou23} for more details.} from $x$ to $y$ transports a tangent vector on $\mathrm{T}_x\mathcal{M}$ into $\mathrm{T}_x \mathcal{M}$, denoted by $\Gamma_x^{y}:\mathrm{T}_x\mathcal{M} \rightarrow \mathrm{T}_y\mathcal{M}$, along the unique minimizing geodesic connecting $x$ to $y$ while preserving the inner product along a geodesic in the sense that for any $u,v\in\mathrm{T}_x\mathcal{M}$, it holds that $\left<\Gamma_x^yu,\Gamma_x^yv\right>_y=\left<u,v\right>_x$. Moreover, parallel translation is a linear map.
A neighborhood $\mathcal{W}\subset\mathcal{M}$ is totally normal if, for any of its two points, the exponential map is invertible. We assume throughout this paper that all the iterates are in a totally normal neighborhood.

\paragraph{Function classes.} Here we review some function classes satisfying desired conditions for analysis.

\begin{definition}[Geodesic Lipschitz continuity] \label{def:geodLip}
A function $f:\mathcal{M}\rightarrow\mathbb{R}$ is called geodesically $L_f$-Lipschitz continuous if for any $x,y\in\mathcal{M}$, there exists $L_f\ge0$ such that
\[
	|f(x)-f(y)| \le L_f\mathrm{dist}(x,y).
\]
\end{definition}
Under the assumption of continuous gradient, a function $f:\mathcal{M}\rightarrow \mathbb{R}$ is geodesically $L_f$-Lipschitz continuous if and only if~\cite{Bou23}
\[
	\|\mathrm{grad}f(x)\|_x \le L_f, \hbox{ for all }x\in\mathcal{M}.
\]

\begin{definition}[Geodesic smoothness] \label{def:geodsmooth}
	A differentiable function $f:\mathcal{M}\rightarrow\mathbb{R}$ is call geodesically $L_g$-smooth if its gradient is $L_g$-Lipschitz continuous, that is,
	\[
		\|\mathrm{grad}f(x) - \Gamma_y^x(\mathrm{grad}f(y))\|_x \le L_g \mathrm{dist}(x,y).
	\]
\end{definition}
It can be shown that if $f$ is geodesically $L_g$-smooth, then we have
	\[
		f(y) \le f(x) + \left<\mathrm{grad}f(x), \mathrm{Exp}_{x}^{-1}(y)\right>_y + \frac{L_g}{2}\mathrm{dist}^2(x,y)
	\]
	for all $x,y\in\mathcal{M}$~\cite{ZS16}.

\begin{definition}[Geodesic convexity~\cite{ZS16}] \label{def:geodesic convex}
	A function $\mathcal{M} \rightarrow \mathbb{R}$ is called geodesically convex if for any $x,y\in \mathcal{M}$, a geodesic $\gamma$ satisfying $\gamma(0)=x$ and $\gamma(1)=y$, and $t\in[0,1]$, it holds that 
	\[
		f(\gamma(t))\le (1-t)f(x) + tf(y).
	\]
\end{definition}
It has been shown that for any $x,y\in \mathcal{M}$, $f: \mathcal{M} \rightarrow \mathbb{R}$ is geodesically convex if and only if
\[
	f(y) \ge f(x) + \left<{g}_x, \mathrm{Exp}_x^{-1}(y)\right>_x,
\]
where ${g}_x$ is a Riemannian subgradient of $f$ at $x$ or the Riemannian gradient if $f$ is differentiable~\cite{ZS16}.

\paragraph{Federated learning.}
This paper considers an FL system where one server and $N$ agents collaboratively fit a model of interest working on a Riemannian manifold. Let $D=\{z_1,z_2,\dots,z_n\}$ be a dataset, where data point $z_i$ is sampled from an input data space $\mathcal{Z}$, i.e., $D\in \mathcal{Z}^n$. In the FL system, each agent $i$ has a local dataset $D_i=\{z_{i,1},z_{i,2},\dots,z_{i,N_i}\}\in \mathcal{Z}^{N_i}$ of size $N_i=|D_i|$. Formally, the FL system aims to learn a parameter $x\in \mathcal{M}$ by collaboratively optimizing the following problem: 
\begin{equation} \label{prob}
	\begin{aligned}
		\min_{x\in \mathcal{M}}f(x)&:=\sum_{i=1}^Np_if(x,D_i)  
		:=\sum_{i=1}^Np_i\left(\frac{1}{N_i}\sum_{j=1}^{N_i}f(x,z_{i,j})\right),
	\end{aligned}
\end{equation}
where $\mathcal{M}$ is a $d$-dimensional Riemannian manifold under consideration, $p_i=N_i/(\sum_{i=1}^NN_i)$, and $\frac{1}{N_i}\sum_{j=1}^{N_i}f(x,z_{i,j})$ is the local objective on the agent $i$ holding the local dataset $D_i$. For convenience, in what follows, we use $f_i(x)$ and $f_{i,j}(x)$ to denote $f(x,D_i)$ and $f(x, z_{i,j})$ respectively. 
The classic FedAvg~\cite{MMRyA23} (i.e., $\mathcal{M}=\mathbb{R}^n$) mainly follows the steps below:

\begin{itemize}
	\item Initialization: the server randomly samples $s$ agents, denoted by $\mathcal{S}_t$, and broadcasts an initial guess $x^{(0)}$ to these agents.
	\item Local training: at the $t$-th iteration, each sampled agent $i \in \mathcal{S}_t$ locally performs stochastic gradient descent $K$ times and sends the updated $x_i^{(t+1)}$ to the server. 
	\item Server aggregation: the server aggregates the received $x_i^{(t)}$ by weighted average, 
\begin{align} \label{prel:weight-ave}
x^{(t+1)}=\sum_{i\in \mathcal{S}_t}\frac{N_i}{\sum_{i\in \mathcal{S}_t}N_i}x_i^{(t+1)},
\end{align}
	and then broadcasts the global parameter $x^{(t+1)}$ to all agents.
\end{itemize}
Nevertheless, it is not suitable for Problem~\eqref{prob} because of the difficulty of aggregating points in a manifold, a non-linear space in most cases. 
An efficient aggregation on Riemannian manifolds will be discussed in Section~\ref{sec:PriRFed}.

\paragraph{Differential privacy.} 
Differential privacy has been extended to encompass any measurable space~\cite{WZ08,AKRS19}, including the incorporation of a Riemannian manifold equipped with the Borel $\sigma$-algebra. Consequently, defining differential privacy over a Riemannian manifold is a natural extension. Two datasets, $D$ and $D'$, belonging to the space $\mathcal{Z}^n$, are adjacent, denoted by $D\sim D'$, if they differ by at most one point. Therefore, $D \sim D'$ if and only if $|D\oplus D'|\le 2$. With the definition of adjacency of two datasets, differential privacy is defined in Definition~\ref{def:DP}. 

\begin{definition}[Differential privacy~\cite{DR14}] \label{def:DP}
Let $\mathcal{A}$ be a manifold-valued randomized mechanism, namely, $\mathcal{A}:\mathcal{Z}^n \rightarrow \mathcal{M}$. We say that $\mathcal{A}$ is $(\epsilon,\delta)$-differentially private ($(\epsilon,\delta)$-DP for short) if for any two adjacent datasets $D,D'$ and for all measurable sets $\mathcal{S}\subseteq\mathcal{M}$, it holds that
\[
	\mathrm{P}\{\mathcal{A}(D)\in \mathcal{S}\}\le \exp(\epsilon)\mathrm{P}\{\mathcal{A}(D')\in \mathcal{S}\}+\delta.
\]
The parameter $\epsilon$ called privacy budget is the distinguishable bound of all outputs of $\mathcal{A}$ on adjacent datasets $D$ and $D'$ in the database $\mathcal{Z}^n$, and $\delta$ means the probability of failure of differential privacy.
\end{definition}


Theorem~\ref{th:post-processing} states that post-processing data under differential privacy protection still guarantees at least the same level of differential privacy.

\begin{theorem}[{Post-processing} {\cite[Proposition 2.1]{DR14}}]   \label{th:post-processing}
	Suppose that $\mathcal{A}:\mathcal{Z}^n \rightarrow \mathcal{M}$ is a randomized algorithm that is $(\epsilon,\delta)$-DP. Let $P:\mathcal{M}\rightarrow\mathcal{M}'$ be an arbitrary mapping. Then $P\circ \mathcal{A}:\mathcal{Z}^n \rightarrow\mathcal{M}'$ is $(\epsilon,\delta)$-DP.
\end{theorem}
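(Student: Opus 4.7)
The plan is to reduce the post-processing inequality on $\mathcal{M}'$ to the original DP inequality for $\mathcal{A}$ on $\mathcal{M}$ by pulling back measurable sets through $P$. First I would fix two arbitrary adjacent datasets $D,D' \in \mathcal{Z}^n$ with $|D \oplus D'| \le 2$ and an arbitrary measurable set $\mathcal{S}' \subseteq \mathcal{M}'$. The key observation is that for any outcome $y \in \mathcal{M}$, we have $(P \circ \mathcal{A})(D) \in \mathcal{S}'$ if and only if $\mathcal{A}(D) \in P^{-1}(\mathcal{S}')$, and the analogous equivalence holds for $D'$.

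Next I would handle measurability: interpreting ``arbitrary mapping'' as an arbitrary measurable function (as is standard in \cite{DR14}, since otherwise $(P \circ \mathcal{A})$ need not be a randomized mechanism in the measurable sense), the preimage $\mathcal{T} := P^{-1}(\mathcal{S}') \subseteq \mathcal{M}$ is measurable with respect to the Borel $\sigma$-algebra on $\mathcal{M}$. Therefore
\[
	\mathrm{P}\{(P \circ \mathcal{A})(D) \in \mathcal{S}'\} = \mathrm{P}\{\mathcal{A}(D) \in \mathcal{T}\}
\]
and likewise for $D'$. Invoking the $(\epsilon,\delta)$-DP hypothesis of $\mathcal{A}$ on the pair $(D,D')$ and the measurable set $\mathcal{T}$ gives
\[
	\mathrm{P}\{\mathcal{A}(D) \in \mathcal{T}\} \le \exp(\epsilon)\,\mathrm{P}\{\mathcal{A}(D') \in \mathcal{T}\} + \delta,
\]
which upon substituting the equivalences above yields the desired bound for $P \circ \mathcal{A}$.

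The main subtle point, rather than the algebraic manipulation, is the treatment of a possibly randomized post-processing map $P$. If one wishes to allow $P$ to be randomized, the cleanest route is to condition on the internal randomness $r$ of $P$, apply the deterministic argument above to $P(\cdot;r)$ for each fixed $r$ (so that $P(\cdot;r)^{-1}(\mathcal{S}')$ is measurable), and then integrate the inequality over the distribution of $r$ using the linearity of expectation; since the bound $\exp(\epsilon)\mathrm{P}\{\cdot\} + \delta$ is affine in the probability, the inequality is preserved. I expect no further obstacles, as independence of $r$ from the dataset ensures Fubini applies and the constants $\epsilon,\delta$ do not inflate.
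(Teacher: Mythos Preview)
Your argument is correct and is exactly the standard preimage proof of post-processing from \cite[Proposition~2.1]{DR14}. Note that the paper itself does not prove this theorem; it simply cites it from \cite{DR14}, so there is nothing further to compare.
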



The concept of composability stands as a fundamental property within the realm of differential privacy. This property empowers the modular design of intricate private mechanisms from simpler components. Theorems~\ref{th:composition} and~\ref{th:adv composition} formally establish that the composition of several simpler private mechanisms remains differentially private, with the added assurance that its privacy guarantee does not significantly degrade.

\begin{theorem}[Sequential composition theorem {\cite[Theorem 3.16]{DR14}}]\label{th:composition}
	Suppose that $\mathcal{A}_i:\mathcal{Z}^n \rightarrow\mathcal{M}_i$ is an $(\epsilon_i,\delta_i)$-DP algorithm for $i\in[k]$. Then if $\mathcal{A}_{[k]}:\mathcal{Z}^n \rightarrow \prod_{i=1}^k\mathcal{M}_i$ is defined to be $\mathcal{A}_{[k]}:=(\mathcal{A}_1,\mathcal{A}_2,\dots,\mathcal{A}_k)$, then $\mathcal{A}_{[k]}$ is $(\sum_{i=1}^k\epsilon_i,\sum_{i=1}^k\delta_i)$-DP.
\end{theorem}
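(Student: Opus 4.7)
The plan is to proceed by induction on $k$. The base case $k=1$ is immediate from the assumption that $\mathcal{A}_1$ is $(\epsilon_1,\delta_1)$-DP. For the inductive step, suppose the result holds for any $k-1$ mechanisms. Fix a pair of adjacent datasets $D\sim D'$ and a measurable set $\mathcal{S}\subseteq \prod_{i=1}^{k}\mathcal{M}_i$. Since the mechanisms draw independent randomness, Fubini's theorem yields the decomposition
\begin{equation*}
\mathrm{P}\{\mathcal{A}_{[k]}(D)\in\mathcal{S}\} \;=\; \int_{\mathcal{M}_k}\mathrm{P}\{\mathcal{A}_{[k-1]}(D)\in\mathcal{S}_y\}\,d\mathrm{P}_{\mathcal{A}_k(D)}(y),
\end{equation*}
where $\mathcal{S}_y := \{z\in\prod_{i<k}\mathcal{M}_i : (z,y)\in\mathcal{S}\}$ denotes the $y$-slice of $\mathcal{S}$.

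Next I would apply the inductive hypothesis slice-by-slice: for every fixed $y$, the slice probability satisfies $\mathrm{P}\{\mathcal{A}_{[k-1]}(D)\in\mathcal{S}_y\}\le e^{\sum_{i<k}\epsilon_i}\mathrm{P}\{\mathcal{A}_{[k-1]}(D')\in\mathcal{S}_y\}+\sum_{i<k}\delta_i$. Integrating this inequality against $d\mathrm{P}_{\mathcal{A}_k(D)}(y)$ reduces the whole question to controlling the expectation $\mathbb{E}_{y\sim \mathcal{A}_k(D)}[g(y)]$ for the $[0,1]$-valued measurable function $g(y):=\mathrm{P}\{\mathcal{A}_{[k-1]}(D')\in\mathcal{S}_y\}$.

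The last ingredient is an ``expectation-form'' of DP: if $\mathcal{A}$ is $(\epsilon,\delta)$-DP and $g:\mathcal{M}\to[0,1]$ is measurable, then $\mathbb{E}[g(\mathcal{A}(D))]\le e^{\epsilon}\mathbb{E}[g(\mathcal{A}(D'))]+\delta$. I would establish this via the layer-cake identity $\mathbb{E}[g(X)]=\int_{0}^{1}\mathrm{P}\{g(X)\ge t\}\,dt$ and apply the DP bound for $\mathcal{A}$ to each superlevel set $\{y:g(y)\ge t\}$. Applied with $\mathcal{A}=\mathcal{A}_k$ this converts $\mathbb{E}_{y\sim\mathcal{A}_k(D)}[g(y)]$ into $e^{\epsilon_k}\mathbb{E}_{y\sim\mathcal{A}_k(D')}[g(y)]+\delta_k$. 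Assembling the exponential and additive constants (using $e^{a}e^{b}=e^{a+b}$) delivers the advertised $(\sum_{i=1}^k\epsilon_i,\sum_{i=1}^k\delta_i)$-DP guarantee for $\mathcal{A}_{[k]}$.

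The one genuinely technical obstacle is measurability: one must verify that the map $y\mapsto \mathrm{P}\{\mathcal{A}_{[k-1]}(D)\in\mathcal{S}_y\}$ is Borel measurable so that Fubini and the layer-cake argument are legitimate. This is handled by equipping each $\mathcal{M}_i$ with its Borel $\sigma$-algebra (as in the paragraph preceding Definition~\ref{def:DP}) and noting that every finite-dimensional Riemannian manifold is a Polish space, so the product $\prod_i\mathcal{M}_i$ carries the standard product Borel structure and the usual measurable-section machinery applies. Everything else in the argument is bookkeeping with non-negative integrands, so no convexity or curvature property of the manifolds $\mathcal{M}_i$ is needed.
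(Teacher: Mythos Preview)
The paper does not supply its own proof of this statement; Theorem~\ref{th:composition} is quoted verbatim from \cite[Theorem~3.16]{DR14}, so there is no in-paper argument to compare against.

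That said, your proposed argument has a genuine gap in the bookkeeping of the additive $\delta$-terms. After you integrate the inductive bound against $d\mathrm{P}_{\mathcal{A}_k(D)}$ and then invoke the expectation form of DP for $\mathcal{A}_k$, the resulting slack is
\[
e^{\sum_{i<k}\epsilon_i}\,\delta_k \;+\; \sum_{i<k}\delta_i,
\]
not $\sum_{i\le k}\delta_i$; the factor $e^{\sum_{i<k}\epsilon_i}$ in front of $\delta_k$ does not disappear. Reversing the order in which you swap $D\to D'$ for $\mathcal{A}_k$ versus $\mathcal{A}_{[k-1]}$ only moves the exponential prefactor onto a different $\delta$. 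Hence the straightforward induction you outline yields merely $\bigl(\sum_i\epsilon_i,\ \sum_i e^{\sum_{j<i}\epsilon_j}\delta_i\bigr)$-DP, strictly weaker than the claim whenever some $\epsilon_i>0$. The standard repair (and what \cite{DR14} does in its appendix) is to decompose each law $\mathrm{P}_{\mathcal{A}_i(D)}$ into a ``good'' sub-probability measure that is pointwise $\le e^{\epsilon_i}\,\mathrm{P}_{\mathcal{A}_i(D')}$ plus a ``bad'' remainder of total mass $\le\delta_i$ (take Radon--Nikodym derivatives with respect to $\mathrm{P}_{\mathcal{A}_i(D)}+\mathrm{P}_{\mathcal{A}_i(D')}$ and set the good density to the pointwise minimum). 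Expanding the product measure as $P_1^{\mathrm{good}}\otimes P_2^{\mathrm{good}}+P_1^{\mathrm{good}}\otimes P_2^{\mathrm{bad}}+P_1^{\mathrm{bad}}\otimes P_2$ and bounding the last two pieces by their total masses gives exactly $\delta_1+\delta_2$ with no exponential prefactor; induction then finishes. Your Fubini/layer-cake and measurability remarks are correct and carry over unchanged to this corrected argument.
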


It is noteworthy that from Theorem~\ref{th:composition}, considering a series of mechanisms $\{\mathcal{A}_t\}_{t=1}^k$ which all obey $(\epsilon,\delta)$-DP, the sequential composition $\mathcal{A}_{[k]}$, defined by Theorem~\ref{th:composition}, is $(k\epsilon,k\delta)$-DP. Therefore, the privacy budget of $\mathcal{A}_{[k]}$ increases linearly with respect to the number $k$ of compositions. This claim also holds for adaptive composition of $\{\mathcal{A}\}_{t=1}^{k}$, defined by $\mathcal{A}_{[k]}=\mathcal{A}_{k}\circ \mathcal{A}_{k-1}\circ \cdots \circ \mathcal{A}_{1}$, where $\mathcal{A}_{j}$ takes the dataset and the outputs of $\mathcal{A}_{j-1},\dots,\mathcal{A}_{1}$ as input; see, e.g.,~\cite{GLW21}~\cite[Chapter~7.2]{Vad17}. 
Whereas, if it is allowed to relax the parameter $\delta$, then the privacy budget under adaptive composition only increases sublinearly, as seen in Theorem~\ref{th:adv composition}.

\begin{theorem}[Advanced composition theorem {~\cite{DRV10,KOV17,WRRW23}}] \label{th:adv composition} 
Suppose that $\mathcal{A}_i: \mathcal{Z}^n \rightarrow \mathcal{M}$ is an $(\epsilon,\delta)$-DP algorithm for $i\in[k]$. Then the adaptive composition $\mathcal{A}_{[k]}$ of $\mathcal{A}_{1},\dots,\mathcal{A}_k$ is $(\epsilon',\delta'+k\delta)$-DP with $\epsilon'=\sqrt{2k\log(1/\delta')}\epsilon+k\epsilon(\mathrm{exp}(\epsilon)-1)$ and any $\delta'>0$.
\end{theorem}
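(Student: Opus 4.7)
The plan is to follow the classical Dwork--Rothblum--Vadhan / Kairouz--Oh--Viswanath approach via the privacy loss random variable. The argument is purely measure-theoretic and uses nothing about the codomain beyond a $\sigma$-algebra, so it extends verbatim to a Riemannian manifold equipped with its Borel $\sigma$-algebra.

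First, for each $(\epsilon,\delta)$-DP mechanism $\mathcal{A}_i$, adjacent inputs $D\sim D'$, and prior outputs $y_{1:i-1}$, I would introduce the \emph{privacy loss random variable}
\[
L_i=\log\frac{p_i(Y_i\mid D, y_{1:i-1})}{p_i(Y_i\mid D', y_{1:i-1})},
\]
where $Y_i\sim\mathcal{A}_i(D\mid y_{1:i-1})$ and $p_i$ is a density with respect to a common $\sigma$-finite dominating measure on $\mathcal{M}$, which exists since $\mathcal{M}$ is second-countable. The key preliminary lemma to establish is that $(\epsilon,\delta)$-DP of $\mathcal{A}_i$ yields a measurable event $E_i$ of probability at least $1-\delta$ on which $|L_i|\le\epsilon$, together with $\mathbb{E}[L_i\mathbf{1}_{E_i}]\le\epsilon(e^\epsilon-1)$. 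The second inequality follows by writing $\mathbb{E}[L_i\mathbf{1}_{E_i}]\le\mathbb{E}[(e^{L_i}-1)\mathbf{1}_{E_i}]$ and using $e^\epsilon-1\ge L_i$ on $E_i$ jointly with symmetry between $D$ and $D'$.

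For the adaptive composition, let $E=\bigcap_i E_i$; a union bound gives $\mathrm{P}(E^c)\le k\delta$. On $E$, the total privacy loss $L=\sum_{i=1}^k L_i$ decomposes as
\[
L=\sum_{i=1}^k\mathbb{E}[L_i\mid y_{1:i-1}]+\sum_{i=1}^k\bigl(L_i-\mathbb{E}[L_i\mid y_{1:i-1}]\bigr),
\]
whose first term is bounded by $k\epsilon(e^\epsilon-1)$ and whose second term is a bounded-difference martingale with respect to the natural filtration of outputs, each increment supported in an interval of length at most $2\epsilon$. Azuma--Hoeffding then yields
\[
\mathrm{P}\Bigl\{L>k\epsilon(e^\epsilon-1)+\sqrt{2k\log(1/\delta')}\,\epsilon\,\Bigm|\,E\Bigr\}\le\delta'.
\]
The standard duality between a bounded privacy loss with exception probability $\delta'$ and $(\epsilon',\delta')$-DP then converts this tail bound into an $(\epsilon',\delta')$-DP guarantee for $\mathcal{A}_{[k]}$ conditional on $E$; combining with the $k\delta$ union-bound failure from $E^c$ gives the claimed $(\epsilon',\delta'+k\delta)$-DP.

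The main obstacle will be the preliminary lemma controlling simultaneously the tail and the conditional mean of $L_i$: starting from only a flat $(\epsilon,\delta)$-DP hypothesis, one must extract the correct truncation event $E_i$, verify that the expectation bound $\epsilon(e^\epsilon-1)$ really holds on the restricted event rather than unconditionally, and check that the truncation is compatible between the distributions induced by $D$ and by $D'$ so that both the martingale property and the union bound on the $E_i$'s are rigorous. The manifold-valued range introduces no additional complication, since the entire proof manipulates Radon--Nikodym derivatives with respect to a common reference measure and never invokes any linear, metric, or smooth structure on $\mathcal{M}$.
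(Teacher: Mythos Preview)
The paper does not prove this theorem at all: it is stated as a background result with citations to \cite{DRV10,KOV17,WRRW23} and used as a black box in the proof of Theorem~\ref{th:DP}. So there is no ``paper's own proof'' to compare against.

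That said, your sketch is precisely the Dwork--Rothblum--Vadhan argument from the first cited reference, so in that sense it matches what the paper is invoking. The outline is sound, and you have correctly identified the only real difficulty: for $(\epsilon,\delta)$-DP with $\delta>0$, one cannot simply assert $|L_i|\le\epsilon$ on a set of probability $1-\delta$; establishing the truncation event $E_i$ with the required simultaneous control on the tail and the conditional mean needs the ``dense model'' or simulation lemma (every $(\epsilon,\delta)$-DP mechanism is $\delta$-close in total variation to an $(\epsilon,0)$-DP mechanism on each adjacent pair). Once that lemma is in place, the martingale-plus-Azuma step goes through exactly as you wrote, and your remark that the manifold codomain plays no role is correct.
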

Kairouz et al.~\cite{KOV17} have presented an optimal composition bound for differential privacy, which provides an exact choice of the best privacy parameters. Nevertheless, the resulting optimal composition bound is comparably complex, see details in~\cite{MV15}. 
Hence, as a pragmatic alternative, Theorem~\ref{th:composition} and~\ref{th:adv composition} are often preferred in practice due to their convenience. 



Intuitively, according to Definition~\ref{def:DP}, a smaller $\epsilon$ corresponds to a more private mechanism. In the Euclidean space $\mathbb{R}^d$, a commonly employed mechanism that satisfies $(\epsilon,\delta)$-DP is the Gaussian mechanism, which introduces noise following a Gaussian distribution~\cite{DR14} to its output. Notably, the Gaussian mechanism has been extended from Euclidean spaces to Riemannian manifolds~\cite{HMJG22}, allowing the addition of noise following an isotropic Gaussian distribution (refer to Definition \ref{def:tangent space Gaussian distribution}) with respect to the Riemannian metric in the tangent space of the manifold. This extension is advantageous as it enables the measurement of sensitivity with respect to the Riemannian metric. Consequently, several Riemannian optimization algorithms ensuring differential privacy have been proposed based on the tangent space Gaussian distribution, as seen in works such as~\cite{HMJG22,UHJM23}. 

\begin{definition}[Tangent space Gaussian distribution~\cite{HMJG22,UHJM23}] \label{def:tangent space Gaussian distribution}
	For any point $x$ in $\mathcal{M}$, a tangent vector $\xi \in \mathrm{T}_{x}\mathcal{M}$ obeys a tangent space Gaussian distribution at $x$ with mean $\mu\in \mathrm{T}_{x}\mathcal{M}$ and standard deviation $\sigma>0$ if its density is given by $p_x(\xi)={C^{-1}_{x,\sigma}}\exp(-\frac{\left\|\xi-\mu\right\|_x^2}{2\sigma^2})$, where $C_{x,\sigma}$ is the normalizing constant. We use $\xi\sim \mathcal{N}_x(\mu,\sigma^2)$ to denote that $\xi$ follows the tangent space Gaussian distribution with $\mu,\sigma$ at $x$.
\end{definition}

Consider the vector $\vec{\varepsilon}\in \mathbb{R}^d$ obtained by vectorizing the tangent vector $\varepsilon\in \mathrm{T}_{x} \mathcal{M}\simeq \mathbb{R}^d$ in the orthonormal coordinate system $\mathscr{B}_x=(\mathfrak{b}_{1}^x,\mathfrak{b}_2^x,\dots,\mathfrak{b}_d^x)$. The density function can then be expressed as $p_x(\varepsilon)=C^{-1}_{x,\sigma}\exp\left(-\frac{1}{2\sigma^2}\|\vec{\varepsilon}-\vec{\mu}\|_2^2\right)$. This formulation corresponds to a standard Gaussian distribution on $\mathbb{R}^d$ with a mean of $\vec{\mu}$ and a covariance of $\sigma^2I_d$. The normalizing constant is given by $C^{-1}_{x,\sigma}=\sqrt{(2\pi)^d\sigma^2}$. Consequently, if $\varepsilon\in \mathcal{N}_x(0,\sigma^2)$, it follows that $\mathbb{E}[\|\varepsilon\|_{x}^2]=\mathbb{E}[\|\vec{\varepsilon}\|_2^2] = d\sigma^2$.

By~\cite[Proposition 2]{HMJG22}, for a query $H:\mathcal{Z}^n \rightarrow \mathrm{T}_{x}\mathcal{M}$, a associated Gaussian mechanism $\mathcal{H}(D)=H(D)+\xi$ with $\xi \sim \mathcal{N}_x(0,\sigma^2)$ is $(\epsilon,\delta)$-DP, if the standard deviation $\sigma$ satisfies $\sigma^2\ge 2\log(1.25/\delta)\Delta_H^2/\epsilon^2$, where $\Delta_H:=\sup_{\{D,D'\subset \mathcal{Z}^n:D\sim D'\}}\|H(D)-H(D')\|_x$ is the global sensitivity of $H$ with respect to the Riemannian metric. Hence, a smaller $\epsilon$ signifies stronger privacy protection, leading to an increase in the variance of the added noise. From an algorithmic standpoint, incorporating a substantial amount of noise can adversely impact the convergence performance of the algorithm. That is to say, a trade-off exists between ensuring privacy and optimizing convergence performance.

\section{Private federated learning on Riemannian manifolds}
\label{sec:PriRFed}


The proposed Riemannian federated learning framework with differential privacy is stated in Algorithm~\ref{alg:PriRFed}, which partly draws on the Euclidean federated learning framework with differential privacy in ~\cite{ZCLS21}. 


Algorithm~\ref{alg:PriRFed} can be divided into two stages: local training and server aggregation. In the first place, the initial global guess $x^{(0)}$ is sent to active agents subsampled without replacement by the server. Subsequently, the two stages are executed alternately $T$ times as follows.
\begin{itemize}
	\item In the first stage, all the selected agents parallelly perform private local training procedures. 
 The resulting trained parameters $\tilde{x}_i^{(t+1)}$ are then uploaded to the server. See Lines~\ref{alg:PriRFed:step4} to~\ref{alg:PriRFed:step8} in Algorithm~\ref{alg:PriRFed}. 
	\item In the second stage, the server aggregates the parameters $\tilde{x}_i^{(t+1)}$ received from the agents to generate a global parameter $x^{(t+1)}$, and then re-subsamples $s_{t+1}$ agents and broadcasts the global parameter to the selected agents. See Lines~\ref{alg:PriRFed:step10},~\ref{alg:PriRFed:step2} and~\ref{alg:PriRFed:step3} in Algorithm~\ref{alg:PriRFed}. 	
\end{itemize}
Finally, Algorithm~\ref{alg:PriRFed} returns a global parameter $\tilde{x}$ in two ways: one accepts the final outer iterates $x^{(T)}$ (i.e., \textbf{Option 1}), and the other one is uniformly sampled from all outer iterates at random (i.e., \textbf{Option 2}).

\begin{algorithm}[ht]
  \caption{Private Riemannian Federated Learning: PriRFed}
  \begin{algorithmic}[1] \label{alg:PriRFed}
  \REQUIRE Initial global parameter $x^{(0)}$, datasets $\{{D}_1,D_2,\dots,D_N\}$, the number of aggregations $T$, the number of local iterations $K$, stepsizes $\{\alpha_t\}_{t=0}^{T-1}$;
  \ENSURE $\tilde{x}$.
  \FOR{$t=0,1,\dots, T-1$}
  \STATE Sample $\mathcal{S}_t$ from $[N]$ without replacement of size $|\mathcal{S}_t|=s_t$; \label{alg:PriRFed:step2}
    \STATE \label{alg:PriRFed:step3} Broadcast the global parameter $x^{(t)}$ to selected agents.
  \STATE  \textbf{Local training process: }   \label{alg:PriRFed:step4}
  \WHILE {$i \in \mathcal{S}_t$ \textbf{in parallel}} \label{alg:PriRFed:step5}
  \STATE \label{alg:PriRFed:step6} $\tilde{x}_i^{(t+1)} \gets \mathrm{PrivateLocalTraining}(x^{(t)}, {D}_i, K, \sigma_i,\alpha_t)$;

  \STATE \label{alg:PriRFed:step7} Send $\tilde{x}^{(t+1)}_i$ to the server;
  \ENDWHILE \label{alg:PriRFed:step8}
  \STATE \label{alg:PriRFed:step9} \textbf{Sever aggregating process: }
  \STATE \label{alg:PriRFed:step10} Aggregate the received local parameters to produce a global parameter $x^{(t+1)}$ by~\eqref{Prelim:tangentMean};
  \ENDFOR \label{alg:PriRFed:step11}
  \STATE \textbf{Option 1:} $\tilde{x} = x^{(T)}$; \label{alg:PriRFed:step12}
  \STATE \textbf{Option 2:} $\tilde{x}$ is uniformly sampled from $\{x^{(t)}\}_{t=1}^{T}$ at random; \label{alg:PriRFed:step13}
 \end{algorithmic}
\end{algorithm}

One can use any local training procedure that satisfies $(\epsilon,\delta)$-DP in Line~\ref{alg:PriRFed:step6}. In this paper, the DP-RSGD~\cite{HMJG22} and DP-RSVRG~\cite{UHJM23} are used for convergence analysis, as shown in Section~\ref{sec:conv}.


Due to the nonlinearity of Riemannian manifolds in most cases, the weighted average~\eqref{prel:weight-ave} is not competent for the aggregation in the server.
Fr\'{e}chet mean, as a generalization of weighted average
\begin{align} \label{PriRFed:Freche mean}	
	x^{(t+1)} \gets \argmin_{x\in \mathcal{M}} \frac{1}{2} \sum_{i\in \mathcal{S}_t}\frac{N_i}{\sum_{i \in \mathcal{S}_t}N_i}\mathrm{dist}^2(x,x_i^{(t+1)}),
\end{align}
is a natural way to aggregate the local parameters. However, a Fr\'{e}chet mean of points on a Riemannian manifold generally does not have a closed-form solution. Therefore its computation requires extra work, i.e., solving optimization problems. One remedy is only to solve Problem~\eqref{PriRFed:Freche mean} approximately. One approach in~\cite{LM22} is to use one step of Riemannian steepest gradient method. Specifically, the Riemannian gradient of the  objective in~\eqref{PriRFed:Freche mean} is
\[
	\mathrm{grad}\left(\frac{1}{2}\sum_{i\in \mathcal{S}_t}\frac{N_i}{\sum_{i\in \mathcal{S}_t}}\mathrm{dist}(x,x_i^{(t+1)})\right) = -\sum_{i\in \mathcal{S}_t}\frac{N_i}{\sum_{i\in \mathcal{S}_t}}\mathrm{Exp}_x^{-1}(x_i^{(t+1)}).
\]
One step of the Riemannian steepest gradient descent with initial guess $x^{(t)}$ yields
\begin{align} \label{Prelim:tangentMean}
	x^+ \gets \mathrm{Exp}_{x^{(t)}}\left( \sum_{i\in \mathcal{S}_t}\frac{N_i}{\sum_{i \in \mathcal{S}_{t}}N_i}\mathrm{Exp}_{x^{(t)}}^{-1}(x_i^{(t+1)}) \right).
\end{align}
which is used to approximate the solution of Problem~\eqref{PriRFed:Freche mean}. The approximation $x^+$ to the solution of~\eqref{PriRFed:Freche mean} is called tangent mean.


\section{Privacy guarantee analysis} \label{sec:privacy}

In Algorithm~\ref{alg:PriRFed}, the server randomly selects some agent(s), at each round of outer iteration, to locally update the parameter(s). This processing is called subsampling and is formally defined in Definition~\ref{def:subsample}. 
\begin{definition}[Subsample]\label{def:subsample}
	Given a dataset $D$ of $n$ points, namely $D=\{D_1,D_2,\dots,D_n\}$ (where $D_i$ may be a dataset), an operator $\mathcal{S}_\rho^{\mathrm{wor}}$, called subsample, randomly chooses a sample from the uniform distribution over all subsets of $D$ of size $m$ without replacement. The ratio $\rho:= m/n$ is called the sampling rate of the operator $\mathcal{S}_\rho^{\mathrm{wor}}$. Formally, letting $\{i_1,i_2,\dots,i_m\}$ be the sampled indices, $\mathcal{S}_{\rho}^{\mathrm{wor}}$ is defined by 
\begin{align*}
  \mathcal{S}_\rho^{\mathrm{wor}}:& \mathcal{Z}^{N_1} \times \mathcal{Z}^{N_2} \times \ldots \times \mathcal{Z}^{N_n} \rightarrow \mathcal{Z}^{N_{i_1}} \times \mathcal{Z}^{N_{i_2}} \times \ldots \times \mathcal{Z}^{N_{i_m}}:
  (D_1,D_2,\dots,D_n) \mapsto (D_{i_1},D_{i_2},\dots,D_{i_m}).
\end{align*} 
\end{definition}

Compositing a mechanism obeying DP with a subsampling operator without replacement, the resulting mechanism also satisfies DP and the privacy of the mechanism is amplified to some extent, as stated in Lemma~\ref{lem:Subsampling lemma}.
In machine learning, we often work on mini-batches of a dataset. From the viewpoint of subsampling, this can amplify the privacy of the machine learning algorithm based on DP. Another setting is federated learning framework, where the server randomly samples some agents to perform local updates at each outer iteration.
These were discussed in the papers~\cite{KLNRS11,BNS13, BBG18}. A proof of Lemma~\ref{lem:Subsampling lemma} can be directly found in the note~\cite{Ull17}.

\begin{lemma}[Subsampling lemma] \label{lem:Subsampling lemma}
Let $\mathcal{S}_{\rho}^{\mathrm{wor}}:\mathcal{Z}^{N_1} \times \mathcal{Z}^{N_2} \times \ldots \times \mathcal{Z}^{N_n} \rightarrow \mathcal{Z}^{N_{i_1}}\times \mathcal{Z}^{N_{i_2}}\times \dots \times \mathcal{Z}^{N_{i_m}}$ be a subsample operator (defined by Definition~\ref{def:subsample}) with sampling rate $\rho=m/n$ and sampled indices $\{i_1,i_2,\dots,i_m\}$. 
 Suppose that $\mathcal{A}:\mathcal{Z}^{N_{i_1}}\times \mathcal{Z}^{N_{i_2}}\times \dots \times \mathcal{Z}^{N_{i_m}} \rightarrow \mathcal{M}$ is a mechanism obeying $(\epsilon,\delta)$-DP. Then $\mathcal{A}':=\mathcal{A}\circ \mathcal{S}_{\rho}^{\mathrm{wor}}:\mathcal{Z}^{N_1} \times \mathcal{Z}^{N_2} \times \ldots \times \mathcal{Z}^{N_n} \rightarrow \mathcal{M}$ provides $(\epsilon',\delta')$-DP, where $\epsilon'=\log(1+\rho (\mathrm{exp}(\epsilon)-1))$ and $\delta'\le\rho \delta$.
\end{lemma}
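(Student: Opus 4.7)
The plan is to establish $(\epsilon',\delta')$-DP of $\mathcal{A}'=\mathcal{A}\circ\mathcal{S}_\rho^{\mathrm{wor}}$ via a coupling argument that conditions on whether the distinguishing element is subsampled. Fix adjacent datasets $D,D'\in\mathcal{Z}^{N_1}\times\cdots\times\mathcal{Z}^{N_n}$ differing only at a single index $i^*\in[n]$, and fix any measurable $\mathcal{S}\subseteq\mathcal{M}$. Couple the two subsampling operations by drawing a single uniformly random subset $T\subseteq[n]$ of size $m$ and applying it to both; the marginal probability that $i^*\in T$ is exactly $\rho=m/n$.

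Case-split on whether $i^*\in T$. If $i^*\notin T$ then $D_T=D'_T$, so $\mathcal{A}(D_T)$ and $\mathcal{A}(D'_T)$ have identical distributions conditional on this event. If $i^*\in T$, then $D_T$ and $D'_T$ are themselves adjacent in $\mathcal{Z}^{N_{i_1}}\times\cdots\times\mathcal{Z}^{N_{i_m}}$, so the hypothesized $(\epsilon,\delta)$-DP of $\mathcal{A}$ yields $\mathrm{P}[\mathcal{A}(D_T)\in\mathcal{S}]\le e^\epsilon\mathrm{P}[\mathcal{A}(D'_T)\in\mathcal{S}]+\delta$ pointwise in $T$. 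The law of total probability then represents each of $\mathrm{P}[\mathcal{A}'(D)\in\mathcal{S}]$ and $\mathrm{P}[\mathcal{A}'(D')\in\mathcal{S}]$ as convex combinations with weight $1-\rho$ on a common (``unaffected'') distribution and weight $\rho$ on the conditional ``selected'' distributions. The $\delta'=\rho\delta$ part of the bound is immediate, because the additive error $\delta$ is only incurred conditional on $\{i^*\in T\}$, an event of probability $\rho$.

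The main obstacle is deducing the multiplicative factor $e^{\epsilon'}=1+\rho(e^\epsilon-1)$, rather than a looser bound such as the trivial $e^\epsilon$. Naively substituting the conditional DP inequality into the mixture representation of $\mathrm{P}[\mathcal{A}'(D)\in\mathcal{S}]$ leaves a residual term whose sign is not \emph{a priori} controlled, so the argument must exploit the fact that the ``unselected'' branch is the same measure on both sides, not merely a generic probability bounded by one. The remedy is to combine the DP bound on the ``selected'' branch with the identity $e^{\epsilon'}=(1-\rho)\cdot 1+\rho\cdot e^\epsilon$, which realizes $e^{\epsilon'}$ itself as a convex combination of the trivial multiplicative factor $1$ (valid on the common ``unselected'' branch) and the DP factor $e^\epsilon$ (valid on the ``selected'' branch); matching these weights against the mixture weights $1-\rho$ and $\rho$ in $\mathrm{P}[\mathcal{A}'(D')\in\mathcal{S}]$ completes the comparison and yields the stated $\epsilon'=\log(1+\rho(e^\epsilon-1))$, following the argument in~\cite{Ull17}.
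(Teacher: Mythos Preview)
The paper does not supply its own proof of this lemma; it simply states that ``a proof of Lemma~\ref{lem:Subsampling lemma} can be directly found in the note~\cite{Ull17}.'' Your sketch is therefore strictly more detailed than the paper's treatment---you lay out the coupling and case-split, correctly extract $\delta'=\rho\delta$ from the probability-$\rho$ event $\{i^*\in T\}$, and identify where the subtlety for the multiplicative bound lies before deferring to the same reference~\cite{Ull17}---so your approach is fully aligned with the paper's.

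One small caution: your ``remedy'' paragraph is a high-level gloss rather than a self-contained argument. Simply matching the weights $(1-\rho,\rho)$ against the factors $(1,e^\epsilon)$ does not close the gap on its own, because after substituting $r\le e^\epsilon r'+\delta$ one is left needing $r'\le q$, which is not given; the full argument in~\cite{Ull17} uses an additional comparison (between the ``selected'' and ``unselected'' conditional laws) to handle this. Since you explicitly invoke~\cite{Ull17} at that point, and since the paper itself does nothing more, this is not a defect relative to the paper.
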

A stable transformation is a map that for any two datasets, the difference of the two mapped results is not too different from the difference between the original datasets. The formal definition is given by Definition~\ref{def:Stable Transformation}. 
\begin{definition}[$c$-stable transformation~{\cite[Definition~2]{McS09}}] \label{def:Stable Transformation}
Let $\bar{\mathcal{Z}}:=\mathcal{Z}^{N_1} \times \mathcal{Z}^{N_2} \times \ldots \times \mathcal{Z}^{N_n}$.	A transformation $T: \bar{\mathcal{Z}} \rightarrow \bar{\mathcal{Z}}$ is said $c$-stable if for any two data sets $D_1,D_2\in \bar{\mathcal{Z}}$, the following holds
	\[
		|T(D_1)\oplus T(D_2)|\le c|D_1\oplus D_2|.
	\]
 where $c$ is a constant.
\end{definition}
Theorem~\ref{thm:Pre-processing} (pre-processing) shows that the composition of a mechanism obeying DP and a $c$-stable transformation also obeys DP and the privacy budget of the composition does not increase significantly.
\begin{theorem}[Pre-processing]\label{thm:Pre-processing}
Let $\bar{\mathcal{Z}}:=\mathcal{Z}^{N_1} \times \mathcal{Z}^{N_2} \times \ldots \times \mathcal{Z}^{N_n}$.	Suppose that $\mathcal{A}: \bar{\mathcal{Z}} \rightarrow \mathcal{M}$ be $(\epsilon,\delta)$-DP, and $T: \bar{\mathcal{Z}} \rightarrow \bar{\mathcal{Z}}$ be an arbitrary $c$-stable transformation. Then the composition $\mathcal{A}\circ T$ provides $(c\epsilon,	c \mathrm{exp}((c-1)\epsilon)\delta)$-DP.
\end{theorem}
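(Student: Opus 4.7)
The plan is to reduce the statement to a ``group privacy'' property for $\mathcal{A}$ and then invoke that property after using $c$-stability to control the symmetric difference. Concretely, I would first prove the following lemma: if $\mathcal{A}$ is $(\epsilon,\delta)$-DP and two datasets $E,E' \in \bar{\mathcal{Z}}$ satisfy $|E \oplus E'| \le 2k$, then for every measurable $S \subseteq \mathcal{M}$,
\[
\mathrm{P}\{\mathcal{A}(E) \in S\} \le e^{k\epsilon}\,\mathrm{P}\{\mathcal{A}(E') \in S\} + k\,e^{(k-1)\epsilon}\,\delta.
\]
Granted this lemma, the theorem follows at once: for any $D \sim D'$ we have $|D \oplus D'| \le 2$, and $c$-stability of $T$ gives $|T(D) \oplus T(D')| \le 2c$; applying the lemma with $k = c$ to the pair $(T(D),\,T(D'))$ yields that $\mathcal{A} \circ T$ is $(c\epsilon,\, c\,e^{(c-1)\epsilon}\,\delta)$-DP, which is exactly the claim.

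To establish the group privacy lemma, I would interpolate between $E$ and $E'$ by a chain $E = E_0, E_1, \ldots, E_k = E'$ of datasets in $\bar{\mathcal{Z}}$ in which each consecutive pair $(E_{j-1}, E_j)$ is adjacent; such a chain always exists because a single addition, removal, or replacement of a record changes the symmetric difference by at most $2$, and $2k$ such operations are enough to cover $|E \oplus E'| \le 2k$. The proof then proceeds by induction on $k$. The base case $k = 1$ is exactly the definition of $(\epsilon,\delta)$-DP applied to $(E_0,E_1)$. For the inductive step, I would combine the inductive hypothesis
\[
\mathrm{P}\{\mathcal{A}(E_0) \in S\} \le e^{(k-1)\epsilon}\,\mathrm{P}\{\mathcal{A}(E_{k-1}) \in S\} + (k-1)\,e^{(k-2)\epsilon}\,\delta
\]
with the single-step bound $\mathrm{P}\{\mathcal{A}(E_{k-1}) \in S\} \le e^{\epsilon}\,\mathrm{P}\{\mathcal{A}(E_k) \in S\} + \delta$, multiplying the latter through by $e^{(k-1)\epsilon}$ and then consolidating the additive terms via $e^{(k-1)\epsilon}\delta + (k-1)\,e^{(k-2)\epsilon}\delta \le k\,e^{(k-1)\epsilon}\delta$, which closes the induction.

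The $\epsilon$-part of the chain is a routine multiplicative telescoping and the construction of the chain itself is immediate from the symmetric-difference bound. The main obstacle is keeping the $\delta$-coefficient as tight as the claimed $c\,e^{(c-1)\epsilon}$: each of the $k$ intermediate adjacencies contributes its own $\delta$, and these contributions get amplified by the $e^{j\epsilon}$ factors coming from the later links of the chain, producing a geometric-like sum. Dominating every amplification factor uniformly by $e^{(k-1)\epsilon}$ yields the clean closed form $k\,e^{(k-1)\epsilon}\delta$, which specializes to the theorem's bound when $k = c$.
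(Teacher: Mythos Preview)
Your proposal is correct and follows essentially the same approach as the paper: use $c$-stability to bound $|T(D)\oplus T(D')|\le 2c$ for adjacent $D,D'$, then invoke group privacy to obtain the $(c\epsilon,\,c\,e^{(c-1)\epsilon}\delta)$-DP bound. The only difference is that the paper simply cites group privacy from \cite[pp.~20]{DR14}, whereas you spell out the chain-and-induction proof of that lemma explicitly.
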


\begin{proof}
	Using the definition of $c$-stability, for any two adjacent data sets $D$ and $D'$, we have $|T(D)\oplus T(D')|\le c|D\oplus D'|\le2c$, which is indicative that $T(D)$ and $T(D')$ differ in at most $c$ data points. Therefore, by the group privacy~\cite[pp.~20]{DR14}, we have, for any measurable set $\mathcal{S}\in \mathcal{M}$, 
	\begin{align*}
		\mathrm{P}[\mathcal{A}(T(D))\in \mathcal{S}] \le \mathrm{exp}(c\epsilon)\mathrm{P}[\mathcal{A}(T(D'))\in \mathcal{S}] + c \mathrm{exp}((c-1)\epsilon)\delta, 
	\end{align*}
	which is exactly what we point out.
\end{proof}

In particular, if $T$ is $1$-stable, then the composition is still $(\epsilon,\delta)$-DP by Theorem~\ref{thm:Pre-processing}. 
Now, we are ready to prove the privacy guarantee of Algorithm~\ref{alg:PriRFed} provided that each local training procedure obeys differential privacy, as stated in Theorem~\ref{th:DP}. 
\begin{theorem}[Privacy guarantee of Algorhtm~\ref{alg:PriRFed}]
\label{th:DP}
	If the privately local training satisfies $(\epsilon, \delta)$-DP for agent $i\in[N]$, then Algorithm~\ref{alg:PriRFed} obeys $(\epsilon',\delta')$-DP with 
	$
		\epsilon' = \min(T\tilde{\epsilon},\sqrt{2T\ln({1}/{\hat{\delta}})}\tilde{\epsilon} + T\tilde{\epsilon}(\mathrm{exp}(\tilde{\epsilon})-1))
	$ and $\delta'=\hat{\delta}+T\tilde{\delta}$ 
	for any $\hat{\delta}\in(0,1)$, where  $		\tilde{\epsilon} = \log(1+\rho(\mathrm{exp}(s\epsilon)-1))$, $\tilde{\delta} = \rho s\delta$, $\mathcal{S}_t\subseteq[N]$ with $s_t=s\le N$, and $\rho=s/N$ is the sampling rate.

\end{theorem}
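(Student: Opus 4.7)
The plan is to bound the total privacy loss by peeling Algorithm~\ref{alg:PriRFed} one layer at a time: within a single round we handle (i) the bundling of the $s$ per-agent trainings into one joint mechanism, (ii) the amplification due to the uniform-without-replacement subsampling of $s$ agents out of $N$, and (iii) the deterministic server-side tangent mean aggregation. Across rounds we then apply both the basic and the advanced composition theorems and keep the better $\epsilon$ bound, and we finish with a post-processing step to move the guarantee onto the actual returned iterate $\tilde x$.

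\textbf{Per-round guarantee.} Fix a round $t$ with selected set $\mathcal{S}_t$ of size $s$. Conditioned on the subsampled tuple of local datasets, the $s$ parallel private local trainings can be viewed as the composition of $s$ independent $(\epsilon,\delta)$-DP mechanisms acting on disjoint local data. By Theorem~\ref{th:composition} (sequential composition), this joint mechanism $\mathcal{A}_t$ from the tuple domain into $\mathcal{M}^s$ is $(s\epsilon,s\delta)$-DP. The sampling in Line~\ref{alg:PriRFed:step2} is the operator $\mathcal{S}_\rho^{\mathrm{wor}}$ with rate $\rho=s/N$; composing it before $\mathcal{A}_t$, Lemma~\ref{lem:Subsampling lemma} amplifies the guarantee of $\mathcal{A}_t\circ\mathcal{S}_\rho^{\mathrm{wor}}$ to $(\tilde\epsilon,\tilde\delta)$-DP with $\tilde\epsilon=\log(1+\rho(\exp(s\epsilon)-1))$ and $\tilde\delta=\rho s\delta$. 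Finally, the aggregation~\eqref{Prelim:tangentMean} is a deterministic map of the $s$ received local iterates, so Theorem~\ref{th:post-processing} (post-processing) shows that the updated global parameter $x^{(t+1)}$, as a function of the full dataset, is itself $(\tilde\epsilon,\tilde\delta)$-DP.

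\textbf{$T$-round composition and output.} Because $x^{(t+1)}$ depends on the raw data only through the $t$-th per-round mechanism (with all earlier iterates being already post-processed outputs fed in as inputs), the full $T$-round execution is an adaptive composition of $T$ mechanisms, each $(\tilde\epsilon,\tilde\delta)$-DP. Theorem~\ref{th:composition}, which extends to the adaptive setting by the remark following its statement, yields $(T\tilde\epsilon,T\tilde\delta)$-DP, while Theorem~\ref{th:adv composition} yields $(\sqrt{2T\ln(1/\hat\delta)}\,\tilde\epsilon+T\tilde\epsilon(\exp(\tilde\epsilon)-1),\,\hat\delta+T\tilde\delta)$-DP for any $\hat\delta\in(0,1)$. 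Since both guarantees hold simultaneously, we may take the smaller of the two $\epsilon$ bounds while padding the smaller $\delta$ by $\hat\delta$, producing exactly $(\epsilon',\delta')$-DP for the sequence $(x^{(1)},\ldots,x^{(T)})$. Both Option~1 ($\tilde x=x^{(T)}$) and Option~2 (a uniformly random iterate drawn with randomness independent of the data) are post-processings of this sequence, so one more call to Theorem~\ref{th:post-processing} transfers the bound to $\tilde x$.

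\textbf{Main obstacle.} The only delicate point is the adjacency bookkeeping when chaining subsampling with the disjoint per-agent local datasets. Under record-level adjacency (Definition~\ref{def:DP}), a single record change in the full dataset modifies at most one $D_i$, hence at most one coordinate of the tuple produced by $\mathcal{S}_\rho^{\mathrm{wor}}$, which is precisely the hypothesis that legitimises applying Lemma~\ref{lem:Subsampling lemma} to $\mathcal{A}_t\circ\mathcal{S}_\rho^{\mathrm{wor}}$ with the $(s\epsilon,s\delta)$ input budget. Once that accounting is in place, the rest of the argument is a direct assembly of the post-processing, sequential-composition, advanced-composition and subsampling results already stated in the paper.
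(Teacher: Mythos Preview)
Your proposal is correct and follows essentially the same route as the paper's proof: per-round sequential composition of the $s$ local trainings, subsampling amplification via Lemma~\ref{lem:Subsampling lemma}, post-processing for the tangent mean, and then both basic and advanced composition across the $T$ rounds. The only cosmetic difference is that the paper formalises your ``adjacency bookkeeping'' step by explicitly invoking Theorem~\ref{thm:Pre-processing} with the $1$-stable coordinate projections $\mathcal{H}_i$ to lift each $\mathrm{PLT}_i$ to the tuple domain before applying sequential composition, whereas you fold that argument into your ``Main obstacle'' paragraph.
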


\begin{proof}
At the $t$-th round of outer iteration, let $\mathcal{I}_s=\{i_k\}_{k=1}^s\subset[N]$ be any index of size $s$ without replacement and 
let $\mathrm{PLT}_i^{(t)}$ denote the privately local training of agent $i$, and let $\tilde{x}_i^{(t+1)}$ denote its output. 
Consider a transformation $\mathcal{H}_i:\prod_{i\in \mathcal{I}_s}\mathcal{Z}^{N_i} \rightarrow \prod_{i\in \mathcal{I}_s}\mathcal{Z}^{N_i}:(D_{i_1},D_{i_2},\dots,D_{i_k}) \mapsto (0,\dots,D_{i},\dots,0)$, which is evidently $1$-stable since for any two $\tilde{D},\tilde{D}'\in \prod_{i\in \mathcal{I}_s}\mathcal{Z}^{N_i}$, $|\mathcal{H}_i(\tilde{D})\oplus \mathcal{H}_i(\tilde{D}')|=|\tilde{D}_i \oplus \tilde{D}'_i|\le | \tilde{D} \oplus \tilde{D}' |$. Due to that $\mathrm{PLT}_i^{(t)}$ obeys $(\epsilon,\delta)$-DP, it follows that $\mathcal{A}_i^{(t)}:=\mathrm{PLT}_i^{(t)}\circ \mathcal{H}_i:\prod_{i\in \mathcal{I}_s}\mathcal{Z}^{N_i} \rightarrow \mathcal{M}$ is also $(\epsilon,\delta)$-DP from Theorem~\ref{thm:Pre-processing}. 
Subsequently, consider the sequential composition of $\{\mathcal{A}_{i_k}^{(t)}\}_{k=1}^s$, defined by 
\begin{align*}
	\tilde{\mathcal{A}}^{(t)}:\prod_{i\in\mathcal{I}_s}\mathcal{Z}^N_i \rightarrow \mathcal{M}^s : \tilde{D} \mapsto (\mathcal{A}_{i_1}^{(t)}(\tilde{D}), \mathcal{A}_{i_2}^{(t)}(\tilde{D}), \mathcal{A}_{i_s}^{(t)}(\tilde{D})).
\end{align*}
According to the Sequential Composition Theorem~\ref{th:composition}, $\tilde{\mathcal{A}}^{(t)}$ provides $(s\epsilon,s\delta)$-DP. 

Now, we consider the subsampling $\mathcal{S}_{\rho,t}^{\mathrm{wor}}$ corresponding to Line~\ref{alg:PriRFed:step2} in Algorithm~\ref{alg:PriRFed}, with $\mathcal{S}_t=\{i_1^{(t)},i_2^{(t)},\dots,i_s^{(t)}\}$ and $\rho=s/N$ being the sampled indices and the sampling rate, respectively. We define $\tilde{\mathcal{A}}_{\mathcal{S}_t}^{(t)}:\prod_{i\in[N]}\mathcal{Z}^{N_i} \rightarrow \mathcal{M}$ as a mechanism such that $\tilde{\mathcal{A}}_{\mathcal{S}_t}^{(t)}(D_1,D_2,\dots,D_N)=\tilde{\mathcal{A}}^{(t)}(D_{i_1},D_{i_2},\dots,D_{i_s})$, i.e., $\tilde{\mathcal{A}}^{(t)}_{\mathcal{S}_t}:=\tilde{\mathcal{A}}^{(t)}\circ \mathcal{S}_{\rho,t}^{\mathrm{wor}}$. 
From Subsampling Lemma~\ref{lem:Subsampling lemma}, the composited mechanism $\tilde{\mathcal{A}}^{(t)}_{\mathcal{S}_t}$ obeys $(\tilde{\epsilon},\tilde{\delta})$-DP, where $\tilde{\epsilon}=\log(1+\rho(\mathrm{exp}(s\epsilon)-1))$, and $\tilde{\delta}=\rho	s\delta$. 

Therefore, for one round of outer iteration, the mechanism $\mathcal{R}_t$ is the composition of $\mathcal{A}_{\mathcal{S}_t}^{(t)}$ and the tangent mean $\mathcal{T}_x:\mathcal{M}^s\rightarrow \mathcal{M}:(x_{1_1},x_{i_2},\dots,x_{i_s})\mapsto \mathcal{T}_x(x_{i_1},x_{i_2},\dots,x_{i_s})=\mathrm{Exp}_{x}\left(\sum_{i\in \mathcal{S}_t}\frac{N_i}{\sum_{i\in \mathcal{S}_t}N_i}\mathrm{Exp}_{x}^{-1}(x_i)\right)$ for a given $x\in \mathcal{M}$, i.e., $\mathcal{R}_t=\mathcal{T}_{x^{(t)}}\circ {\mathcal{A}}_{\mathcal{S}_t}^{(t)}$. Then by the Post-processing Theorem~\ref{th:post-processing}, $\mathcal{R}_t$ is also $(\tilde{\epsilon},\tilde{\delta})$-DP. 

By now, we have that ${\mathcal{R}}_1$, ${\mathcal{R}}_2$, $\dots$, ${\mathcal{R}}_T$ are randomized mechanisms such that ${\mathcal{R}}_t$ is $(\tilde{\epsilon},\tilde{\delta})$-DP. After $T$ iterations, the whole mechanism $\mathcal{R}$ (i.e., Algorithm~\ref{alg:PriRFed}) is the composed sequence of these mechanisms, i.e., $\mathcal{R}=\mathcal{R}_{T}\circ \mathcal{R}_{T-1}\circ \dots \circ \mathcal{R}_1$. 
On the one hand, by the adaptive composability of Sequential Composition Theorem~\ref{th:composition}, $\mathcal{R}$ obeys $(\epsilon',\delta')$-DP with $\epsilon'=T\tilde{\epsilon}$ and $\delta'=T\tilde{\delta}$. 
On the other hand, by the adaptive composability of Advanced Composition Theorem~\ref{th:adv composition},  $\mathcal{R}$ satisfies $(\epsilon', \delta')$-DP, where $\delta'=\hat{\delta} + T\tilde{\delta}$ for any $\hat{\delta}>0$, and 
$\epsilon' = \sqrt{2T\ln({1}/{\hat{\delta}})}\tilde{\epsilon} + T\tilde{\epsilon}(\mathrm{exp}(\tilde{\epsilon})-1). $

\end{proof}

Table~\ref{table} reports the privacy parameter pairs $(\epsilon',\delta')$ in Theorem~\ref{th:DP}
using multiple parameters, i.e., the number of agents $N$, the number of sampled agents $s$, the sampling rate $\rho$, the number of iterations $T$. 
\begin{table}[ht]
\centering
\caption{The privacy parameters $(\epsilon',\delta')$ of Algorithm~\ref{alg:PriRFed} against the number of outer iterations with $(\epsilon,\delta)=(0.15,10^{-4})$ and $\hat{\delta}=10^{-3}$. The number $a.bc_{k}$ means $a.bc\times 10^{k}$.} 
\resizebox{16.5cm}{!}{
\begin{tabular}{lllllllllllllll}
\toprule
\multirow{2}{*}{$N$} & \multirow{2}{*}{$s$} & \multirow{2}{*}{$\rho$} & \multicolumn{2}{c}{$T=50$} & \multicolumn{2}{c}{$T=100$} & \multicolumn{2}{c}{$T=200$} & \multicolumn{2}{c}{$T=300$} & \multicolumn{2}{c}{$T=400$} & \multicolumn{2}{c}{$T=500$} \\ \cline{4-15}
                     &                      &                         & $\epsilon'$  & $\delta'$   & $\epsilon'$  & $\delta'$    & $\epsilon'$  & $\delta'$    & $\epsilon'$  & $\delta'$    & $\epsilon'$  & $\delta'$    & $\epsilon'$  & $\delta'$    \\ \hline
$100$                & $  1$                & $1.00_{-2}$             & $4.26_{-2}$  & $1.05_{-3}$ & $6.04_{-2}$  & $1.10_{-3}$  & $8.55_{-2}$  & $1.20_{-3}$  & $1.05_{-1}$  & $1.30_{-3}$  & $1.21_{-1}$  & $1.40_{-3}$  & $1.36_{-1}$  & $1.50_{-3}$  \\
$100$                & $  5$                & $5.00_{-2}$             & $     1.58$  & $2.25_{-3}$ & $     2.32$  & $3.50_{-3}$  & $     3.46$  & $6.00_{-3}$  & $     4.41$  & $8.50_{-3}$  & $     5.25$  & $1.10_{-2}$  & $     6.03$  & $1.35_{-2}$  \\
$200$                & $  1$                & $5.00_{-3}$             & $2.13_{-2}$  & $1.03_{-3}$ & $3.01_{-2}$  & $1.05_{-3}$  & $4.26_{-2}$  & $1.10_{-3}$  & $5.23_{-2}$  & $1.15_{-3}$  & $6.04_{-2}$  & $1.20_{-3}$  & $6.76_{-2}$  & $1.25_{-3}$  \\
$500$                & $  5$                & $1.00_{-2}$             & $2.98_{-1}$  & $1.25_{-3}$ & $4.25_{-1}$  & $1.50_{-3}$  & $6.09_{-1}$  & $2.00_{-3}$  & $7.52_{-1}$  & $2.50_{-3}$  & $8.75_{-1}$  & $3.00_{-3}$  & $9.85_{-1}$  & $3.50_{-3}$  \\
$300$                & $  5$                & $1.67_{-2}$             & $5.02_{-1}$  & $1.42_{-3}$ & $7.20_{-1}$  & $1.83_{-3}$  & $     1.04$  & $2.67_{-3}$  & $     1.29$  & $3.50_{-3}$  & $     1.51$  & $4.33_{-3}$  & $     1.70$  & $5.17_{-3}$  \\
$300$                & $ 10$                & $3.33_{-2}$             & $     3.52$  & $2.67_{-3}$ & $     5.36$  & $4.33_{-3}$  & $     8.32$  & $7.67_{-3}$  & $ 1.09_{1}$  & $1.10_{-2}$  & $ 1.33_{1}$  & $1.43_{-2}$  & $ 1.55_{1}$  & $1.77_{-2}$  \\
$400$                & $  5$                & $1.25_{-2}$             & $3.74_{-1}$  & $1.31_{-3}$ & $5.35_{-1}$  & $1.63_{-3}$  & $7.68_{-1}$  & $2.25_{-3}$  & $9.51_{-1}$  & $2.88_{-3}$  & $     1.11$  & $3.50_{-3}$  & $     1.25$  & $4.13_{-3}$  \\
$400$                & $ 10$                & $2.50_{-2}$             & $     2.56$  & $2.25_{-3}$ & $     3.83$  & $3.50_{-3}$  & $     5.84$  & $6.00_{-3}$  & $     7.55$  & $8.50_{-3}$  & $     9.11$  & $1.10_{-2}$  & $ 1.06_{1}$  & $1.35_{-2}$ \\
\bottomrule
\end{tabular}}
\label{table}
\end{table}

From Table~\ref{table}, we observe that for a suitable size of sampling, e.g., $s=5$, the privacy budget from Theorem~\ref{th:DP} for Algorithm~\ref{alg:PriRFed} is controlled in a reasonable range. In addition, if the sampling rate is sufficiently small, then the number of outer iterations, $T$, can be large while keeping $\epsilon'$ and $\delta'$ reasonably small. 
The numerical results in Section~\ref{sec:Num} show that using a small sampling rate (e.g., sampling only one agent), the resulting trained model has satisfactory accuracy with a reasonable size of the number of outer iterations $T$.

\section{Convergence analysis} \label{sec:conv}

In this section, we investigate the convergence properties for Algorithm~\ref{alg:PriRFed} combined with two privately local training procedures. One is the DP-RSGD~\cite{HMJG22} (the DP-SGD on Riemannian manifolds) discussed in Section~\ref{sec:conv:DP-RSGD} and the other one is the DP-RSVRG~\cite{UHJM23} (the DP-SVRG on Riemannian manifolds) discussed in Section~\ref{sec:conv:DP-RSVRG}. The proofs can be found in Appendix~\ref{appendix1} and~\ref{appendix2}.


The convergence analysis relies on Assumptions~\ref{ass:geodes-Lip}, \ref{ass:smooth} and~\ref{ass:regularization}, which have been used in the federate learning and the differential privacy setting~\cite{ZRS17,LM22,HMJG22,UHJM23}.
\begin{assumption} \label{ass:geodes-Lip}
	For all $i\in [N]$, $j\in[N_i]$, we suppose that the function $f_{ij}$ is geodesic $L_{f}$-Lipschitz continuous. Thus, the function $f_i$ is geodesic $L_f$-Lipschitz continuous for $i\in[N]$ and $f$ also is $L_f$-Lipschitz continuous.
\end{assumption}

\begin{assumption} \label{ass:smooth}
	For all $i\in [N]$, $j\in[N_i]$, we suppose that the function $f_{ij}$ is geodesically ${L}_{g}$-smooth, which implies that $f_i$ is geodesically ${L}_{g}$-smooth and that $f$ is geodesically ${L}_g$-smooth.
\end{assumption}

\begin{assumption} \label{ass:regularization}
	The manifold under consideration is complete and there exists a compact subset $\mathcal{W}\subset \mathcal{M}$ (diameter bounded by $M$) so that all the iterates of Algorithm~\ref{alg:PriRFed} and the optimal points are located in $\mathcal{W}$. The sectional curvature in $\mathcal{W}$ is bounded in $[\kappa_{\min},\kappa_{\max}]$. Moreover, we denote the following key geometrical constant that captures the impact of the manifold:
	\begin{equation} \label{eq:02}
	\begin{aligned}
\zeta = \begin{cases}
 		\frac{\sqrt{|\kappa_{\min}|}M}{\tanh(\sqrt{|\kappa_{\min}|}M)} &\text{if }\kappa_{\min} <0,  \\
 		1 & \text{if }\kappa_{\min}\ge0 .
\end{cases}
\end{aligned}
\end{equation}
\end{assumption}

\subsection{DP-RSGD for privately local training} \label{sec:conv:DP-RSGD}


The DP-RSGD algorithm inspired by \cite{HMJG22} for the local training of agent $i$ in Algorithm~\ref{alg:PriRFed} is stated in Algorithm~\ref{alg:DP-RSGD}.

\begin{algorithm}[ht]
  \caption{Private Local Training for Agent $i$: DP-RSGD}
  \begin{algorithmic}[1] \label{alg:DP-RSGD}
  \REQUIRE global parameter $x^{(t)}$, local dataset ${D}_i$, batch size $b_i$, noise scale $\sigma_i$, the number of training $K$, stepsize $\alpha$;
  \ENSURE $\tilde{x}_i^{(t+1)}$.
  \STATE Set $x_{i,0}^{(t)}\gets x^{(t)}$;
  \FOR{$k = 0,1,\dots,K-1$}
  \STATE Select $\mathcal{B}_k\subseteq [N_i]$ of size $b_i$, where the samples are randomly selected uniformly without replacement;
  \STATE Set $\eta_k^{(t)} \gets \left( \frac{1}{b_i}\sum_{p\in \mathcal{B}_k}\mathrm{clip}_{\tau}(\mathrm{grad}f_{i,p}(x_{i,k}^{(t)})) \right)+\varepsilon_{i,k}^{(t)}$, where $\mathrm{clip}_{\tau}(\cdot): \mathrm{T}_x \mathcal{M} \rightarrow \mathrm{T}_{x} \mathcal{M}: v \mapsto \min\{\tau/\|v\|_x,1\}v$ and $\varepsilon_{i,k}^{(t)}  \sim \mathcal{N}_{x_{i,k}^{(t)}}(0,\sigma_i^2)$;
  \STATE Update $x_{i,k+1}^{(t)} \gets \mathrm{Exp}_{x_k^{(t)}}(-\alpha \eta_k^{(t)})$;
  \ENDFOR
  \STATE \textbf{Option 1:} $\tilde{x}_i^{(t+1)} \gets x_{i,K}^{(t)}$ if \textbf{Option 1} is chosen in Algorithm~\ref{alg:PriRFed};
  \STATE \textbf{Option 2:} $\tilde{x}_i^{(t+1)}$ as uniformly selected at random from $\{x_{i,k}^{(t)}\}_{k=0}^{K}$ if \textbf{Option 2} is chosen in Algorithm~\ref{alg:PriRFed};
 \end{algorithmic}
\end{algorithm}

Algorithm~\ref{alg:DP-RSGD} starts by receiving the global parameter $x^{(t)}$ from the server.
At the $k$-th iteration, $b_i$ samples are randomly and uniformly selected from $D_i$ without replacement. One step of noisy gradient descent with fixed step size $\alpha$ is then performed, where the noise is generated by the so-called Gaussian mechanism on tangent space, see~\cite{HMJG22}. After executing $K$ iterations, the final output is returned in two ways, corresponding to the two options in Algorithm~\ref{alg:PriRFed}.


The clipping operation $\mathrm{clip}_{\tau}(\cdot)$ ensures that the norm of the clipped vector is not greater than $\tau$. It has been shown in~\cite[Theorem~1]{HMJG22} that setting $\sigma_i^2=o_i\frac{K\log(1/\delta)\tau^2}{N_i^2\epsilon^2}$ ensures that Algorithm~\ref{alg:DP-RSGD} is $(\epsilon,\delta)$-DP for agent $i$, where $o_i$ is a constant.
Note that under the assumption that the function $f_{i, j}$ is $L_f$-Lipschitz continuous, the norm of the gradient $\mathrm{grad}f_{i,p}(x_{i,k}^{(t)})$ is always smaller than $L_f$. However, the constant $L_f$ is unknown in practice. Using the clipping operation yields a known upper bound, which makes the parameter $\sigma_i$ more tractable. This technique has been used before~\cite{ACG16,UHJM23}. 

Now we are ready to give the first convergence of Algorithm~\ref{alg:PriRFed} combined with Algorithm~\ref{alg:DP-RSGD}, as stated in Theorem~\ref{th:conv:for DP-RSGD1}, which is inspired by~\cite{LM22}. Theorem~\ref{th:conv:for DP-RSGD1} depends on the setting, where all agents in an FL system are active and they perform one-step gradient descent with a fixed step size.

\begin{theorem}[Nonconvex, $K=1$] \label{th:conv:for DP-RSGD1}
	Suppose that Problem~\eqref{prob} satisfies Assumption~\ref{ass:geodes-Lip} and~\ref{ass:smooth}. 
	We run Algorithm~\ref{alg:PriRFed} combined with Algorithm~\ref{alg:DP-RSGD} with \textbf{Option 1}. If we set $s_t=N$, $K=1$, $\alpha_t=\alpha\le 1/L_g$, and $b_i=N_i$ (i.e., all agents are selected and they perform one-step gradient descent with fixed stepsize), then for the iterates $\{x^{(t)}\}$ of Algorithm~\ref{alg:PriRFed}, it holds that 	
	\begin{align} \label{th:conv:for DP-RSGD1:0}
		\min_{0 \leq t \leq T} \mathbb{E}[\|\mathrm{grad}f(x^{(t)})\|^2] \le \frac{2}{T\alpha}(f(x^{(0)})-f(x^*)) + \frac{dL_g\alpha}{ (\sum_{i=1}^NN_i)^2}\sum_{i=1}^NN_i^2\sigma_i^2,
	\end{align}
	where the expectation is taken over $\varepsilon_{i,0}^{(t)}$, and $x^*=\argmin_{x\in\mathcal{M}}f(x)$.
\end{theorem}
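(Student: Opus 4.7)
The plan is to reduce the one-step full-batch iteration to a noisy Riemannian gradient descent on the global objective and then apply the standard descent-lemma argument.

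First I would unfold what Algorithm~\ref{alg:DP-RSGD} does under the hypotheses $K=1$, $b_i=N_i$. Assuming (as is standard in this setting and consistent with Assumption~\ref{ass:geodes-Lip}) that the clipping threshold $\tau$ is taken no smaller than $L_f$, the clip operator is the identity on every Riemannian gradient, so the single inner update performed by agent $i$ is $x_i^{(t+1)}=\mathrm{Exp}_{x^{(t)}}(-\alpha(\mathrm{grad}f_i(x^{(t)})+\varepsilon_i^{(t)}))$, where $\varepsilon_i^{(t)}\sim\mathcal{N}_{x^{(t)}}(0,\sigma_i^2)$ are independent across $i$. Because all agents share the same basepoint $x^{(t)}$, the logarithms used in the tangent-mean aggregation~\eqref{Prelim:tangentMean} collapse cleanly, and using $s_t=N$ together with $\sum_i p_i\,\mathrm{grad}f_i=\mathrm{grad}f$ I obtain the compact form
\begin{equation*}
x^{(t+1)}=\mathrm{Exp}_{x^{(t)}}\bigl(-\alpha\bigl(\mathrm{grad}f(x^{(t)})+\xi^{(t)}\bigr)\bigr),\qquad \xi^{(t)}:=\sum_{i=1}^N p_i\,\varepsilon_i^{(t)}\in\mathrm{T}_{x^{(t)}}\mathcal{M}.
\end{equation*}

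Next I would apply the geodesic smoothness inequality recorded after Definition~\ref{def:geodsmooth} between $x^{(t)}$ and $x^{(t+1)}$. Plugging in $\mathrm{Exp}_{x^{(t)}}^{-1}(x^{(t+1)})=-\alpha(\mathrm{grad}f(x^{(t)})+\xi^{(t)})$ and $\mathrm{dist}^2(x^{(t)},x^{(t+1)})=\alpha^2\|\mathrm{grad}f(x^{(t)})+\xi^{(t)}\|^2_{x^{(t)}}$ gives a standard descent bound. Taking conditional expectation over $\{\varepsilon_i^{(t)}\}_{i=1}^N$, using $\mathbb{E}[\xi^{(t)}]=0$ and the independence of the agents' noises, yields
\begin{equation*}
\mathbb{E}[f(x^{(t+1)})]\le f(x^{(t)})-\alpha\Bigl(1-\tfrac{L_g\alpha}{2}\Bigr)\|\mathrm{grad}f(x^{(t)})\|^2+\tfrac{L_g\alpha^2}{2}\,\mathbb{E}[\|\xi^{(t)}\|^2].
\end{equation*}
The hypothesis $\alpha\le 1/L_g$ guarantees $1-L_g\alpha/2\ge 1/2$, trimming the first coefficient down to $\alpha/2$.

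I would then compute the noise variance: by independence and the remark after Definition~\ref{def:tangent space Gaussian distribution} giving $\mathbb{E}[\|\varepsilon_i^{(t)}\|^2]=d\sigma_i^2$,
\begin{equation*}
\mathbb{E}[\|\xi^{(t)}\|^2]=\sum_{i=1}^N p_i^2\,d\sigma_i^2=\frac{d\sum_{i=1}^N N_i^2\sigma_i^2}{(\sum_{i=1}^N N_i)^2}.
\end{equation*}
Finally I would telescope the resulting inequality over $t=0,1,\ldots,T-1$, lower-bound $\mathbb{E}[f(x^{(T)})]$ by $f(x^*)$, and divide by $T\alpha/2$; bounding the average by the minimum over $t$ then produces exactly~\eqref{th:conv:for DP-RSGD1:0}.

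I expect two small friction points rather than a single hard obstacle. The first is justifying that the clipping in Algorithm~\ref{alg:DP-RSGD} is inert here, which needs a brief appeal to Assumption~\ref{ass:geodes-Lip} together with the convention $\tau\ge L_f$. The second is the algebraic reduction of the tangent-mean update to a single exponential step; this works cleanly only because $K=1$ keeps every local iterate's logarithm anchored at the same basepoint $x^{(t)}$, so no curvature distortion or parallel-transport terms appear, a simplification that is special to this regime and will not survive in the $K>1$ or subsampled setting analyzed later.
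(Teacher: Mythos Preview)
Your proposal is correct and follows essentially the same approach as the paper: unfold the one-step inner update, collapse the tangent-mean aggregation into a single noisy Riemannian gradient step at the common basepoint $x^{(t)}$, apply the $L_g$-smoothness descent lemma, take expectation using $\mathbb{E}[\xi^{(t)}]=0$ and $\mathbb{E}[\|\xi^{(t)}\|^2]=d\sum_i p_i^2\sigma_i^2$, and telescope. The paper silently drops the clip in writing $\eta_0^{(t)}=\mathrm{grad}f_i(x^{(t)})+\varepsilon_{i,0}^{(t)}$, so your explicit remark that Assumption~\ref{ass:geodes-Lip} with $\tau\ge L_f$ renders clipping inert is a welcome clarification rather than a deviation; the only slip is the phrase ``bounding the average by the minimum,'' which should read the other way around ($\min_t\le\tfrac{1}{T}\sum_t$).
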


Theorem~\ref{th:conv:for DP-RSGD1} approximately estimates the upper bound of the norm of gradients. If one does not consider the privacy, which implies $\sigma_i=0$, and let the step size $\alpha = \chi / L_g$ for a constant $\chi \in (0, 1)$, then the second term vanishes in the right-hand side of inequality~\eqref{th:conv:for DP-RSGD1:0}. This is consistent with the existing results, e.g.,~\cite[Theorem~7]{LM22}. In contrast with Theorem~\ref{th:conv:for DP-RSGD1}, if one uses decaying step sizes, 
then Algorithm~\ref{alg:PriRFed} combined with Algorithm~\ref{alg:DP-RSGD} has global convergence, as stated in Theorem~\ref{th:conv:for DP-RSGD1_}.

\begin{theorem}[Nonconvex, $K=1$] \label{th:conv:for DP-RSGD1_}
	Under the same conditions as Theorem~\ref{th:conv:for DP-RSGD1} except that the step sizes $\{\alpha_t\}_{t=0}^{T-1}$ satisfies  
\begin{align} \label{th:conv:for DP-RSGD1_:1}
	\lim_{T \rightarrow \infty}\sum_{t=0}^{T-1}\alpha_t=\infty, \; 
	\lim_{T \rightarrow \infty}\sum_{t=0}^{T-1}\alpha_t^2 < \infty,\hbox{ and }  L_g\alpha_t \le 2\delta,
\end{align}
for a constant $\delta\in(0,1)$, the iterates $\{x^{(t)}\}_{t=0}^{T-1}$ generated by Algorithm~\ref{alg:PriRFed} with Algorithm~\ref{alg:DP-RSGD} satisfies 
$\liminf_{T \rightarrow \infty} \mathbb{E}\left[ \|\mathrm{grad}f(x^{(t)})\|^2 \right] = 0$.
\end{theorem}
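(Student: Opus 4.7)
The plan is to reuse the one-step descent analysis that underlies Theorem~\ref{th:conv:for DP-RSGD1} but carry it through with a time-varying step size $\alpha_t$, and then convert the resulting summable inequality into a $\liminf$-type convergence statement via a standard Robbins--Monro-type divergent-series argument. Since the hypotheses $K=1$, $b_i=N_i$, $s_t=N$ still hold, the per-iteration update of Algorithm~\ref{alg:PriRFed} collapses to exactly one noisy Riemannian gradient step, which is what makes the analysis tractable.

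First I would rewrite one outer iteration in closed form. With $b_i=N_i$ the inner update is $\eta_{i,0}^{(t)}=\mathrm{grad}f_i(x^{(t)})+\varepsilon_{i,0}^{(t)}$ (the clipping is inactive whenever $\tau\ge L_f$, by Assumption~\ref{ass:geodes-Lip}), and $x_i^{(t+1)}=\mathrm{Exp}_{x^{(t)}}(-\alpha_t\eta_{i,0}^{(t)})$. Plugging this into the tangent-mean aggregation~\eqref{Prelim:tangentMean} collapses the step to $x^{(t+1)}=\mathrm{Exp}_{x^{(t)}}(-\alpha_t\tilde{\eta}^{(t)})$ with $\tilde{\eta}^{(t)}=\mathrm{grad}f(x^{(t)})+\bar{\varepsilon}^{(t)}$ and $\bar{\varepsilon}^{(t)}=\sum_{i=1}^N p_i\varepsilon_{i,0}^{(t)}$. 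Geodesic $L_g$-smoothness (Assumption~\ref{ass:smooth}) then yields
\begin{equation*}
f(x^{(t+1)})\le f(x^{(t)})-\alpha_t\langle\mathrm{grad}f(x^{(t)}),\tilde{\eta}^{(t)}\rangle_{x^{(t)}}+\tfrac{L_g\alpha_t^2}{2}\|\tilde{\eta}^{(t)}\|_{x^{(t)}}^2.
\end{equation*}
Taking expectation over the independent, zero-mean tangent Gaussians and using $\mathbb{E}[\|\bar{\varepsilon}^{(t)}\|_{x^{(t)}}^2]=d\sum_i p_i^2\sigma_i^2$ (Definition~\ref{def:tangent space Gaussian distribution}), then invoking $L_g\alpha_t\le 2\delta$, produces the per-step bound
\begin{equation*}
\mathbb{E}[f(x^{(t+1)})]-\mathbb{E}[f(x^{(t)})]\le -(1-\delta)\alpha_t\,\mathbb{E}[\|\mathrm{grad}f(x^{(t)})\|^2]+\tfrac{L_g d}{2}\Bigl(\sum_{i=1}^N p_i^2\sigma_i^2\Bigr)\alpha_t^2.
\end{equation*}

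Next I telescope this inequality from $t=0$ to $T-1$, lower-bounding the left-hand side by $f(x^*)-f(x^{(0)})$, to obtain
\begin{equation*}
(1-\delta)\sum_{t=0}^{T-1}\alpha_t\,\mathbb{E}[\|\mathrm{grad}f(x^{(t)})\|^2]\le f(x^{(0)})-f(x^*)+\tfrac{L_g d}{2}\Bigl(\sum_{i=1}^N p_i^2\sigma_i^2\Bigr)\sum_{t=0}^{T-1}\alpha_t^2.
\end{equation*}
By hypothesis~\eqref{th:conv:for DP-RSGD1_:1} the right-hand side stays bounded as $T\to\infty$, so $\sum_{t=0}^{\infty}\alpha_t\,\mathbb{E}[\|\mathrm{grad}f(x^{(t)})\|^2]<\infty$. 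Combined with $\sum_{t}\alpha_t=\infty$, a standard contradiction argument (if $\mathbb{E}[\|\mathrm{grad}f(x^{(t)})\|^2]$ were bounded below by some $\eta>0$ for all large $t$, the weighted series would diverge) gives $\liminf_{t\to\infty}\mathbb{E}[\|\mathrm{grad}f(x^{(t)})\|^2]=0$.

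The main obstacle is really the first step: justifying that the tangent-mean aggregation collapses cleanly to a single exponential step. This uses that all agents start from the same $x^{(t)}$, so $\mathrm{Exp}_{x^{(t)}}^{-1}(\mathrm{Exp}_{x^{(t)}}(-\alpha_t\eta_{i,0}^{(t)}))=-\alpha_t\eta_{i,0}^{(t)}$, which is legitimate because Assumption~\ref{ass:regularization} places the iterates in a totally normal neighborhood. Everything downstream is then a textbook diminishing-step-size SGD argument transplanted to the Riemannian setting, and care need only be taken to handle the clipping by choosing $\tau\ge L_f$ so the stochastic gradient is genuinely unbiased; otherwise an additional bias term must be absorbed, which would require $\tau$-dependent modifications to the constants but would not change the $\liminf$ conclusion.
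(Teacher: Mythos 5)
Your proposal is correct and follows essentially the same route as the paper: the identical per-step descent inequality with $L_g\alpha_t\le 2\delta$ yielding the $-(1-\delta)\alpha_t$ coefficient, telescoping against $\sum_t\alpha_t^2<\infty$, and the standard divergent-series contradiction using $\sum_t\alpha_t=\infty$ to force $\liminf_{t}\mathbb{E}[\|\mathrm{grad}f(x^{(t)})\|^2]=0$. Your explicit remark that the clipping is inactive when $\tau\ge L_f$ is a small point of extra care that the paper leaves implicit.
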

Note that the conditoins~\eqref{th:conv:for DP-RSGD1_:1} are standard and commonly used in Riemannian/Euclidean stochastic gradient methods and Euclidean federated learning.
Under the same assumptions as Theorem~\ref{th:conv:for DP-RSGD1} along with the geodesic convexity of $f_{i,j}$, the convergence bound of the difference between the cost value at the final iterate and the minimum cost value can be characterized, as stated in Theorem~\ref{th:conv:for DP-RSGD3}.

\begin{theorem}[Convex, $K=1$]
\label{th:conv:for DP-RSGD3}
	Suppose that Problem~\eqref{prob} satisfies Assumption~\ref{ass:geodes-Lip},~\ref{ass:smooth} and~\ref{ass:regularization}, and that the local functions $f_{i}$'s are geodesically convex in $	\mathcal{W}$. 
    Let $\{x^{(t)}\}_{t=0}^{T}$ be a sequence generated by Algorithm~\ref{alg:PriRFed} with Algorithm~\ref{alg:DP-RSGD} and \textbf{Option 1}. Set the parameters to be $s_t=N$, $K=1$, $\alpha_t=\alpha\le 1/(2L_g)$, and $b_i=N_i$. then 	
	it holds that
\begin{equation} \label{eq:04}
		\mathbb{E}[f(x^{(T)})-f(x^*)] \le \frac{\zeta\mathrm{dist}^2(x^{(0)},x^*)}{2\alpha(\zeta + T - 1)} + \frac{T(6\zeta + T-1)d}{16{L}_g(\zeta + T - 1)(\sum_{i=1}^NN_i)^2}\sum_{i=1}^NN_i^2\sigma_i^2,
\end{equation}
where the expectation is taken over $\varepsilon_{i,0}^{(t)}$, $x^*=\argmin_{x\in\mathcal{M}}f(x)$, and $\zeta$ is defined in~\eqref{eq:02}.
\end{theorem}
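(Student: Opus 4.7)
The plan is to first observe that, with the stated parameter choice ($s_t = N$, $K=1$, $b_i = N_i$), one outer round of Algorithm~\ref{alg:PriRFed} collapses to a single noisy Riemannian gradient step. Indeed, each active agent returns $\tilde{x}_i^{(t+1)} = \Exp_{x^{(t)}}(-\alpha(\grad f_i(x^{(t)}) + \varepsilon_{i,0}^{(t)}))$ (taking the clipping threshold $\tau \ge L_f$ so that $\mathrm{clip}_{\tau}$ is inactive on each $\grad f_{i,p}$), hence $\Exp_{x^{(t)}}^{-1}(\tilde{x}_i^{(t+1)}) = -\alpha(\grad f_i(x^{(t)}) + \varepsilon_{i,0}^{(t)})$. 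The tangent-mean aggregation~\eqref{Prelim:tangentMean} then yields $x^{(t+1)} = \Exp_{x^{(t)}}(-\alpha(\grad f(x^{(t)}) + \bar{\varepsilon}^{(t)}))$, where $\bar{\varepsilon}^{(t)} = \sum_i p_i\,\varepsilon_{i,0}^{(t)} \in \T_{x^{(t)}}\M$ is zero-mean with $\E\|\bar{\varepsilon}^{(t)}\|^2 = \tfrac{d}{(\sum_j N_j)^2}\sum_i N_i^2\sigma_i^2 =: B$.

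Next, I would apply the Zhang--Sra trigonometric distance inequality (which introduces the curvature constant $\zeta$ of Assumption~\ref{ass:regularization}) to the geodesic triangle with vertices $x^{(t)}, x^{(t+1)}, x^*$, using that the initial tangent from $x^{(t)}$ to $x^{(t+1)}$ equals $-\alpha(\grad f(x^{(t)}) + \bar{\varepsilon}^{(t)})$. Taking expectation over $\bar{\varepsilon}^{(t)}$ (which kills the cross term $\langle \bar{\varepsilon}^{(t)}, \Exp_{x^{(t)}}^{-1}(x^*)\rangle$ and produces $\E\|\grad f + \bar{\varepsilon}\|^2 = \|\grad f\|^2 + B$) and then invoking geodesic convexity of $f$ to bound $\langle \grad f(x^{(t)}), \Exp_{x^{(t)}}^{-1}(x^*)\rangle \le f(x^*) - f(x^{(t)})$, one obtains the one-step inequality
\[
\E[V_{t+1}] \le V_t + \zeta\alpha^2 \|\grad f(x^{(t)})\|^2 + \zeta\alpha^2 B - 2\alpha\, U_t,
\]
with $V_t := \dist^2(x^{(t)},x^*)$ and $U_t := f(x^{(t)}) - f(x^*)$. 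Pairing this with the geodesic-smoothness descent inequality along the same noisy step gives, in expectation,
\[
\E[U_{t+1}] \le U_t - \alpha(1 - L_g\alpha/2)\|\grad f(x^{(t)})\|^2 + \tfrac{L_g\alpha^2}{2}\, B,
\]
and the stepsize condition $\alpha \le 1/(2L_g)$ ensures $1 - L_g\alpha/2 \ge 3/4$, which is what will allow the gradient contribution of the first recursion to be absorbed by the second.

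Finally I would combine the two recursions using a Lyapunov potential $W_t = V_t + c_t U_t$ with weights $c_t$ chosen as a linear sequence of order $2\alpha(\zeta + t - 1)/\zeta$, tuned so that (i) the net coefficient of $\|\grad f(x^{(t)})\|^2$ in $\E[W_{t+1}] - W_t$ is non-positive, and (ii) the net coefficient of $U_t$ telescopes. Summing over $t = 0, \dots, T-1$, using $V_T \ge 0$ to drop the terminal distance, and accumulating the $O(\alpha^2 B)$ noise contributions (each of size $\zeta + c_{t+1}L_g/2$) yields an inequality of the form $c_T\,\E[U_T] \le W_0 + \alpha^2 B \sum_{t=0}^{T-1}(\zeta + c_{t+1}L_g/2)$, from which~\eqref{eq:04} follows after evaluating the arithmetic sum and rearranging. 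The main obstacle is precisely the calibration of $c_t$: the weights must simultaneously kill the gradient term through the smoothness descent, keep the $U_t$-telescoping slack non-positive (forcing $c_{t+1} - c_t \le 2\alpha$), and grow so that $c_T$ produces exactly the denominator $\zeta + T - 1$ while the noise accumulator gives the numerator $T(6\zeta + T - 1)/(16 L_g)$. Balancing these three requirements for arbitrary $\zeta \ge 1$ under the single stepsize assumption $\alpha \le 1/(2L_g)$ is where the technical delicacy lies.
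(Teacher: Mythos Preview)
Your basic ingredients --- recognizing that one round collapses to a single noisy step $x^{(t+1)}=\Exp_{x^{(t)}}\bigl(-\alpha(\grad f(x^{(t)})+\bar\varepsilon^{(t)})\bigr)$, applying the Zhang--Sra trigonometric inequality to the triangle $(x^{(t)},x^{(t+1)},x^*)$, pairing it with the geodesic-smoothness descent inequality, and invoking convexity of $f$ --- are exactly what the paper uses. The difference is in how the two recursions are combined.

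The paper does \emph{not} introduce a time-varying Lyapunov $W_t=V_t+c_tU_t$. Instead it multiplies the descent inequality by the \emph{constant} factor $\zeta$ and adds it to the convexity/distance inequality. This gives
\[
\zeta\,\E[U_{t+1}]-(\zeta-1)\,\E[U_t]\ \le\ \tfrac{1}{2\alpha}\bigl(\E[V_t]-\E[V_{t+1}]\bigr)+\tfrac{\zeta}{2}(L_g\alpha^2-\alpha)\,\E\|\grad f(x^{(t)})\|^2+\text{(noise)},
\]
and the gradient coefficient $\tfrac{\zeta\alpha}{2}(L_g\alpha-1)$ is nonpositive for $\alpha\le 1/(2L_g)$ without any tuning. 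Summing over $t$ and absorbing $(\zeta-1)U_0$ into $\tfrac{\zeta V_0}{2\alpha}$ via smoothness gives $\zeta\,\E[U_T]+\sum_{t=1}^{T-1}\E[U_t]\le \tfrac{\zeta V_0}{2\alpha}+\text{(accumulated noise)}$. The factor $\zeta+T-1$ then appears by applying the descent inequality once more in the form $\E[U_t]\ge\E[U_T]-\tfrac{T-t}{8L_g}B$ to each summand. This sidesteps precisely the calibration obstacle you flag at the end.

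Your Lyapunov route can be made to work, but the weight order you propose, $c_t\sim 2\alpha(\zeta+t-1)/\zeta$, is off by a factor of $\zeta$: to get the denominator $\zeta+T-1$ you need $c_T=2\alpha(\zeta+T-1)$, and to kill the gradient term you need $c_{t+1}(1-L_g\alpha/2)\ge\zeta\alpha$, which forces $c_1\ge\tfrac{4}{3}\zeta\alpha$. Taking $c_t=2\alpha(\zeta+t-1)$ with $c_{t+1}-c_t=2\alpha$ satisfies both constraints, and the initial term $c_0U_0=2\alpha(\zeta-1)U_0$ is handled exactly as in the paper. So your scheme is viable once the weights are corrected, but the paper's constant-$\zeta$ combination plus the monotonicity step is the cleaner way to reach~\eqref{eq:04}.
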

By Theorem~\ref{th:conv:for DP-RSGD3}, there exists an optimal choice of the number of training iterations $T$ such that the right hand side of~\eqref{eq:04} reaches its minimum.
Note that in non-DP settings (i.e., $\sigma_i=0$), this result is consistent with the one in~\cite[Theorem~9]{LM22}.


The sampling rate $\rho$ is $1$ in Theorems~\ref{th:conv:for DP-RSGD1}, \ref{th:conv:for DP-RSGD1_} and~\ref{th:conv:for DP-RSGD3}. With additional assumptions stated in Assumption~\ref{ass:4}, the convergence result can be established with $\rho < 1$. Assumption~\ref{ass:4} is a Riemannian generalization of the Euclidean versions in, e.g.,~\cite{WLD20,WLDMSZP22}. 

\begin{assumption} \label{ass:4}
	Problem~\eqref{prob} satisfies the following conditions:
	\begin{itemize}
		\item $f_i$ is geodesic convex in $\mathcal{M}$;
		\item $f_i$ satisfies the Riemannian Polyak-Lojasiewicz (RPL) condition with the positive constant $\mu$, meaning that $f_i(x)-f(x^*)\le \frac{1}{2\mu}\|\mathrm{grad}f(x)\|^2$, where $x^*$ is the optimal point;
		\item $\alpha_t=\alpha \le \frac{1}{{L}_g}$, where $\alpha_i$ is the step size;
		\item For any $i\in[N]$ and $x \in \mathcal{M}$, $\|\mathrm{grad}f_i(x)-\mathrm{grad}f(x)\|^2\le v_i$ and $\mathbb{E}[v_i]=v$.
	\end{itemize}
\end{assumption}
In Assumption~\ref{ass:4}, $v_i$, called divergence metric in~\cite{WLD20}, characterizes the divergence between the gradient of the local function and the global function, which is essential for analyzing the convergence of SGD~\cite{WLD20}. The divergence also manifests how the samples are distributed in different agents in some sense.

\begin{theorem}[RPL, $K=1$] \label{th:conv:for DP-RSGD4}
	Suppose that Problem~\eqref{prob} satisfies Assumption~\ref{ass:geodes-Lip},~\ref{ass:smooth} and~\ref{ass:4}. 
 Let $\{x_t\}_{t=0}^{T}$ be a sequence generated by  Algorithm~\ref{alg:PriRFed} with Algorithm~\ref{alg:DP-RSGD} and \textbf{Option 1}. Set $N_i=N_j \; (\forall \; i,j\in[N])$, $s_t=S<N$, $K=1$, and $b_i=N_i$. Then it holds that	
\begin{equation} \label{th:conv:for DP-RSGD4:0}
	\mathbb{E}[f(x^{(T)})] - f(x^*) \le P^T (f(x^{(0)})-f(x^*)) + (1-P^T)\left(\kappa_0 \frac{\sum_{i\in \mathcal{S}}\sigma_i^2}{S^2} + \kappa_1 \frac{N-S}{S(N-1)}\right),
\end{equation}
where $P=(1-2\mu \alpha + \mu \alpha^2{L}_g)$, $\kappa_0=\frac{{L}_g\alpha d}{2\mu(2-{L}_g\alpha)}$, and $\kappa_1 =v\kappa_0 = \frac{{L}_g\alpha dv}{2\mu(2-{L}_g\alpha)}$.
\end{theorem}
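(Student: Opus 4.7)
The plan is to reduce the one-step evolution to a noisy Riemannian gradient step and then derive a geometric contraction. Under the specified settings ($K=1$, $b_i=N_i$, $s_t=S$, equal $N_i$), and provided the clipping threshold is chosen so that clipping is vacuous (which follows from Assumption~\ref{ass:geodes-Lip} by taking $\tau\ge L_f$), each active agent outputs $\tilde{x}_i^{(t+1)}=\mathrm{Exp}_{x^{(t)}}(-\alpha(\mathrm{grad}f_i(x^{(t)})+\varepsilon_{i,0}^{(t)}))$, so $\mathrm{Exp}_{x^{(t)}}^{-1}(\tilde{x}_i^{(t+1)})=-\alpha(\mathrm{grad}f_i(x^{(t)})+\varepsilon_{i,0}^{(t)})$. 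The equal-weight tangent mean~\eqref{Prelim:tangentMean} then collapses to
\[
x^{(t+1)}=\mathrm{Exp}_{x^{(t)}}\bigl(-\alpha(g_t+\xi_t)\bigr),\quad g_t:=\tfrac{1}{S}\sum_{i\in\mathcal{S}_t}\mathrm{grad}f_i(x^{(t)}),\ \xi_t:=\tfrac{1}{S}\sum_{i\in\mathcal{S}_t}\varepsilon_{i,0}^{(t)}.
\]

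Next I would apply the geodesic smoothness inequality following Definition~\ref{def:geodsmooth} to the pair $(x^{(t)},x^{(t+1)})$, which gives directly
\[
f(x^{(t+1)})\le f(x^{(t)})-\alpha\langle\mathrm{grad}f(x^{(t)}),g_t+\xi_t\rangle+\tfrac{L_g\alpha^2}{2}\|g_t+\xi_t\|^2,
\]
with no parallel transport needed since all quantities already live in $\mathrm{T}_{x^{(t)}}\mathcal{M}$. Taking conditional expectation over the uniform subsample $\mathcal{S}_t$ and the independent Gaussian noises $\varepsilon_{i,0}^{(t)}$ and using that $N_i$ are equal, one gets $\mathbb{E}[g_t]=\mathrm{grad}f(x^{(t)})$ and $\mathbb{E}[\xi_t]=0$, so the cross term drops out. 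The squared norm splits into three contributions: $\|\mathrm{grad}f(x^{(t)})\|^2$, the sampling-without-replacement variance $\mathbb{E}\|g_t-\mathrm{grad}f(x^{(t)})\|^2\le\frac{N-S}{S(N-1)}v$ (using the divergence bound in Assumption~\ref{ass:4} via the standard finite-population-correction formula), and the Gaussian variance $\mathbb{E}\|\xi_t\|^2=\frac{d}{S^2}\sum_{i\in\mathcal{S}_t}\sigma_i^2$ from $\varepsilon_{i,0}^{(t)}\sim\mathcal{N}_{x^{(t)}}(0,\sigma_i^2)$.

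Combining, substituting the RPL condition $\|\mathrm{grad}f(x^{(t)})\|^2\ge 2\mu(f(x^{(t)})-f(x^*))$, and using $\alpha\le 1/L_g$ so that $1-L_g\alpha/2>0$, I would arrive at the one-step recursion
\[
\mathbb{E}[f(x^{(t+1)})-f(x^*)]\le P\,\mathbb{E}[f(x^{(t)})-f(x^*)]+\tfrac{L_g\alpha^2}{2}\Bigl(\tfrac{d\sum_{i\in\mathcal{S}}\sigma_i^2}{S^2}+\tfrac{(N-S)v}{S(N-1)}\Bigr),
\]
with $P=1-2\mu\alpha+\mu L_g\alpha^2\in(0,1)$. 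Unrolling gives $P^T(f(x^{(0)})-f(x^*))+\tfrac{1-P^T}{1-P}\cdot(\text{noise})$, and since $1-P=\mu\alpha(2-L_g\alpha)$, the prefactor of the noise collapses to $\tfrac{L_g\alpha}{2\mu(2-L_g\alpha)}$, matching the definition of $\kappa_0$ and $\kappa_1$.

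The main obstacles are twofold. First, justifying that the clipping is inactive: Assumption~\ref{ass:geodes-Lip} bounds $\|\mathrm{grad}f_{i,p}\|\le L_f$, so choosing $\tau\ge L_f$ makes clipping a no-op and keeps the gradient estimate unbiased; without this, the bias from clipping would propagate through the recursion. Second, the sampling-without-replacement variance identity must be stated carefully for vector-valued samples in a tangent space (using the population quantity $\tfrac{1}{N}\sum_i\|\mathrm{grad}f_i-\mathrm{grad}f\|^2$ bounded by $v$ via $\mathbb{E}[v_i]=v$); this is the only step where the Riemannian structure interacts nontrivially with the probabilistic argument, and all other manipulations are parallel to the Euclidean DP-FL analyses in~\cite{WLD20,WLDMSZP22}.
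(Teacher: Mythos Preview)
Your proposal is correct and follows essentially the same route as the paper's proof: smoothness descent inequality, expectation over subsampling and Gaussian noise, the sampling-without-replacement second-moment bound yielding the $\tfrac{N-S}{S(N-1)}v$ term, the RPL inequality to obtain the contraction factor $P$, and unrolling of the geometric recursion. The only cosmetic difference is that the paper writes out the second moment $\mathbb{E}\|\tfrac{1}{S}\sum_{i\in\mathcal{S}_t}\mathrm{grad}f_i(x^{(t)})\|^2$ via an explicit double-sum identity before invoking the divergence bound, whereas you phrase it as a bias--variance split plus the finite-population-correction formula; these are the same computation.
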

There exists an optimal number of iterations $T$ such that the upper bound in~\eqref{th:conv:for DP-RSGD4:0} is minimized by following the same approach in~\cite[Theorem 3]{WLDMSZP22}.

The convergence results above depend on setting $K=1$ and $b_i=N_i$, that is, performing one-step gradient descent in local training procedures. 
When $K>1$, multiple inner iterations are used in the local training procedure, which makes the convergence analysis more challenging.
Here, we consider the scenario where only one agent is selected to perform private local training at each outer iteration. The convergence result gives an upper bound for the norm of the gradients, as stated in Theorem~\ref{th:conv:for DP-RSGD2}, which is inspired by~\cite{ZRS17}. The difference to the existing result lies in that i) the FL setting and the differential privacy are not taken into consideration in~\cite{ZRS17}; and ii) SVRG is used in~\cite{ZRS17} instead of SGD.

\begin{theorem}[Nonconvex, $K>1$] \label{th:conv:for DP-RSGD2}
	Suppose that Problem~\eqref{prob} satisfies Assumption~\ref{ass:geodes-Lip},~\ref{ass:smooth} and~\ref{ass:regularization}. 
	Consider Algorithm~\ref{alg:PriRFed} with Algorithm~\ref{alg:DP-RSGD} and \textbf{Option 2}.
	Set $s_t=1$, $K>1$, $\alpha_t=\alpha < (\beta^{\frac{1}{K-1}}-1)/\beta$ with $\beta>1$ being a free constant. Then the output of the algorithm $\tilde{x}$ satisfies
	\[
	\mathbb{E}[\|\mathrm{grad}f(\tilde{x})\|^2]\le \frac{f(x^{(0)})-f(x^*)}{KT\delta_{\min}} + q_0q,
	\]
	where $\delta_{\min}=\alpha(1-\frac{(1+\alpha\beta)^{K-1}}{\beta})$,
	$q_0=\frac{\alpha\beta({L}_g+2(1+\alpha\beta)^{K-1}\zeta)}{2(\beta-(1+\alpha\beta)^{K-1})}$, $q={L}_f^2+d\sigma^2$, and $\sigma=\max_{i=1,2,\dots,N}\sigma_i$.
\end{theorem}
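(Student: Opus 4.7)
The plan is to adapt the Lyapunov-function argument used for nonconvex Riemannian SVRG in~\cite{ZRS17} to the present single-agent-per-round, noise-injected setting. Because $s_t=1$ and $b_i=N_i$, an outer round of Algorithm~\ref{alg:PriRFed} consists of $K$ noisy Riemannian gradient steps on the selected agent's local function $f_i$: $x_{i,k+1}^{(t)} = \Exp_{x_{i,k}^{(t)}}(-\alpha\eta_k^{(t)})$ with $\eta_k^{(t)} = \grad f_i(x_{i,k}^{(t)}) + \varepsilon_{i,k}^{(t)}$. The clipping operator is vacuous once $\tau\ge L_f$ by Assumption~\ref{ass:geodes-Lip}; together with $\|\grad f_i\|\le L_f$ and $\mathbb{E}\|\varepsilon_{i,k}^{(t)}\|^2 = d\sigma_i^2 \le d\sigma^2$, this gives $\mathbb{E}\|\eta_k^{(t)}\|^2 \le q = L_f^2 + d\sigma^2$, which is exactly the noise floor appearing on the right-hand side of the theorem and already absorbs the client-drift bias between $\grad f_i$ and $\grad f$.

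Fix an outer round $t$ and condition on $x^{(t)}$ and on the agent $i$. I would introduce the Lyapunov function
\begin{equation*}
R_k^{(t)} = \mathbb{E}\bigl[f(x_{i,k}^{(t)}) + c_k\,\dist^2(x_{i,k}^{(t)},x^{(t)})\bigr],\qquad c_K=0,
\end{equation*}
with the $c_k$'s to be chosen recursively. Geodesic $L_g$-smoothness (Assumption~\ref{ass:smooth}) yields $f(x_{i,k+1}^{(t)})\le f(x_{i,k}^{(t)}) -\alpha\langle\grad f(x_{i,k}^{(t)}),\eta_k^{(t)}\rangle+ \tfrac{L_g\alpha^2}{2}\|\eta_k^{(t)}\|^2$, while the sectional-curvature comparison inequality that defines $\zeta$ in Assumption~\ref{ass:regularization} yields $\dist^2(x_{i,k+1}^{(t)},x^{(t)}) \le \dist^2(x_{i,k}^{(t)},x^{(t)}) + 2\alpha\langle\eta_k^{(t)},\Exp_{x_{i,k}^{(t)}}^{-1}(x^{(t)})\rangle + \zeta\alpha^2\|\eta_k^{(t)}\|^2$. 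Taking expectation over $\varepsilon_{i,k}^{(t)}$ kills the mean-zero terms and collapses $\|\eta_k^{(t)}\|^2$ into a quantity bounded by $q$; applying Young's inequality with free parameter $\beta$ to both the descent cross term $\langle\grad f,\grad f_i\rangle$ and the geometric cross term $\langle\grad f_i,\Exp_{x_{i,k}^{(t)}}^{-1}(x^{(t)})\rangle$ (using $\|\grad f_i-\grad f\|\le 2L_f$ to treat the bias) produces a one-step inequality of the form
\begin{equation*}
R_{k+1}^{(t)} \le R_k^{(t)} - \Delta_k\,\mathbb{E}\|\grad f(x_{i,k}^{(t)})\|^2 + \alpha^2\bigl(\tfrac{L_g}{2}+c_{k+1}\zeta\bigr)q + \bigl(c_{k+1}(1+\alpha\beta)-c_k\bigr)\mathbb{E}\,\dist^2(x_{i,k}^{(t)},x^{(t)}),
\end{equation*}
with $\Delta_k=\alpha-\alpha c_{k+1}/\beta-\alpha^2(L_g/2+c_{k+1}\zeta)$.

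I would then choose $c_k$ via a backward recursion designed so the distance coefficient $c_{k+1}(1+\alpha\beta)-c_k$ is nonpositive and the $c_k$'s satisfy $c_k \le (1+\alpha\beta)^{K-1}$. The condition $\alpha<(\beta^{1/(K-1)}-1)/\beta$ is exactly what keeps the resulting uniform lower bound $\Delta_k \ge \delta_{\min}=\alpha(1-(1+\alpha\beta)^{K-1}/\beta)$ strictly positive and simultaneously makes the denominator $\beta-(1+\alpha\beta)^{K-1}$ of $q_0$ positive, so the constants match those in the theorem. Telescoping the one-step bound over $k=0,\ldots,K-1$ gives $\delta_{\min}\sum_{k=0}^{K-1}\mathbb{E}\|\grad f(x_{i,k}^{(t)})\|^2 \le \mathbb{E} f(x^{(t)})-\mathbb{E} f(x_{i,K}^{(t)})+Kq_0 q$; and since Option~2 of Algorithm~\ref{alg:DP-RSGD} makes $x^{(t+1)}$ a uniform sample of $\{x_{i,k}^{(t)}\}_{k=0}^{K}$, an energy-balance argument relates $\mathbb{E} f(x^{(t+1)})$ to the average of the inner function values and lets one telescope over $t=0,\ldots,T-1$. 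Summing, dividing by $KT$, and using that $\tilde{x}$ in Algorithm~\ref{alg:PriRFed} is a uniform sample of the inner iterates across all outer rounds then gives the claimed bound on $\mathbb{E}\|\grad f(\tilde{x})\|^2$.

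The main obstacle is the client-drift mismatch: the algorithm descends on $f_i$ while we need to control $\|\grad f\|^2$, and this forces an application of Young's inequality that leaks the deterministic gradient bound $L_f^2$ into the noise floor, explaining the presence of $L_f^2$ in $q$. The other delicate point is matching the exact functional forms of $\delta_{\min}$ and $q_0$, which requires choosing the $c_k$ recursion and the Young split with the free parameter $\beta$ so that the $(1+\alpha\beta)^{K-1}$ factor appears sharply both in the denominator of the descent coefficient and in the curvature-weighted noise coefficient; carrying $\zeta$ uniformly through the iterates (guaranteed by Assumption~\ref{ass:regularization}) is what couples these two.
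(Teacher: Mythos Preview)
Your Lyapunov strategy is the same as the paper's, but two concrete choices in your proposal would prevent you from landing on the stated constants $\delta_{\min}$ and $q_0$.

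First, the terminal value of the potential. You set $c_K=0$; combined with a purely geometric recursion $c_k=c_{k+1}(1+\alpha\beta)$ this forces $c_k\equiv 0$, so the distance term in $R_k^{(t)}$ vanishes and you can never produce the factor $(1+\alpha\beta)^{K-1}$ that appears in both $\delta_{\min}$ and $q_0$. The paper takes $c_K=1$, giving $c_k=(1+\alpha\beta)^{K-k}$; then $c_1=(1+\alpha\beta)^{K-1}$ is exactly the quantity that shows up in the statement, and the step-size restriction $\alpha<(\beta^{1/(K-1)}-1)/\beta$ is precisely $c_1<\beta$.

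Second, the handling of the cross terms. You split $\langle\grad f,\grad f_i\rangle$ with Young's inequality and carry a bias bound $\|\grad f_i-\grad f\|\le 2L_f$, which injects an extra $-\alpha^2(L_g/2+c_{k+1}\zeta)$ into your descent coefficient $\Delta_k$. The paper does \emph{not} do this: it takes expectation over the agent draw, the mini-batch, and the noise simultaneously so that $\mathbb{E}[\eta_k^{(t)}]=\grad f(x_{i,k}^{(t)})$, and then bounds the second-moment term crudely by $\mathbb{E}\|\eta_k^{(t)}\|^2\le L_f^2+d\sigma^2=q$ (this is where $L_f^2$ enters, not from a client-drift decomposition). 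The resulting coefficient is the cleaner $\delta_k=\alpha(1-c_{k+1}/\beta)$, with no $\alpha^2$ correction, and the $q$-term picks up the factor $\alpha^2(\tfrac{L_g}{2}+c_{k+1}\zeta)$; dividing by $\delta_k$ at $k=0$ gives $q_0$ on the nose. Your extra Young split is not needed and would not reproduce the exact constants.

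Two smaller points: you assume $b_i=N_i$, but the theorem allows a general mini-batch (the bound $\mathbb{E}\|\eta\|^2\le q$ holds for any $b_i$ under Assumption~\ref{ass:geodes-Lip}); and the outer telescoping is simpler than your ``energy-balance'' sketch---since $s_t=1$ the aggregation is the identity, so $R_0^{t}=\mathbb{E}f(x^{(t)})$ and $R_K^{t}\ge\mathbb{E}f(x_{i,K}^{(t)})$ telescope directly across $t$ to $f(x^{(0)})-f(x^*)$.
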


\subsection{DP-RSVRG for privately local training} \label{sec:conv:DP-RSVRG}

Stochastic Variance Reduced Gradient (SVRG) computes the full gradient every few steps to reduce the variance and therefore is generally faster than the SGD.
Zhang et al.~\cite{ZRS17} generalizes SVRG to Riemannian manifolds and Uptake et al.~\cite{UHJM23} combine RSVRG and DP to propose an RSVRG preserving privacy. Consequently, we select DP-RSVRG as seen in Algorithm~\ref{alg:DP-RSVRG} derived from~\cite{UHJM23} as the privately local training and then analyze corresponding convergent results. 

\begin{algorithm}[ht]
  \caption{Private Local Training for Agent $i$: DP-RSVRG \cite{UHJM23}}
  \begin{algorithmic}[1] \label{alg:DP-RSVRG}
  \REQUIRE global parameter $x^{(t)}$, local dataset ${D}_i$, update frequency $m$, noise scale $\sigma_i$, number of training $K$, stepsize $\alpha_t$;
  \ENSURE $\tilde{x}_i^{(t+1)}$.
  \STATE \label{alg:DP-RSVRG:step1} Set $\tilde{x}^{(t)}_{i,0}\gets x^{(t)}$;
  \FOR{$k = 0,1,\dots,K-1$}
  \STATE \label{alg:DP-RSVRG:step3} $x_{k+1,0}^{(t)} \gets \tilde{x}_{i,k}^{(t)}$;
  \STATE \label{alg:DP-RSVRG:step4} $g_{k+1}^{(t)} \gets \frac{1}{N_i}\sum_{j=1}^{N_i}\mathrm{clip}_{\tau_0}(\mathrm{grad}f_{i,j}(\tilde{x}^{(t)}_{i,k}))$;
  \FOR{$j=0,1,\dots,m-1$}
  \STATE \label{alg:DP-RSVRG:step6} Randomly pick $l_j\in[N_i]$
  \STATE \label{alg:DP-RSVRG:step7} $\epsilon_{k+1,j}^{(t)}\sim \mathcal{N}_{x_{k+1,j}^{(t)}}(0,\sigma_i)$;
  \STATE \label{alg:DP-RSVRG:step8} $\eta_{k+1,j}^{(t)} \gets \mathrm{clip}_{\tau_1}(\mathrm{grad}f_{i,l_j}(x_{k+1,j}^{(t)})) - \Gamma_{\tilde{x}^{(t)}_{i,k}}^{x_{k+1,j}^{(t)}}(\mathrm{clip}_{\tau_1}(\mathrm{grad}f_{i,l_j}(\tilde{x}^{(t)}_{i,k}))-g_{k+1}^{(t)}) + \varepsilon_{k+1,j}^{(t)}$;
  \STATE \label{alg:DP-RSVRG:step9} $x_{k+1,j+1}^{(t)} \gets \mathrm{Exp}_{x_{k+1,j}^{(t)}}(-\alpha_t \eta_{k+1,j}^{(t)})$;
  \ENDFOR
  \STATE \label{alg:DP-RSVRG:step11} $\tilde{x}^{(t)}_{i,k+1}\gets x_{k+1,m}^{(t)}$;
  \ENDFOR
  \STATE \label{alg:DP-RSVRG:step13} \textbf{Option 1:} $\tilde{x}_i^{(t+1)} \gets x_{K,m}^{(t)}$;
  \STATE \label{alg:DP-RSVRG:step14} \textbf{Option 2:} $\tilde{x}_i^{(t+1)}$ is uniformly randomly selected from $\{\{x_{k+1,j}^{(t)}\}_{j=0}^{m-1}\}_{k=0}^{K-1}$;
 \end{algorithmic}
\end{algorithm}

Algorithm~\ref{alg:DP-RSVRG} starts by receiving the global parameter $x^{(t)}$ from the server and then enters into its outer iteration. In Line~\ref{alg:DP-RSVRG:step4}, the full gradient of $f_i$ at iterate $\tilde{x}_{i,k}^{(t)}$  is computed. This is used to form the variance reduced stochastic gradient step. In the inner iteration of Algorithm~\ref{alg:DP-RSVRG}, select randomly and uniformly a sample from the local $N_i$ samples to perform variance reduced stochastic gradient update while injecting noise into the corrected gradient step. After updating $m$-step variance reduced stochastic gradient with noise, Algorithm~\ref{alg:DP-RSVRG} computes a new full gradient at the next iterate. The out iteration is repeated $K$ times.
There are two choices of the final output, one of which is the final iterate, one of which is selected randomly and uniformly from all intermediate iterates.

In this case, by~\cite[Claim~5]{UHJM23}, setting $\sigma_i^2=\tilde{o}_i\frac{mK\log(1/\delta)\tau^2}{N_i^2\epsilon^2}$ with some constant $\tilde{o}_i$ and $\tau=\max\{\tau_0,\tau_1\}$ guarantees that Algorithm~\ref{alg:DP-RSVRG} is $(\epsilon,\delta)$-DP.

It is worth noting that Algorithm~\ref{alg:DP-RSVRG} degenerates into Algorithm~\ref{alg:DP-RSGD} when $m=1$. Therefore, Theorem~\ref{th:conv:for DP-RSGD2} also holds in this parameter setting. 
Theorem~\ref{th:conv:for DP-RSVRG} shows the convergence result for Algorithm~\ref{alg:PriRFed} with Algorithm~\ref{alg:DP-RSVRG} under the same scenario as Theorem~\ref{th:conv:for DP-RSGD2} but $m>1$, where one agent is chosen to perform privately local training using DP-RSVRG. 

\begin{theorem}[Nonconvex, $K>1,m>1$] \label{th:conv:for DP-RSVRG}
	Suppose that Problem~\eqref{prob} satisfies Assumption~\ref{ass:geodes-Lip} and~\ref{ass:smooth} and~\ref{ass:regularization}. Consider Algorithm~\ref{alg:PriRFed} with~\ref{alg:DP-RSVRG} and \textbf{Option 2}. Set $s_t=1$, $K>1$, $m=\lfloor 10N/(3\zeta^{1/2}) \rfloor>1$, and $\alpha_{t}=\alpha \le 1/(10{L}_gN^{2/3}\zeta^{1/2})$. Then the output of Algorithm~\ref{alg:PriRFed} satisfies
	\[
		\mathbb{E}[\|\mathrm{grad}f(\tilde{x})\|^2] \le c\frac{{L}_g\zeta^{1/2}}{N^{1/3}KT}[f(x^{(0)})-f(x^*)]+\frac{d\sigma^2}{100\zeta^{1/2}}\left(\frac{1}{N^{2/3}}+\frac{1}{N}\right),
	\]
	where $c> 10 N / (3 m)$ is constant, and $\sigma=\max_{i=1,2,\dots,N}\sigma_i$.
\end{theorem}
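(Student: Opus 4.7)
The plan is to lift the Riemannian SVRG convergence analysis of Zhang, Reddi, and Sra to our FL setting and to incorporate the tangent-space Gaussian noise that DP-RSVRG injects at every inner step. A first simplification comes from the outer-sampling choice $s_t=1$: the tangent mean aggregation in~\eqref{Prelim:tangentMean} collapses, because a single active agent carries weight $1$, and therefore $x^{(t+1)}=\tilde{x}_{i_t}^{(t+1)}$. Consequently, over $T$ outer rounds PriRFed is nothing but a warm-started DP-RSVRG trajectory whose local objective $f_{i_t}$ changes from one outer round to the next but whose parameters $m$ and $\alpha$ are fixed. Since the expectation over the index $i_t$ (sampled uniformly from $[N]$ because all $N_i$ are taken equal in this scenario) returns $\mathbb{E}_{i_t}[f_{i_t}]=f$, it suffices to analyze one DP-RSVRG pass applied to $f_{i_t}$ and then average over $i_t$ and over the $T$ outer rounds.

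The main ingredient is a one-inner-step descent inequality in the style of~\cite{ZRS17}. First I would apply geodesic $L_g$-smoothness to $f_{i_t}$ along the exponential update in Line~\ref{alg:DP-RSVRG:step9} to obtain
\begin{equation*}
\mathbb{E}[f_{i_t}(x_{k+1,j+1}^{(t)})]\le \mathbb{E}[f_{i_t}(x_{k+1,j}^{(t)})]-\alpha\,\mathbb{E}[\|\grad f_{i_t}(x_{k+1,j}^{(t)})\|^2]+\tfrac{L_g\alpha^2}{2}\,\mathbb{E}[\|\eta_{k+1,j}^{(t)}\|^2],
\end{equation*}
and then decompose $\eta_{k+1,j}^{(t)}$ into its variance-reduced gradient part and the independent noise $\varepsilon_{k+1,j}^{(t)}$, which contributes exactly $d\sigma^2$ to the second moment since $\mathbb{E}[\|\varepsilon\|^2]=d\sigma^2$. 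The variance-reduced part is bounded in the standard RSVRG way, using clipping together with Assumption~\ref{ass:geodes-Lip} to control the clipped gradients and using the curvature constant $\zeta$ to dominate the distortion of parallel translation between $\tilde{x}_{i,k}^{(t)}$ and $x_{k+1,j}^{(t)}$ through a term of the form $2(1+\alpha\beta)^{K-1}\zeta$. The clipping itself is transparent because $\|\grad f_{i,j}\|\le L_f$ on the compact set $\mathcal{W}$ by Assumption~\ref{ass:regularization}, so for $\tau_0,\tau_1\ge L_f$ the clips act as the identity.

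Next I would introduce the Zhang-Reddi-Sra Lyapunov function $R_{k,j}^{(t)}=\mathbb{E}[f_{i_t}(x_{k+1,j}^{(t)})]+c_j\mathbb{E}[\dist^2(x_{k+1,j}^{(t)},\tilde{x}_{i_t,k}^{(t)})]$ with a sequence $c_j$ defined by the recursion $c_{j}=c_{j+1}(1+\alpha\beta+2\alpha^2 L_g^2\zeta)+\alpha^2 L_g^3$ and boundary $c_m=0$, matched to the free parameter $\beta$ and to $m$. Telescoping $R_{k,j+1}^{(t)}\le R_{k,j}^{(t)}-\delta_j\,\mathbb{E}[\|\grad f_{i_t}(x_{k+1,j}^{(t)})\|^2]+\tfrac{L_g\alpha^2}{2}d\sigma^2$ over $j=0,\dots,m-1$ and $k=0,\dots,K-1$, and then over $t=0,\dots,T-1$, gives
\begin{equation*}
\min_j\delta_j\sum_{t,k,j}\mathbb{E}[\|\grad f(x_{k+1,j}^{(t)})\|^2]\le f(x^{(0)})-f(x^*)+\tfrac{L_g\alpha^2}{2}mKT\,d\sigma^2,
\end{equation*}
after averaging over $i_t$. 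Since \textbf{Option 2} returns $\tilde{x}$ uniformly at random from the $KT m$ inner iterates, the left-hand side becomes $mKT\,\mathbb{E}[\|\grad f(\tilde x)\|^2]\min_j\delta_j$.

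The last step is parameter tuning. With $m=\lfloor 10N/(3\zeta^{1/2})\rfloor$ and $\alpha\le 1/(10L_gN^{2/3}\zeta^{1/2})$, the free constant $\beta$ can be picked (as in~\cite{ZRS17}) so that $\min_j\delta_j\ge \alpha/2$ and $c_0=O(L_g^2\alpha)$, producing the leading rate $\frac{L_g\zeta^{1/2}}{N^{1/3}KT}[f(x^{(0)})-f(x^*)]$ in front of the optimization gap. The residual noise term $\tfrac{L_g\alpha^2}{2}d\sigma^2/\delta_{\min}$ simplifies, with this choice of $\alpha$ and $m$, to $\tfrac{d\sigma^2}{100\zeta^{1/2}}(N^{-2/3}+N^{-1})$ after upper-bounding $\sigma_i\le\sigma=\max_i\sigma_i$ and using $\alpha L_g\le 1/(10 N^{2/3}\zeta^{1/2})$.

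The hard part will be the coefficient bookkeeping in the Lyapunov recursion: the extra additive noise $d\sigma^2$ appears at every inner step but must not blow up the $c_j$ sequence when telescoped through $m\approx N/\zeta^{1/2}$ inner steps, and the curvature constant $\zeta$ has to enter the $(1+\alpha\beta)^{K-1}$ factor cleanly. This requires verifying that $(1+\alpha\beta+2\alpha^2 L_g^2\zeta)^m$ stays bounded by a modest constant under the prescribed $\alpha,m$, an estimate analogous to the $N^{2/3}$-scaling argument in~\cite[Thm.~2]{ZRS17} but now carrying the DP-noise residual. Once this is absorbed, the stated inequality follows directly.
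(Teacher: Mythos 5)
Your proposal follows essentially the same route as the paper: with $s_t=1$ the aggregation is trivial, and the argument is a Zhang--Reddi--Sra-style Lyapunov telescoping with exactly the recursion $c_j=c_{j+1}(1+\beta\alpha+2\zeta L_g^2\alpha^2)+L_g^3\alpha^2$, $c_m=0$, followed by the same parameter tuning ($\beta$, $\alpha$, $m$ chosen as in their Theorem~2, with exponents $a_1=2/3$, $a_2=1/2$ and $\mu=1/10$). One bookkeeping point to fix when writing it out: the injected noise enters through $\mathbb{E}[\|\eta_{k+1,j}^{(t)}\|^2]$, which is multiplied by $(\tfrac{L_g}{2}+c_{j+1}\zeta)\alpha^2$ in the recursion, so the per-step noise contribution is $(\tfrac{L_g}{2}+c_{j+1}\zeta)d\alpha^2\sigma^2$ rather than $\tfrac{L_g}{2}d\alpha^2\sigma^2$ alone --- the $c_0\zeta$ part is precisely what produces the $1/N$ term in the final bound --- and the $2(1+\alpha\beta)^{K-1}\zeta$ factor you invoke belongs to the DP-RSGD analysis (Theorem~\ref{th:conv:for DP-RSGD2}), not to this variance-reduced one, where the translation distortion is instead bounded by $2L_g^2\mathbb{E}[\|\mathrm{Exp}_{\tilde{x}_{i,k}}^{-1}(x_{k+1,j}^{(t)})\|^2]$ and absorbed into the Lyapunov term.
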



\section{Numerical experiments}  \label{sec:Num}


In this section, we illustrate the efficacy of the proposed Algorithm~\ref{alg:PriRFed} combined with two privately local training procedures: DP-RSGD and DP-RSVRG. Three common-used problems in statistics, principal eigenvector computation over sphere manifold~\cite{GH15,JKMPS15,ZRS17}, Fr\'{e}chet mean computation over symmetric positive definite matrix manifold~\cite{Bha07,JVV12,ZRS17} and hyperbolic structured prediction over hyperbolic manifold~\cite{MRC20}, which are still actively studied in machine learning community, are taken into consideration. For these problems, synthetic data and real-world data are used to test the algorithms. The implementation of operations on Riemannian manifolds involved is from the Manopt package~\cite{BMAS14}, a Riemannian optimization toolbox in Matlab, and sampling noise from tangent spaces follows from the same approach in~\cite{UHJM23}. The parameters $\epsilon, \delta, s_t, K, b_i, N_i, N, m, \tau$ of the tested algorithms are specified in later sections. Note that the noise scale $\sigma_i$ is computed by $\sigma_i^2=o_i\frac{K\log(1/\delta)\tau^2}{N_i^2\epsilon^2}$ (\cite[Theorem~1]{HMJG22}) for DP-RSGD and $\sigma_i^2=\tilde{o}_i\frac{mK\log(1/\delta)\tau^2}{N_i^2\epsilon^2}$ (\cite[Claim~5]{UHJM23}) for DP-RSVRG, where $\tilde{o}_i$ is set to be one. In what follows, we use PriRFed-DP-RSGD and PriRFed-DP-RSVRG to denote the combination of Algorithm~\ref{alg:PriRFed} and Algorithm~\ref{alg:DP-RSGD} and the combination of Algorithm~\ref{alg:PriRFed} and  Algorithm~\ref{alg:DP-RSVRG}, respectively.

\subsection{Principal eigenvector computation over sphere manifold}

The first problem is to compute the leading eigenvector of a sample covariance matrix
\begin{align} \label{NumExp:1}
	\min_{x\in \mathrm{ S }^d}f(x):=-\sum_{i=1}^N\frac{p_i}{N_i}\sum_{j=1}^{N_i}x^T(z_{i,j}z_{i,j}^T)x,
\end{align}
where $\mathrm{S}^d=\{ x\in \mathbb{R}^{d+1}:\|x\|_2 =1 \}$, $p_i=N_i/\sum_{i=1}^NN_i$, and $D_i=\{z_{i,1},\dots,z_{i,N_i}\}$ with $z_{i,j}\in \mathbb{R}^{d+1}$,  $i \in[N]$, $j\in [N_i]$.
That is, $f_i(x)=\frac{1}{N_i}\sum_{j=1}^{N_i}f_{i,j}(x)$ and $f_{i,j}(x)=-x^T(z_{i,j}z_{i,j}^T)x$. The unit sphere $\mathrm{S}^d$ is a $d$-dimensional Riemannian submanifold of $\mathbb{R}^{d+1}$ equipped with the Euclidean metric, i.e., $\left<u,v\right>_x=u^Tv$ for any $u,v\in \mathrm{T}_x\mathrm{S}^d$.  The exponential map is given by $\mathrm{Exp}_x(u)=\cos(\|u\|_2)x + \sin(\|u\|_2)u/\|u\|_2$. 

Noting that $\|\mathrm{grad}f_{i,j}(x)\|_x=\|2(I_{d+1}-xx^T)(z_{i,j}z_{i,j}^T)x\|_2\le 2\|(z_{i,j}z_{i,j}^T)x\|_2\le 2z_{i,j}^Tz_{i,j}\le 2\max_{j\in[N_i]}\{z_{i,j}^Tz_{i,j}\}:=2\tilde{\vartheta}_{i}$, hence the geodesic Lipschitz continuous constant $L_f$ is bounded as $L_f\le 2\max_{i\in[N]}\{\tilde{\vartheta}_i\}:=2\tilde{\vartheta}$. On the other hand, the action of Hessian of $f_{i,j}$ at $x\in \mathrm{S}^d$ to $v\in \mathrm{T}_x\mathrm{S}^d$ is given by $\mathrm{Hess}f_{i,j}(x)[v]=2(z_{i,j}z_{i,j}^T)v-2(x^T(z_{i,j}z_{i,j}^T)v)x-2(x^T(z_{i,j}z_{i,j}^T)x)v$. Then for any $v\in \mathrm{T}_x\mathrm{S}^{d}$ and $\|v\|_x=1$,  we have
\begin{align*}
	\left<v,\mathrm{Hess}f_{i,j}(x)[v]\right>_x&= 2v^T(z_{ij}z_{ij}^T)v - 2v^T(x^T(z_{i,j}z_{i,j}^T)v)x-2v^T(x^T(z_{i,j}z_{i,j}^T)x)v \\
	&= 2v^Tz_{i,j}z_{i,j}^Tv - 2v^T(x^T(z_{i,j}z_{i,j}^T)x)v  \le 2z_{i,j}^Tz_{i,j},
\end{align*}
which implies that $\|\mathrm{Hess}f_{i,j}(x)\|_x\le2\tilde{\vartheta}_i$. Therefore, the smooth constant $L_g$ is bounded as $L_g\le 2\tilde{\vartheta}$. Hence, we set $\tau=\tau_0=\tau_1=L_g$.

\paragraph{Synthetic data.}
The sample matrix $Z_i=[z_{i,1}^T;\dots;z_{i,N_i}^T]\in \mathbb{R}^{N_i\times (d+1)}$ for agent $\mathcal{ C }_i$ is generated by following the approach in~\cite[Section~5.1]{HMJG22}. We first construct a $(d+1)\times (d+1)$ diagonal matrix $\Sigma_i=\mathrm{diag}\{1,1-1.1 \nu,\dots,1-1.4 \nu,|y_1|/(d+1),|y_2|/(d+1),\dots\}$ where $\nu$ is the eigengap and $y_1,y_2,\dots$ are drawn from the standard Gaussian distribution. Then construct $Z_i=U_i\Sigma_iV_i$ where $U_i,V_i$ of size $N_i\times (d+1)$ and $(d+1)\times (d+1)$ are random column orthonormal matrices. To explore the trade-off between privacy and accuracy of PriRFed combined with DP-RSGD and DP-RSVRG, the algorithm on multiple privacy levels ($\epsilon=0.08,0.15,0.3$ for local training procedures with the same $\delta=10^{-5}$) is tested. The results are reported in Figure~\ref{fig:NumExp:1} where the parameters $s_t, K, b_i, N_i, N$ are given in the caption and titles.
It is evident that as the privacy budget $\epsilon$ increases, indicating a reduction in the added noise, the cost function value becomes increasingly accurate, and the norm of gradients progressively diminishes. This observation aligns with the theoretical expectations (Theorem~\ref{th:conv:for DP-RSGD1},~\ref{th:conv:for DP-RSGD2}, and~\ref{th:conv:for DP-RSVRG}).

\begin{figure}[ht]
	\centering
	\subfigure{
		\includegraphics[width=0.3\linewidth]{./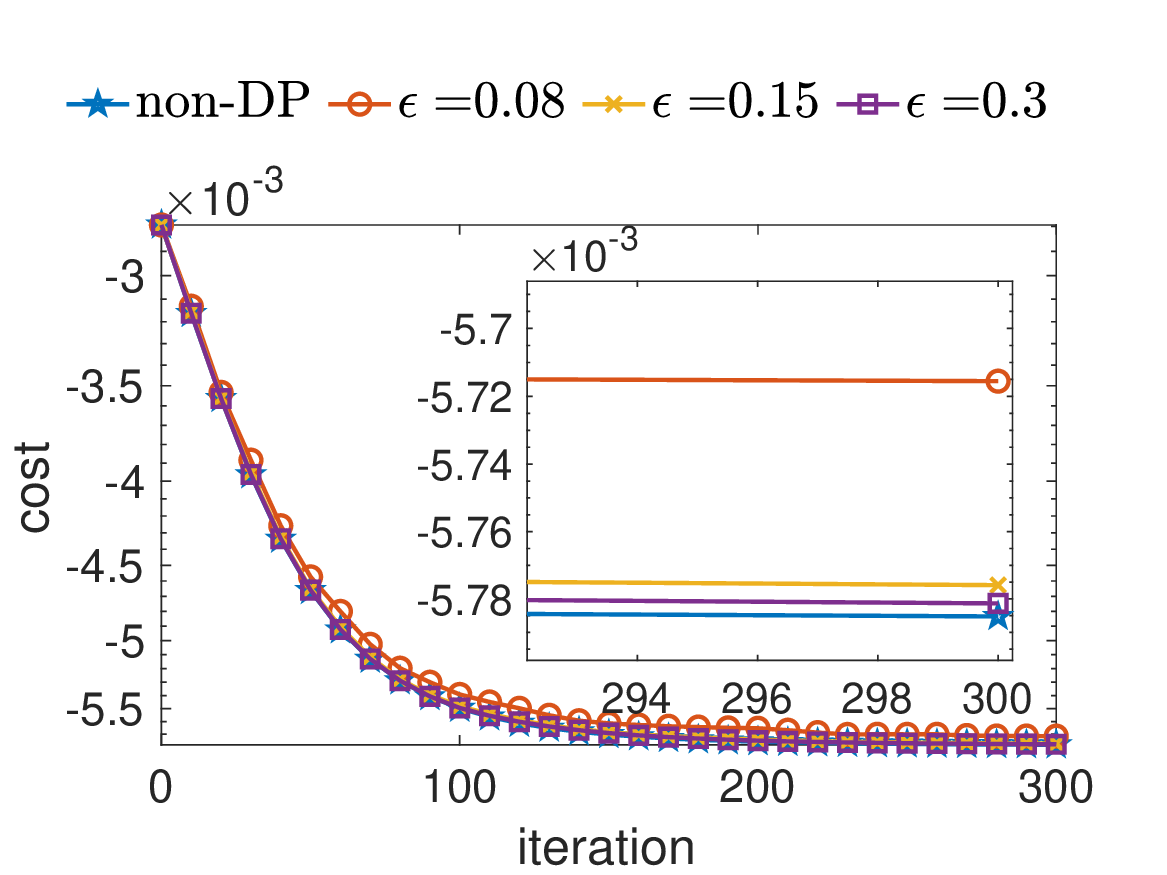}
		}
	\subfigure{
		\includegraphics[width=0.3\linewidth]{./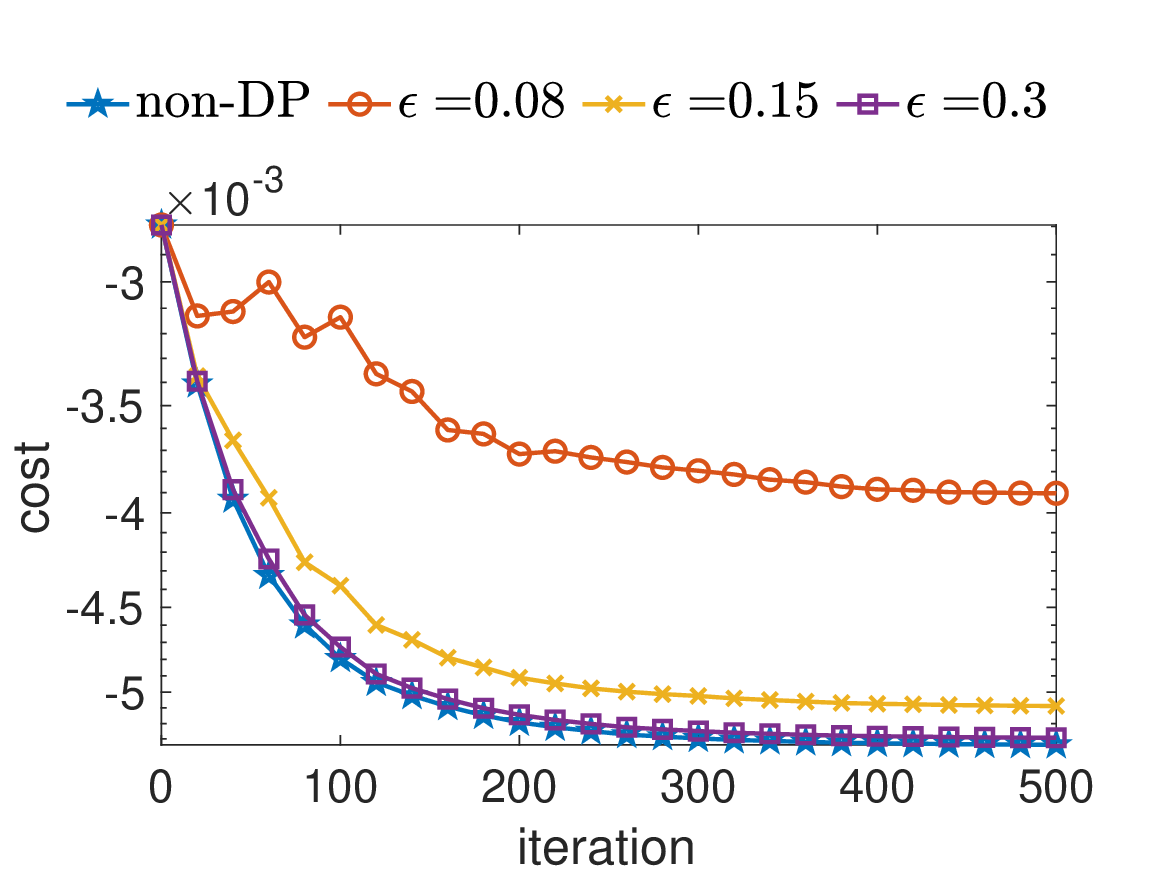}
	}
	\subfigure{
		\includegraphics[width=0.3\linewidth]{./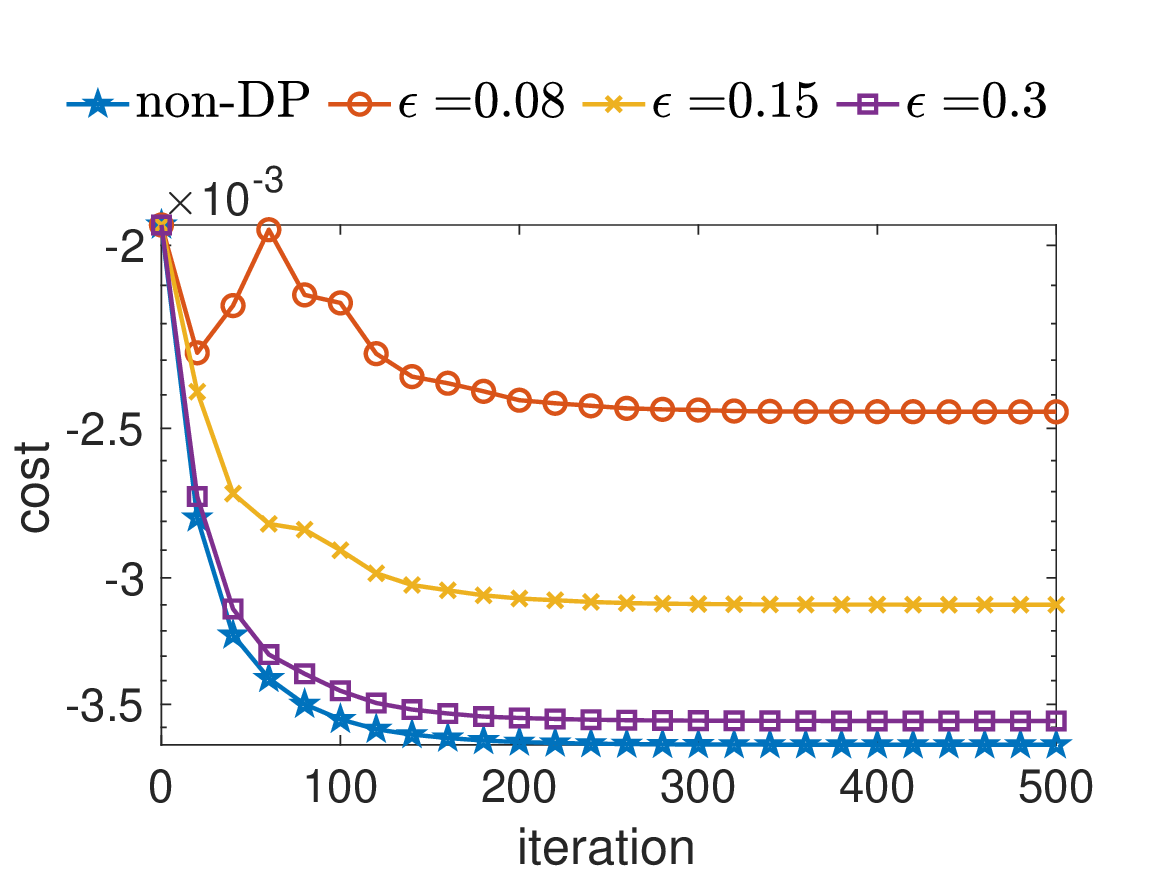}
	}

	\setcounter{subfigure}{0}
	
	\subfigure[$N=16,N_i=70$]{
		\centering
		\includegraphics[width=0.3\linewidth]{./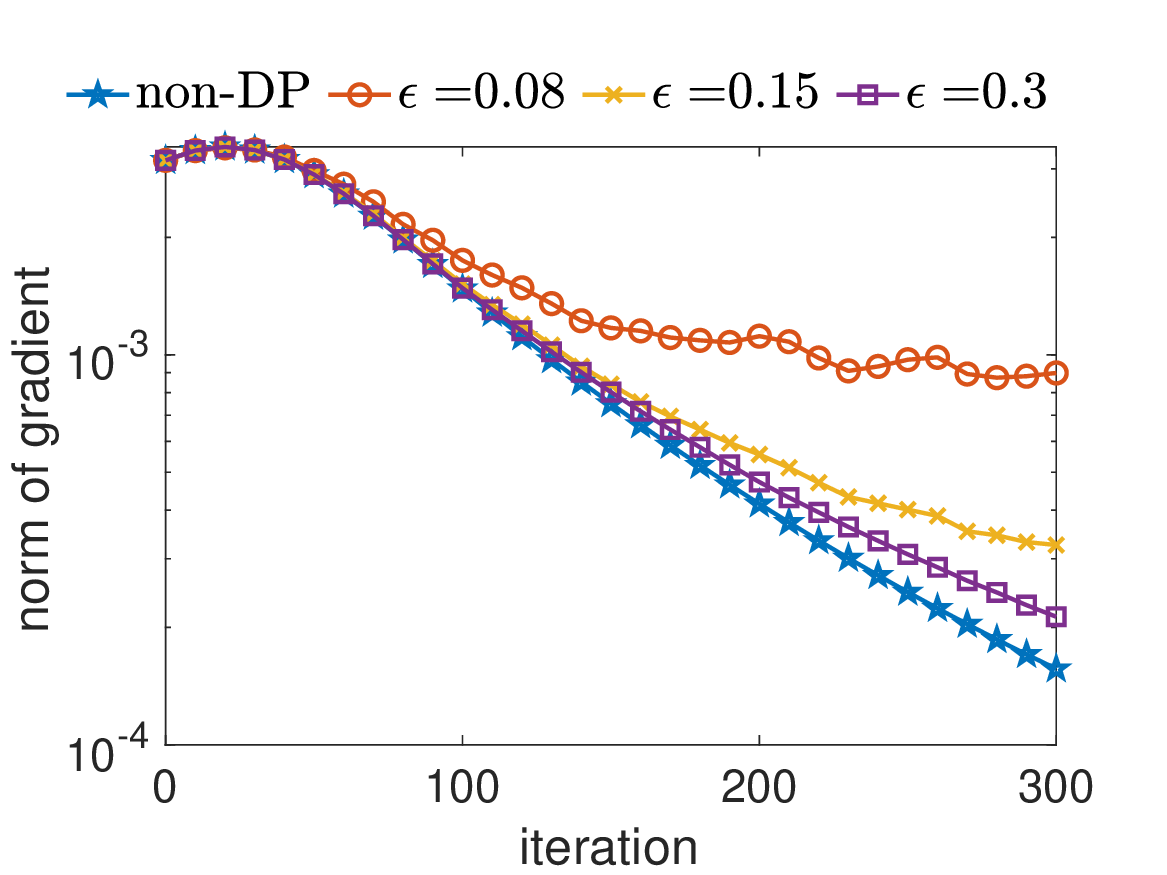}
		\label{fig:NumExp:1-1}
	}
	\subfigure[$N=16,N_i=70$]{
		\centering
		\includegraphics[width=0.3\linewidth]{./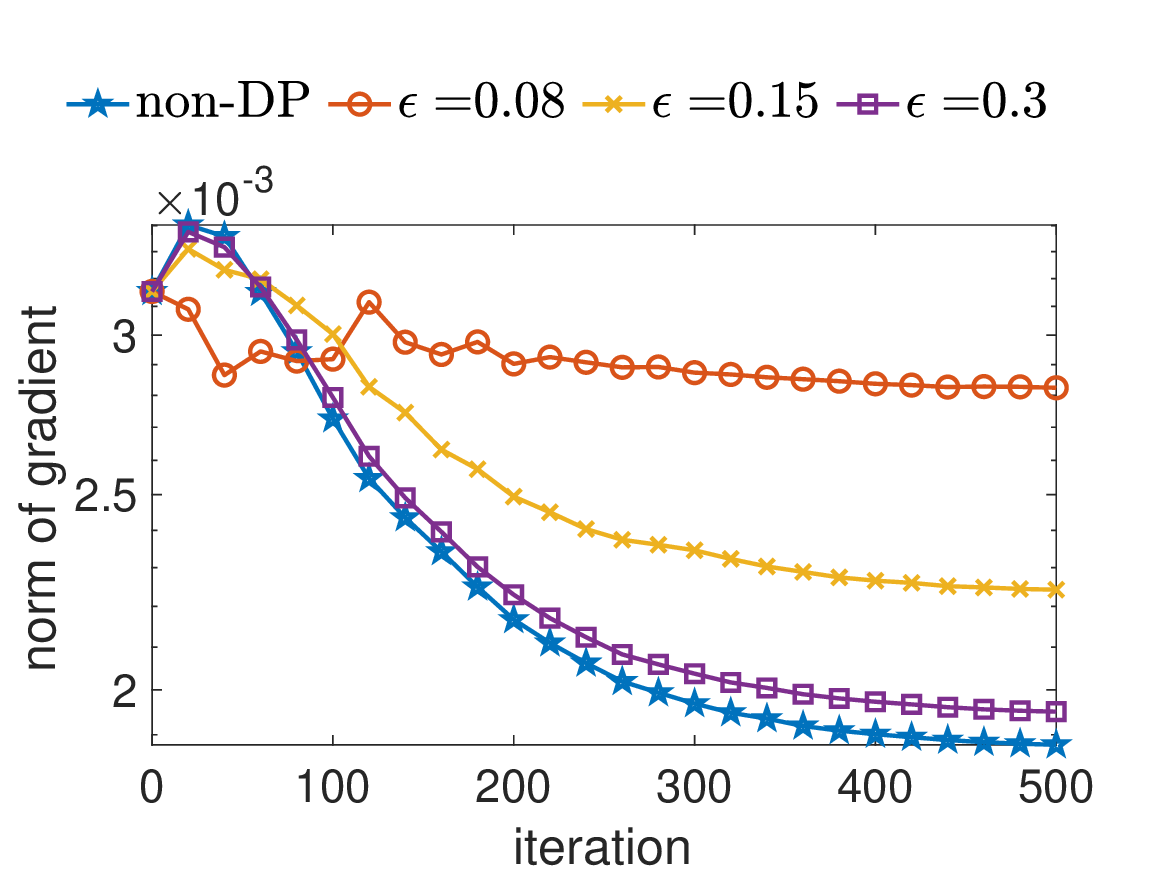}
		\label{fig:NumExp:1-2}
	}
	\subfigure[$N=16,N_i=100$]{
		\centering
		\includegraphics[width=0.3\linewidth]{./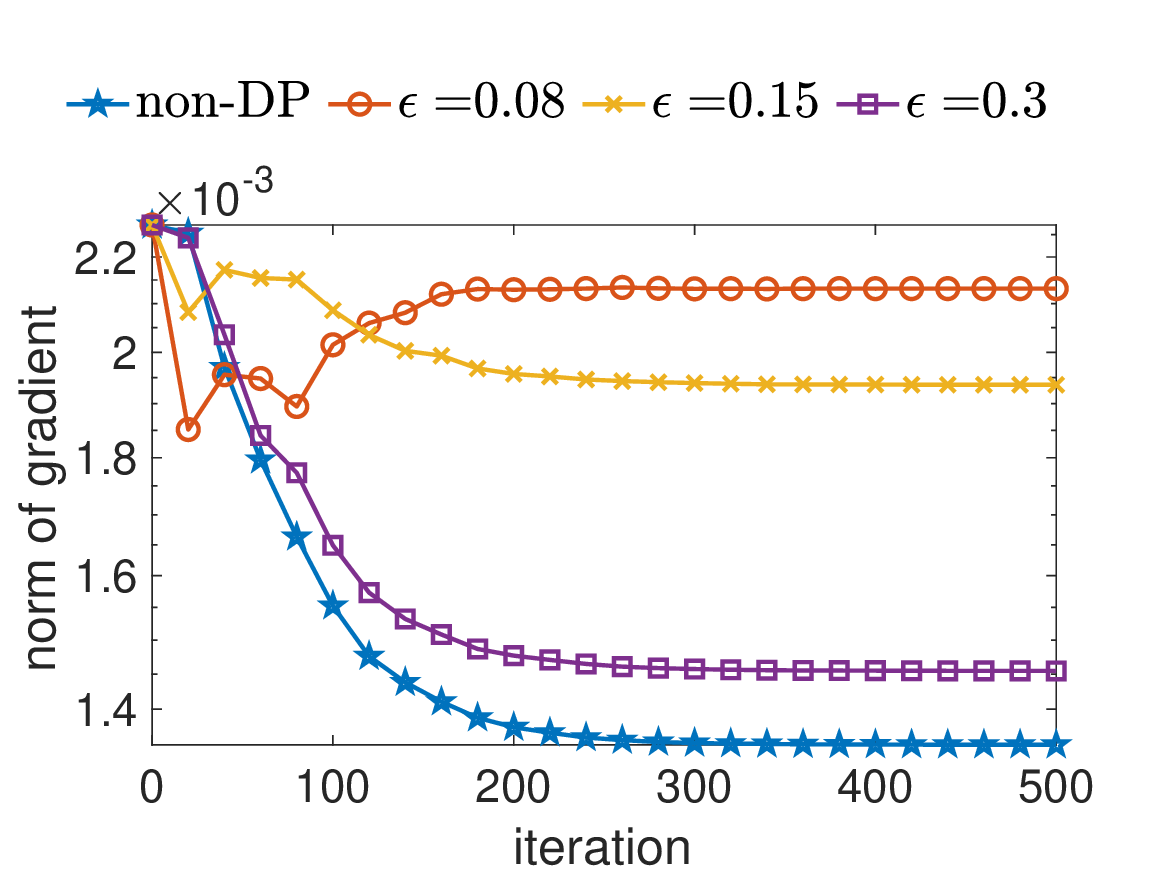}
		\label{fig:NumExp:1-3}
	}
	\caption{Averaged results over $10$ tests for Problem~\eqref{NumExp:1} using PriRFed-DP-RSGD and PriRFed-DP-RSVRG with different private levels. Here $d+1=25$, $v = 10^{-3}$. The $y$-axis of the figures in the first row is the cost value. And the one in the second row denotes $\|\mathrm{grad}f(x^{(t)})\|_{x^{(t)}}$. The $x$-axis of all figures means the iterations. For the first column, $s_t=N$, $K=1$, , $b_i=N_i$ and $\alpha_t=1/(2L_g)$. For the second column, $s_t=1$, $K=5$,  $b_i=N_i/2$, and $\alpha_t=1.0$. In the first two columns, DP-RSGD is used as the privately local training procedure, and DP-RSVRG in the third. 
	For the third column, $s_t=1$, $K=2$, $m=\lfloor 10N/3 \rfloor$, and $\alpha_t=1/(10N^{2/3}L_g)$.}
	\label{fig:NumExp:1}
\end{figure}

\paragraph{MNIST.} The MNIST~\cite{Deng12} is one of the standard datasets in the machine learning field. It contains 60,000 hand-written images of size $28\times 28$ (in the case, $d+1=28^2={784}$). 
We set $\epsilon=0.15$, $\delta=10^{-4}$, $K=3$, $m=\lfloor 10 N/3 \rfloor$ and $\alpha_t=0.1$. We test four scenarios where $(N=50, N_i=1200)$, $(N=60,N_i=1000)$, $(N=80,N_i=750)$, and $(N=100, N_i=600)$, respectively, to explore the performance of Algorithm~\ref{alg:PriRFed} with DP-RSGD and DP-RSVRG for solving Problem~\eqref{NumExp:1}. 
The cost values against the iterations is shown in Figure~\ref{fig:NumExp:4}. In all scenarios, the true value is given by the Riemannian steepest descent method from {Manopt}.

\begin{figure}[ht]
\centering
\subfigure[$N=50,N_i=1200$]{
\includegraphics[width=3.5cm]{./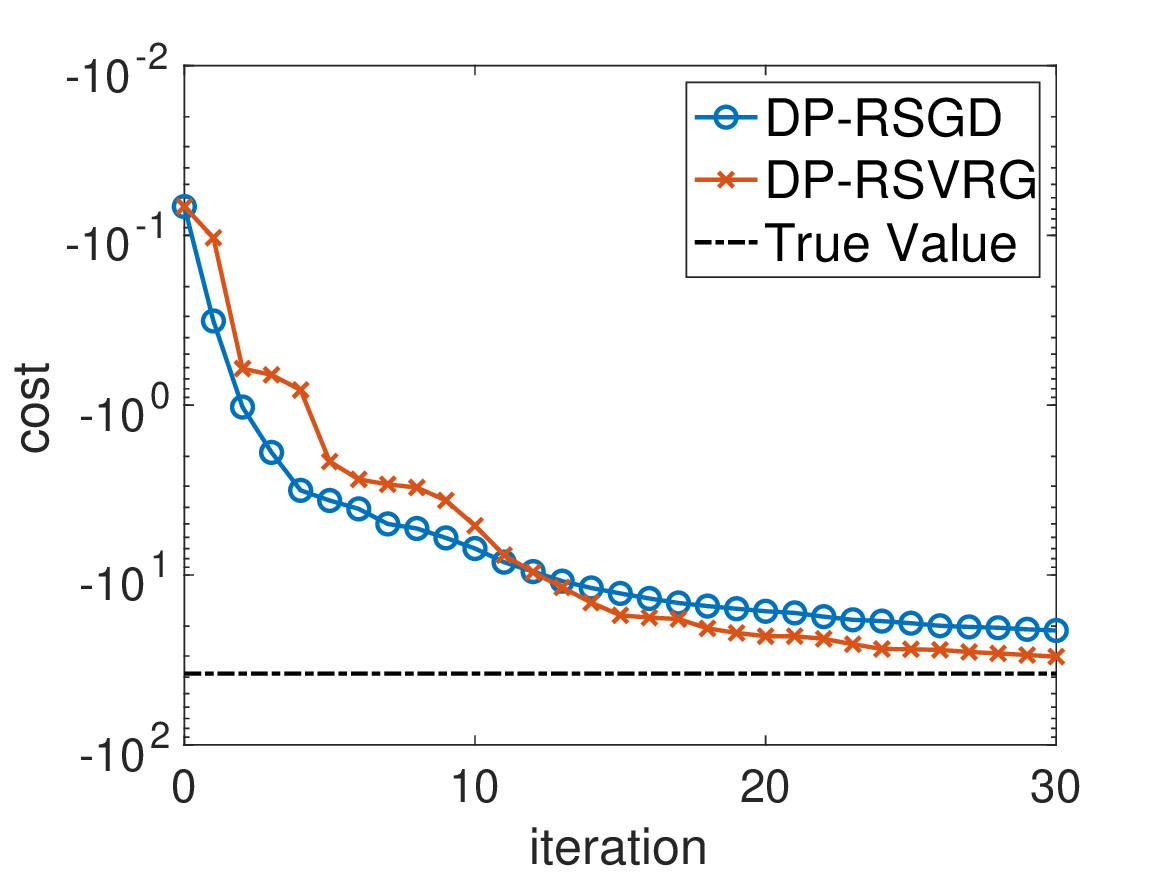}
\label{fig:NumExp:4-1}
}
\quad
\subfigure[$N=60,N_i=1000$]{
\includegraphics[width=3.5cm]{./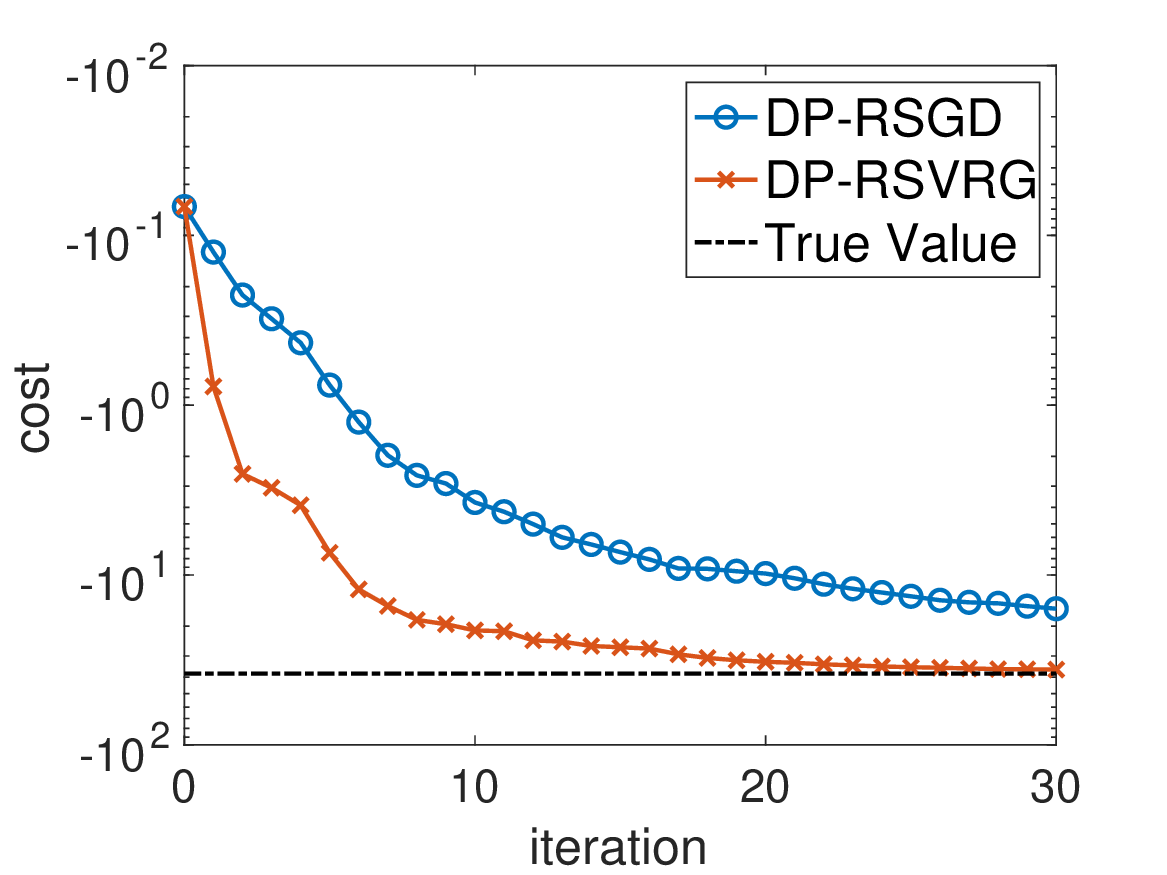}
\label{fig:NumExp:4-2}
}
\quad
\subfigure[$N=80,N_i=750$]{
\includegraphics[width=3.5cm]{./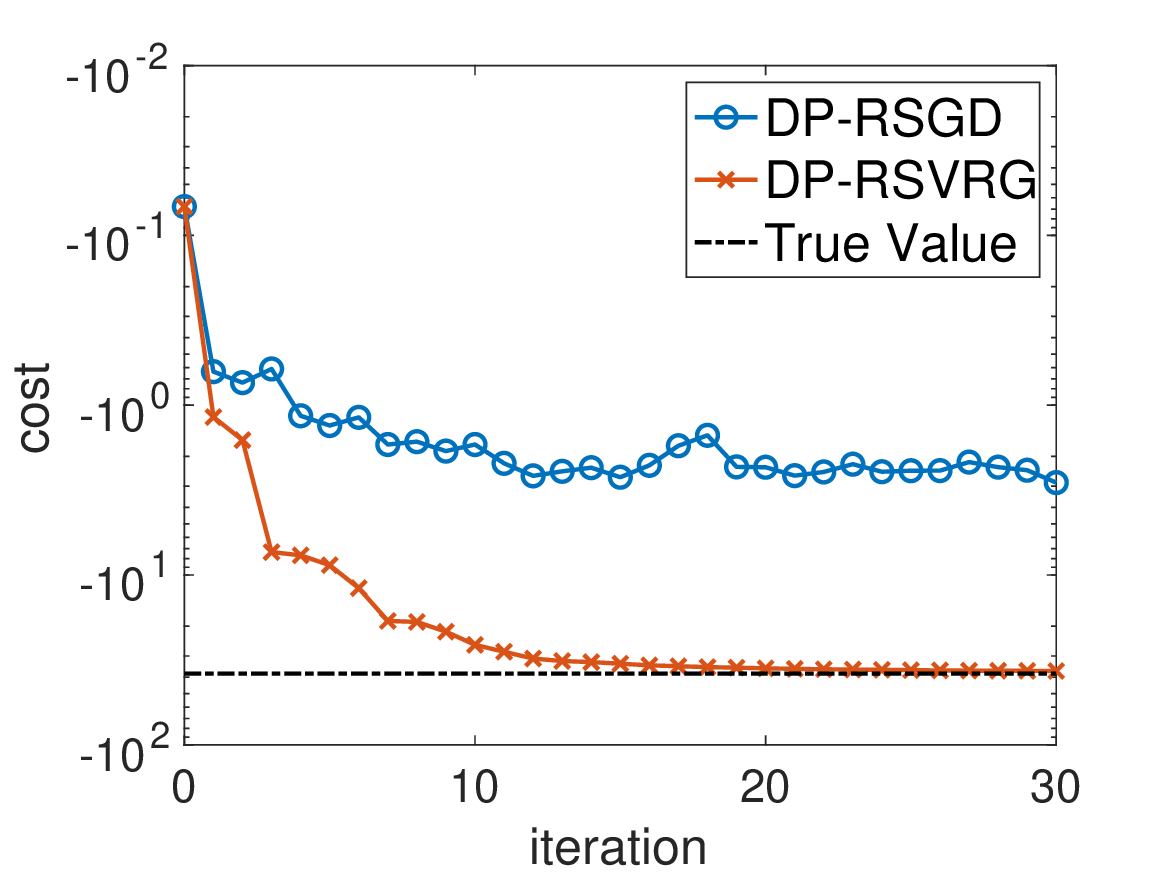}
\label{fig:NumExp:4-3}
}
\quad
\subfigure[$N=100,N_i=600$]{
\includegraphics[width=3.5cm]{./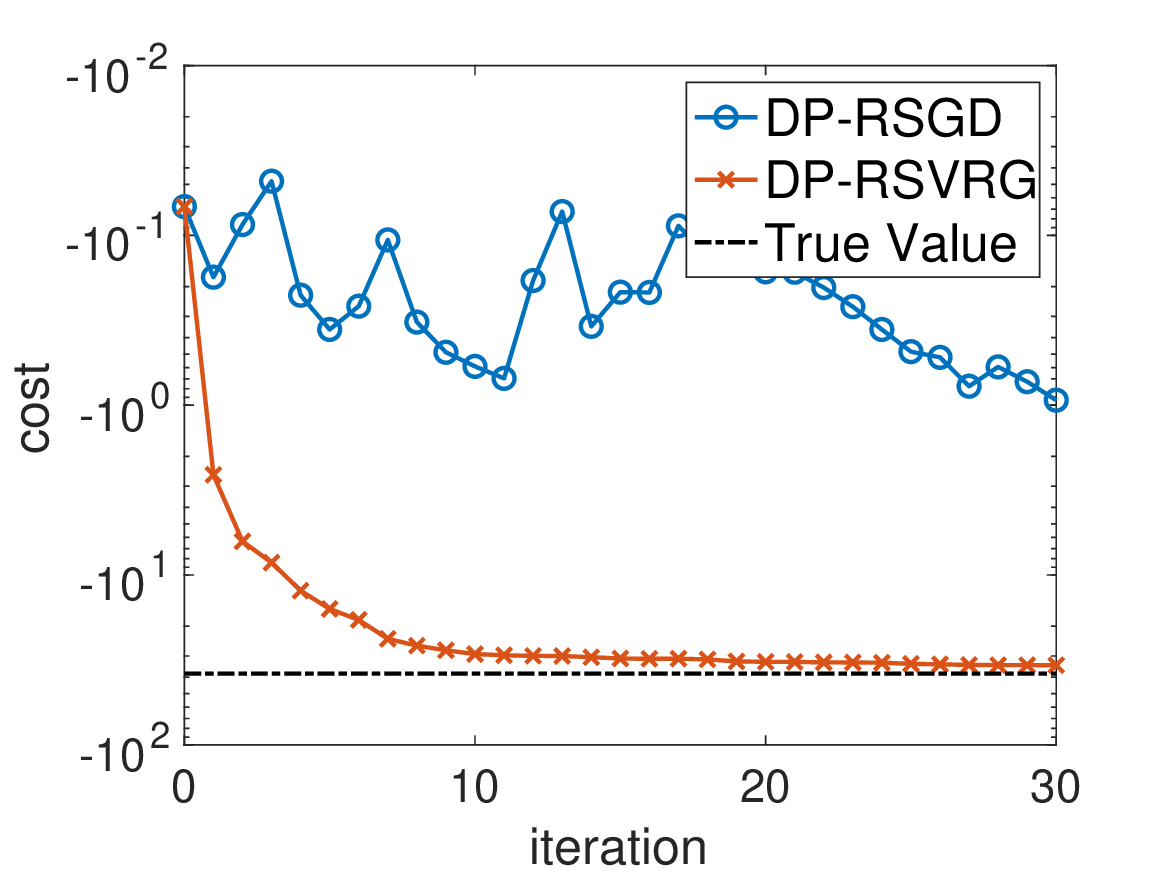}
\label{fig:NumExp:4-4}
}
\caption{Averaged results over $10$ tests for Problem~\eqref{NumExp:1} with MNIST dataset. The legends ``DP-RSGD'' and ``DP-SRSGD'' refer respectively to PriRFed-DP-RSGD and PriRFed-DP-RSVRG, and ``True Value'' is provided by Riemannian steepest descent method.}

\label{fig:NumExp:4}
\end{figure}


Given that Algorithm~\ref{alg:DP-RSVRG} employs the full gradient every $m$-step to improve stochastic gradient, while Algorithm~\ref{alg:DP-RSGD} relies only on stochastic gradient, it is reasonable to anticipate that PriRFed-DP-RSVRG converges more rapidly to a solution compared to PriRFed-DP-RSGD. This expectation is duly affirmed by Figure~\ref{fig:NumExp:4}. 
Furthermore, as per Theorem~\ref{th:conv:for DP-RSVRG}, an increase in the number of agents, denoted by $N$, corresponds to a smaller convergence bound. This outcome is further validated by Figure~\ref{fig:NumExp:4}. 


\subsection{Fr\'{e}chet mean computation over symmetric positive definite  matrix manifold}
The second problem is to compute the Fr\'{e}chet mean over the symmetric positive definite (SPD) matrix manifold of size $d\times d$, denoted by $\mathbb{S}_{++}^{d}$. The tangent space of $\mathbb{S}_{++}^d$ at $X$ is given by $\mathrm{T}_X \mathbb{S}_{++}^d=\{S\in \mathbb{R}^{d\times d}:S^T=S\}$. The Riemannian metric on $\mathbb{S}_{++}^d$ is chosen as the affine-invariant metric~\cite{Pen06, Bha07}: $\left<U,V\right>_X=\mathrm{trace}(UX^{-1}VX^{-1})$. The exponential map is given by $\mathrm{Exp}_X(U)=X^{1/2}\mathrm{expm}(X^{-1/2}UX^{-1/2})X^{1/2}$ with $\mathrm{expm(\cdot)}$ the principal matrix exponential.

Specifically, for $N$ sets of SPD matrices $\{Z_{1,1},\dots,Z_{1,N_1}\},\dots,\{Z_{N,1},\dots,Z_{N,N_N}\}\subset \mathbb{S}_{++}^d$, the Fr\'{e}chet mean of those SPD matrices is the solution of 
\begin{align} \label{NumExp:2}
	\argmin_{X\in \mathbb{S}_{++}^d}f(X)=\sum_{i=1}^N\frac{p_i}{N_i}\sum_{j=1}^{N_i}\|\mathrm{logm}(X^{-1/2}Z_{i,j}X^{-1/2})\|_F^2,
\end{align}
where $p_i=N_i/\sum_{i=1}^NN_i$, $D_i=\{Z_{i,1},\dots,Z_{i,N_i}\}$, $\mathrm{logm}(\cdot)$ is the principal matrix logarithm, $f_{i,j}(X)=\|\mathrm{logm}(X^{-1/2}Z_{i,j}X^{-1/2})\|_F^2$, and $f_i(X)=\frac{1}{N_i}\sum_{j=1}^{N_i}f_{i,j}(X)$. The Riemannian gradient of Problem~\eqref{NumExp:2} is given by $\mathrm{grad}f(X)=-2\sum_{i=1}^{N}\frac{p_i}{N_i}\sum_{j=1}^{N_i}X^{1/2}\mathrm{logm}(X^{-1/2}Z_{i,j}X^{-1/2})X^{1/2}$. 
Here we set the privacy parameters as $\epsilon=0.15$ and $\delta=10^{-4}$ for local training procedures, and the algorithm parameters as $s_t=1$, $K=3$, $m=10$, and $\tau=\tau_0=\tau_1=1.0$.

\paragraph{Synthetic data.} The synthetic samples are generated by following the steps in~\cite{RBS21}. The samples follow the Wishart distribution $W(I_d/d, d)$ with a diameter bound $D_{\mathcal{W}}$. For our simulations, we set $D_{\mathcal{W}}=1$ and $d=2$ and the number of all agents' samples is $5000$. We test two scenarios: $(N, N_i)=(20, 250), (N, N_i) = (50, 100)$ respectively. The results are reported in Figure~\ref{fig:NumExp:5} where the step size is set as $\alpha_t=0.05$. 
\begin{figure}[ht]
\centering
\subfigure[$N=20,N_i=250$]{
\includegraphics[width=3.5cm]{./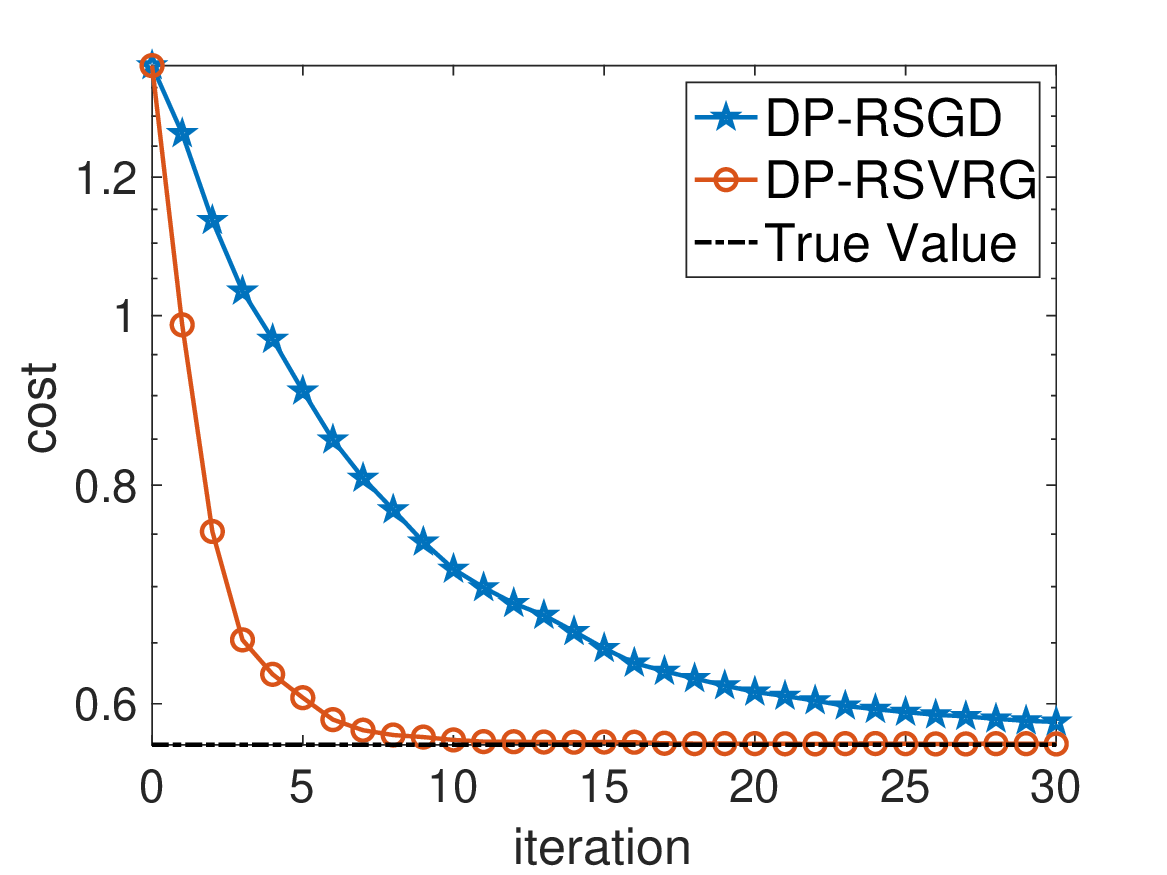}
\label{fig:NumExp:5-1}
}
\quad
\subfigure[$N=20,N_i=250$]{
\includegraphics[width=3.5cm]{./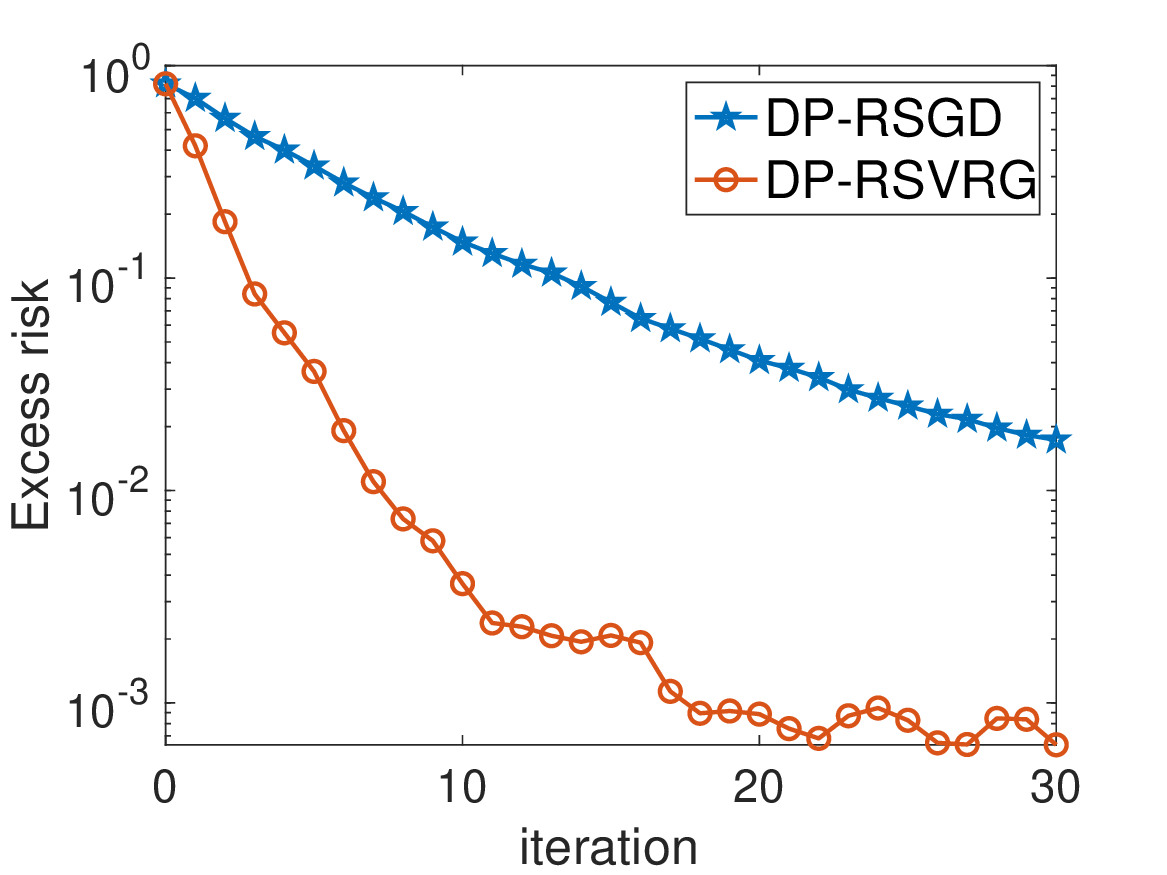}
\label{fig:NumExp:5-2}
}
\quad
\subfigure[$N=50,N_i=100$]{
\includegraphics[width=3.5cm]{./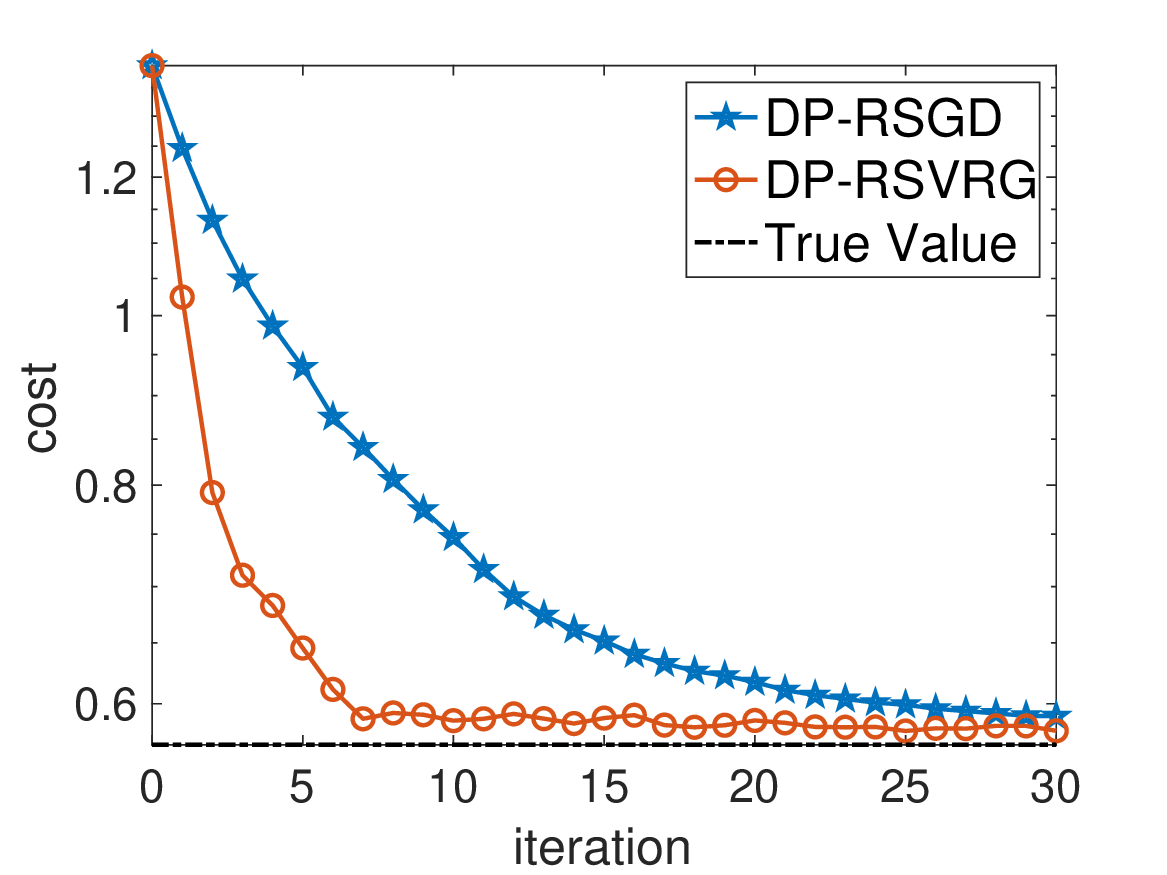}
\label{fig:NumExp:5-3}
}
\quad
\subfigure[$N=50,N_i=100$]{
\includegraphics[width=3.5cm]{./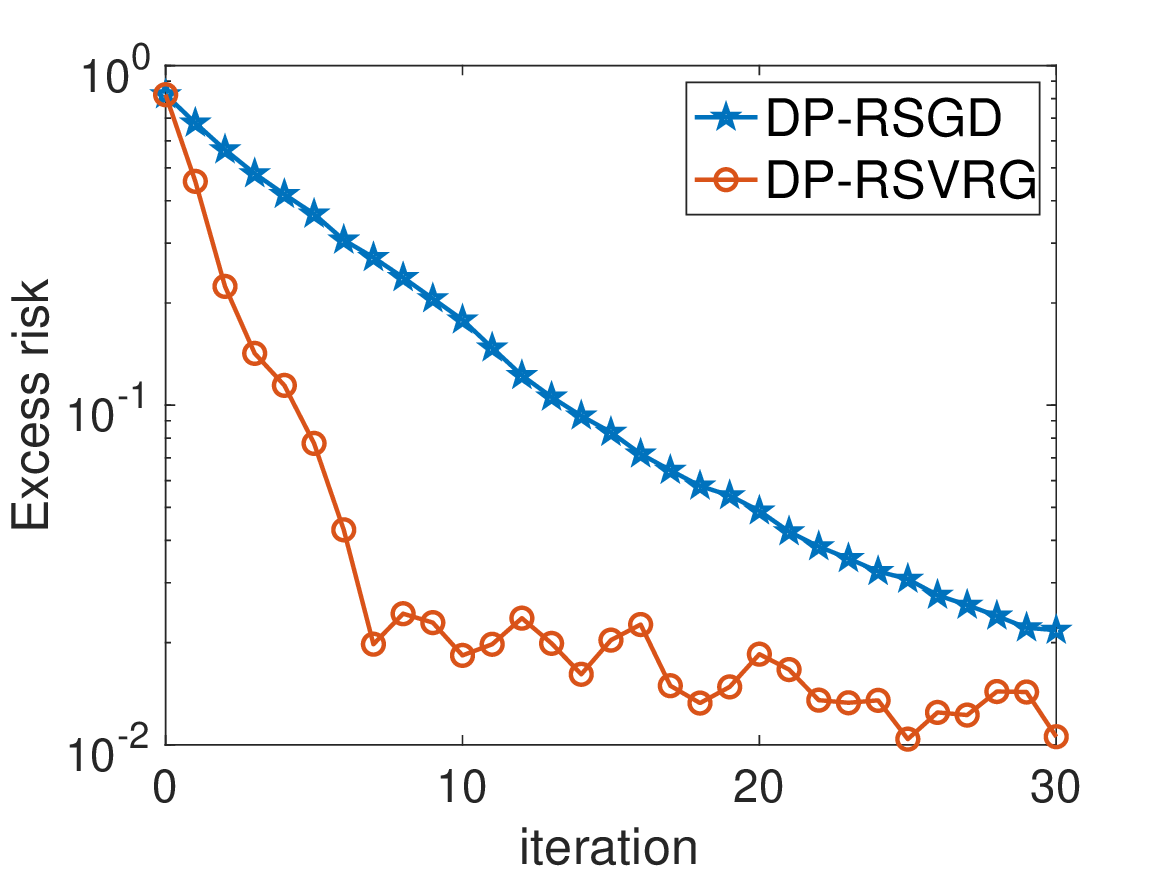}
\label{fig:NumExp:5-4}
}
\caption{Averaged results over $10$ tests for Problem~\eqref{NumExp:2} with synthetic dataset. The legends ``DP-RSGD'' and ``DP-SRSGD'' refer respectively to PriRFed-DP-RSGD and PriRFed-DP-RSVRG, and ``True Value'' is provided by Riemannian steepest gradient method. Excess risk is defined by $f(\tilde{x})-f(x^*)$, where $f(x^*)$ is given through steepest descent method.}

\label{fig:NumExp:5}
\end{figure}

\paragraph{PATHMNIST.}  The PATHMNIST dataset given in~\cite{YSW23} has $89996$ RGB images. Each image in PATHMNIST is transformed into a $9\times 9$ SPD matrix by the covariance descriptor~\cite{TPM06}. $20000$ images are randomly chosen from PATHMNIST for our tests with $(N, N_i) = (50, 400)$ and $(N, N_i) = (100, 200)$. The results are reported in Figure~\ref{fig:NumExp:6}, where the step size is set as $\alpha_t=0.3$.


\begin{figure}[ht]
\centering
\subfigure[$N=50,N_i=400$]{
\includegraphics[width=3.5cm]{./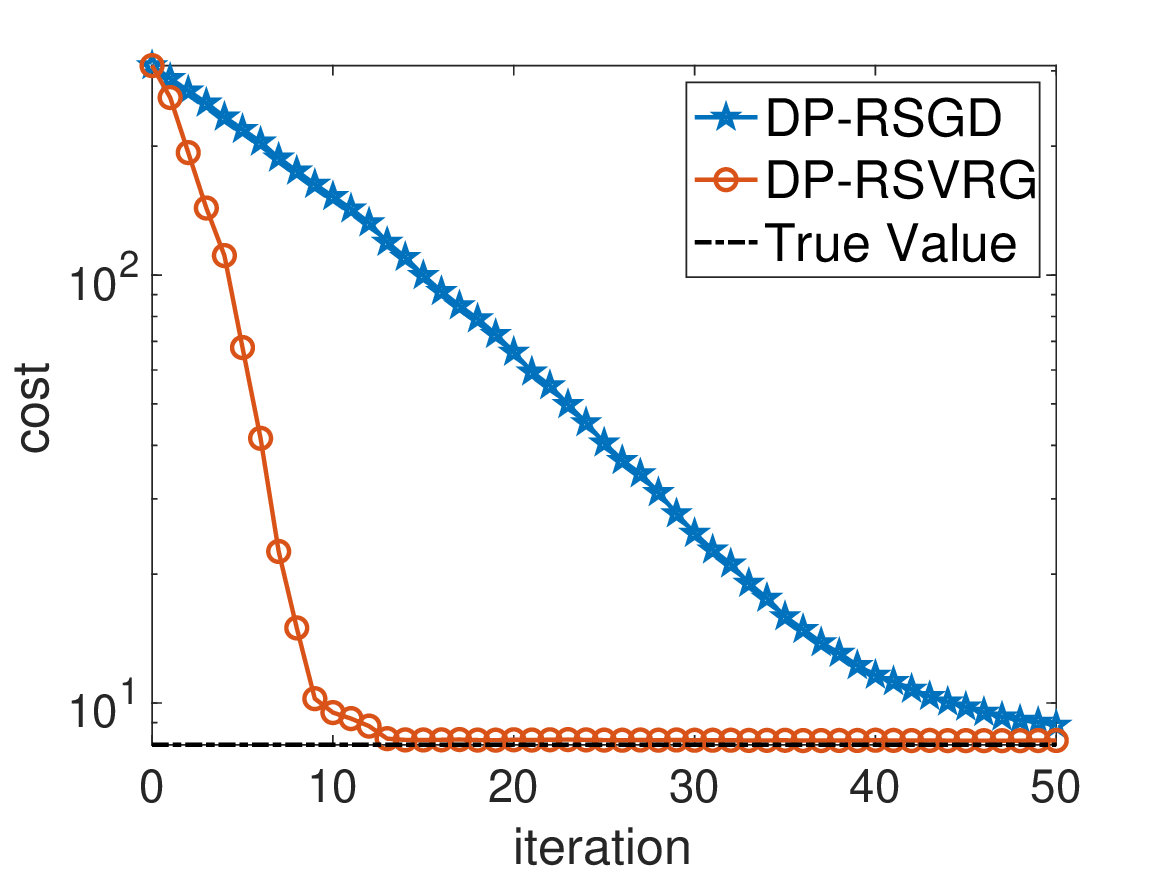}
\label{fig:NumExp:6-1}
}
\quad
\subfigure[$N=50,N_i=400$]{
\includegraphics[width=3.5cm]{./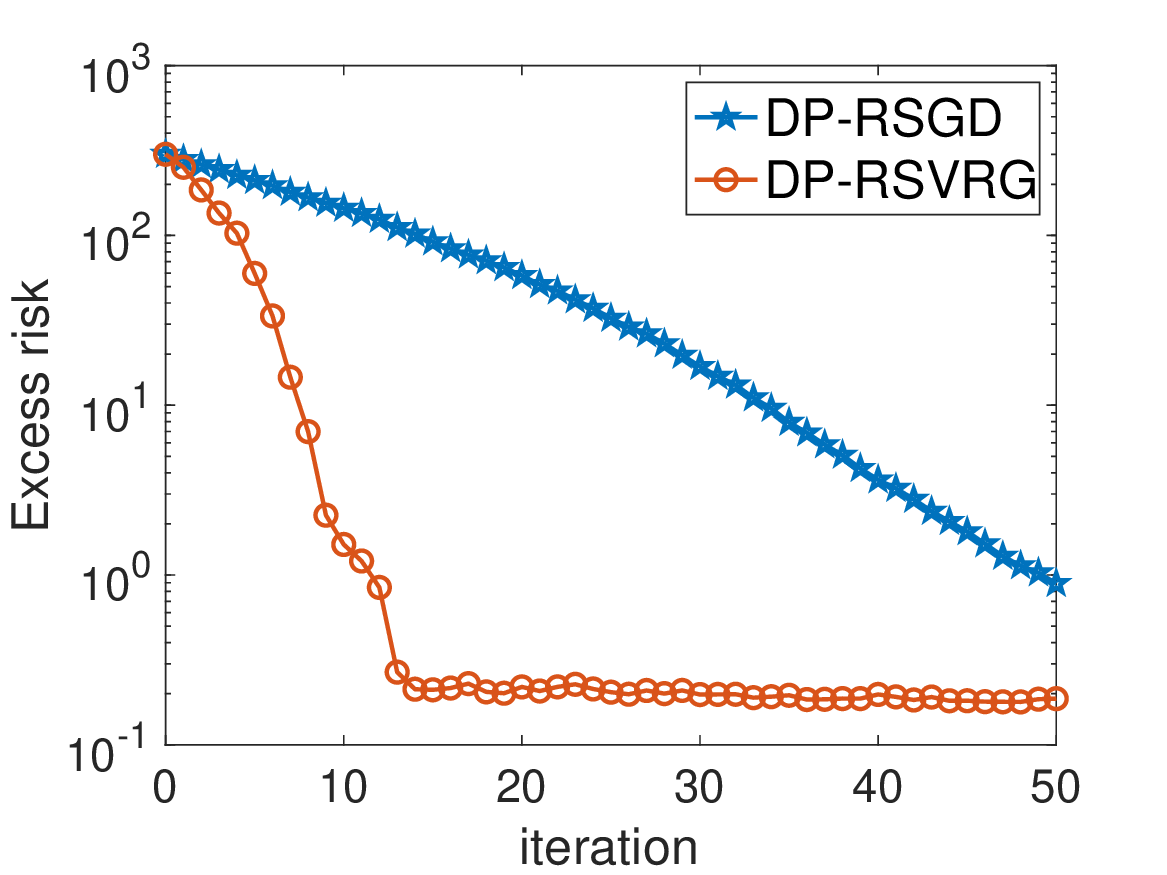}
\label{fig:NumExp:6-2}
}
\quad
\subfigure[$N=100,N_i=200$]{
\includegraphics[width=3.5cm]{./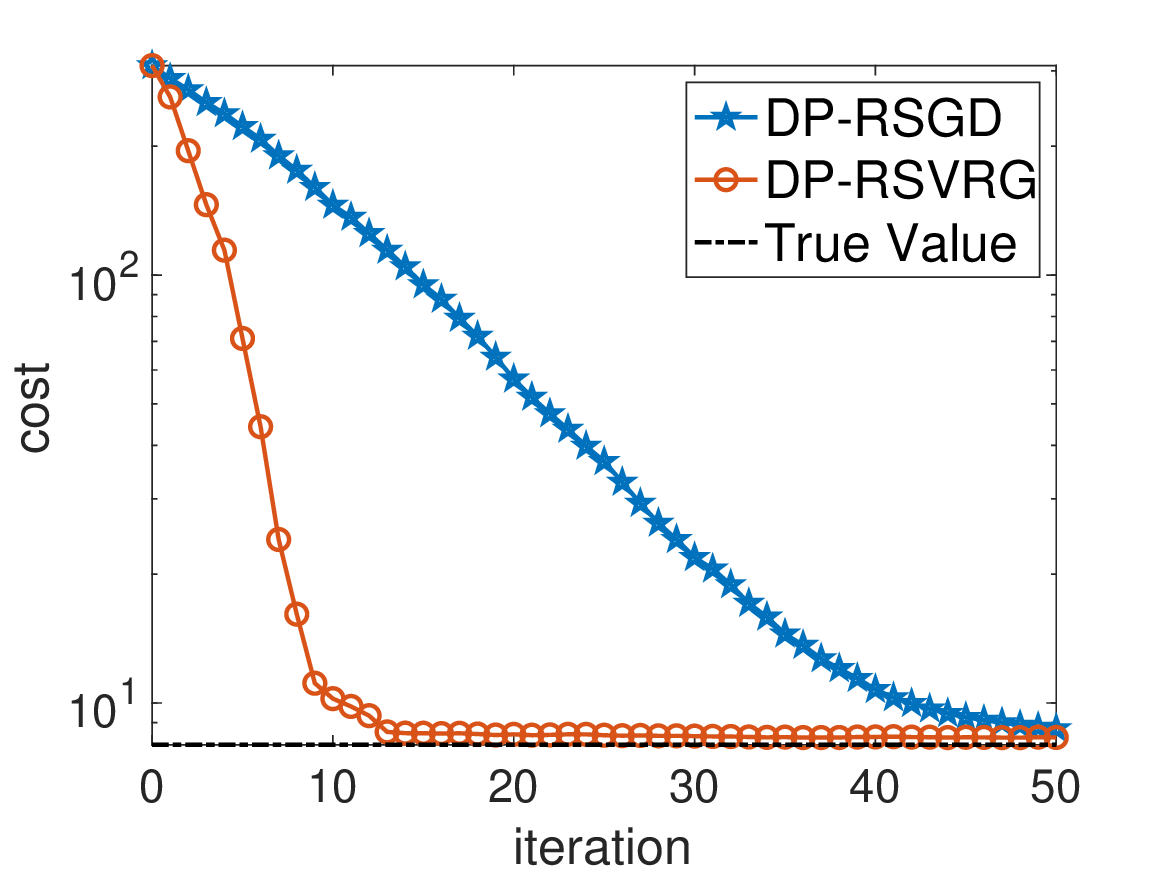}
\label{fig:NumExp:6-3}
}
\quad
\subfigure[$N=100,N_i=200$]{
\includegraphics[width=3.5cm]{./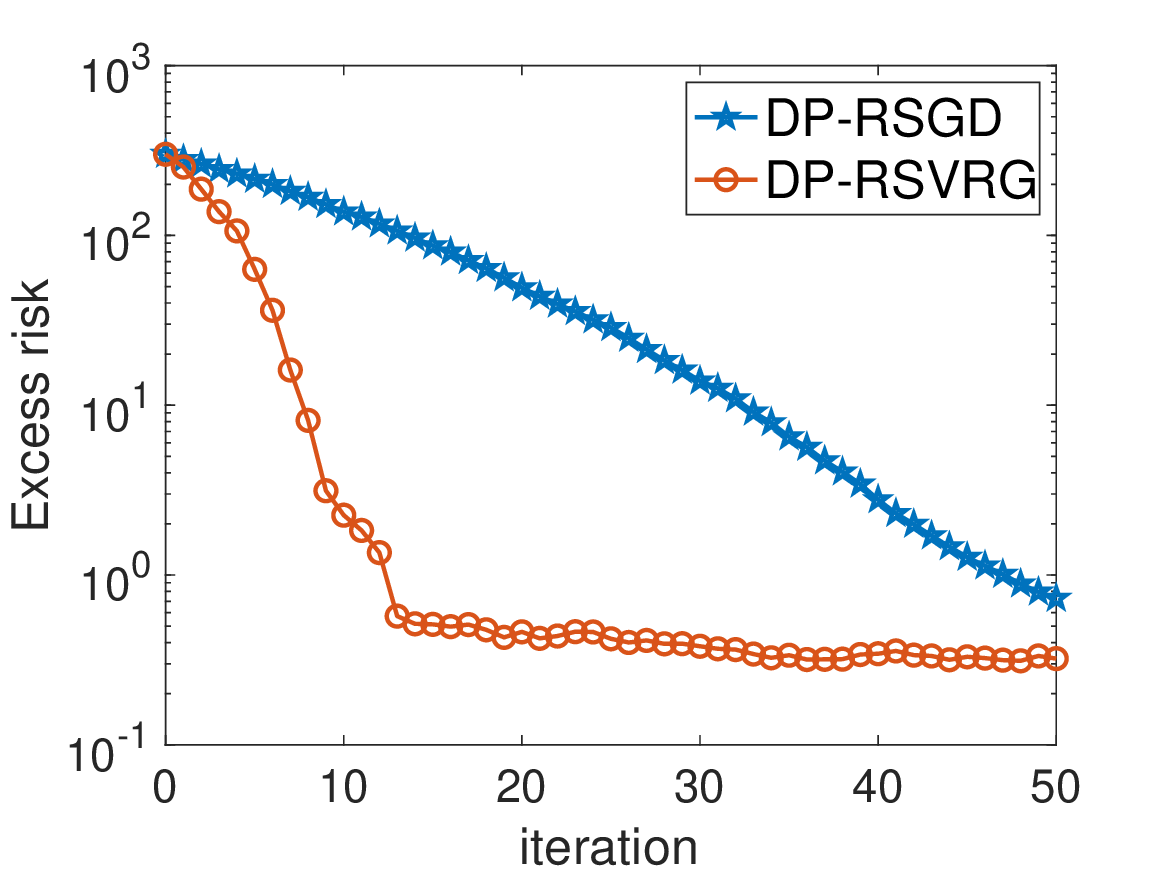}
\label{fig:NumExp:6-4}
}
\caption{Averaged results over $10$ tests for Problem~\eqref{NumExp:2} with PATHMNIST dataset. The legends ``DP-RSGD'' and ``DP-SRSGD'' refer respectively to PriRFed-DP-RSGD and PriRFed-DP-RSVRG, and ``True Value'' is provided by Riemannian steepest descent method. Excess risk is defined by $f(\tilde{x})-f(x^*)$, where $f(x^*)$ is given through steepest descent method.}

\label{fig:NumExp:6}
\end{figure}


It is shown in Figures~\ref{fig:NumExp:5} and~\ref{fig:NumExp:6} that PriRFed -DP-RSVRG consistently outperforms PriRFed-DP-RSGD in computing the Fr\'{e}chet mean of the SPD matrices. For a smaller number of local samples $N_i$ (which implies that the amount of added noise is larger), PriRFed-DP-RSGD is notably slower than PriRFed-DP-RSVRG. Such performance suggests that the convergence speed of PriRFed-DP-RSVRG is less sensitive to the added noise.



\subsection{Hyperbolic structured prediction}

Hyperbolic structured prediction (HSP) is defined on the hyperbolic manifold, which 
has great promise in representation learning of word semantics~\cite{NK17,TBG18}, graphs~\cite{CYRL19}, and images~\cite{KMUOL19}.
 The Lorentz hyperboloid model as one of equivalent represented models of hyperbolic manifold is defined by 
$
	\mathcal{H}^d:=\{x\in \mathbb{R}^{d+1}:\left<x,x\right>_{\mathcal{L}}=-1\}
$ with constant curvature $\kappa=-1$,
where $\left<x,y\right>_{\mathcal{L}}=x^Ty - 2x_1y_1$. For $x\in \mathcal{H}^d$, the tangent space at $x$ is given by 
$
	\mathrm{T}_x \mathcal{H}^d:=\{ v\in \mathbb{R}^{d+1}: \left<x,v\right>_{\mathcal{L}}=0 \}.
$
The Riemannian metric that we used here is given by $\left<u,v\right>_x= \left<u,v\right>_{\mathcal{L}}$ for any $u,v\in \mathrm{T}_x \mathcal{H}^d$. The exponential map, its inverse and the Riemannian distance function are respectively given by 
$
	\mathrm{Exp}_x(v) = \cosh(\|v\|_{\mathcal{L}})x + v\frac{\sinh(\|v\|_{\mathcal{L}})}{\|v\|_{\mathcal{L}}}
$,
$\mathrm{Exp}^{-1}_x(y)=\frac{\cosh^{-1}(-\left<x,y\right>_{\mathcal{L}})}{\sinh(\cosh^{-1}(-\left<x,y\right>_{\mathcal{L}}))}(y+ \left<x,y\right>_{\mathcal{L}}x)$, and  
$
	\mathrm{dist}(x,y) = \cosh^{-1}(-\left<x,y\right>_{\mathcal{L}}).
$

Hyperbolic structured prediction has been used to infer the taxonomy embeddings for unknown words~\cite{MRC20}. Specifically, consider a set of training pairs $D_i = \{\{(w_{i,j},y_{i,j})\}_{j=1}^{N_i}\}_{i=1}^N$, where $w_{i,j}\in \mathbb{R}^r$ are the features and $y_{i,j} \in \mathcal{H}^d$ are the hyperbolic embeddings of the class of $w_{i,j}$. Then for a test sample $w$, the goal of HSP is to predict its hyperbolic embeddings by solving the following problem: 
\begin{equation}
\begin{aligned} \label{NumExp:3}
	h(x):=\argmin_{x\in \mathcal{H}^d} f(x) = \sum_{i=1}^Np_i\sum_{j=1}^{N_i}\frac{1}{N_i}\alpha_{i,j}(w)\mathrm{dist}^2(x,y_{i,j}),
\end{aligned}
\end{equation}
where $p_i=N_i/\sum_{i=1}^NN_i$, 
$\alpha_i(w)=(\alpha_{i,1}(w),\dots,\alpha_{i,N_i}(w))^T \in \mathbb{R}^{N_i}$ can be computed as $\alpha_i(w)=(K_i+\gamma I)^{-1}K_{i,w}$, $\gamma>0$ is the regularization parameter and $K_i\in \mathbb{R}^{N_i \times N_i}$, $K_{i,w}\in \mathbb{R}^{N_i}$ are given by $(K_i)_{l,j}=k(w_{i,l},w_{i,j})$ and $(K_{i,w})_j=k(w_{i,j},w)$ for the RBF kernel function $k(w,w')=\mathrm{exp}(-\|w-w'\|_2^2/(2\bar{v})^2)$. Note that~\eqref{NumExp:3} is in the form of~\eqref{prob} with $f_i(x)=\sum_{j=1}^{N_i}\frac{1}{N_i}a_{i,j}(w)\mathrm{dist}^2(x,y_{i,j})$, and $f_{i,j}(x)=\alpha_{i,j}(w)\mathrm{dist}^2(x,y_{i,j})$. 
The gradient of $f_{i,j}$ is given by $\mathrm{grad}f_{i,j}(x)=2\alpha_{i,j}(w)\mathrm{Exp}^{-1}_{x}(y_{i,j})$.

We note that $\|\mathrm{grad}f_{i,j}(x)\|_x=\|2\alpha_{i,j}(w)\mathrm{Exp}_x^{-1}(y_{i,j})\|_x\le 2|\alpha_{i,j}(w)|\mathrm{dist}(x,y_{i,j})\le 2|\alpha_{i,j}(w)|M$, which show that the geodesic Lipschitz constant is $L_f=2\max_{i,j}|\alpha_{i,j}(w)|M$. On the other hand, from~\cite[Lemma~2]{AOBL20} it follows that $\|\mathrm{Hess}\,\mathrm{dist}^2(x,y_{i,j})[v]\|_x\le L_1\|v\|_x$ for any $v\in \mathrm{T}_x \mathcal{H}^d$ with $L_1=\frac{\sqrt{|\kappa_{\min}|}M}{\tanh(\sqrt{|\kappa_{\min}|}M)}=\frac{M}{\tanh(M)}$, implying that $\|\mathrm{Hess}f_{i,j}(x)\|_x\le |\alpha_{i,j}(w)|L_1$. Therefore, the smooth constant $L_g$ is bounded as $L_g\le \max_{i,j}|\alpha_{i,j}(w)|\frac{M}{\tanh(M)}$ and we set $\tau=\tau_0=\tau_1=L_f$.

\paragraph{WordNet.} WordNet dataset~\cite{Mil95} is used to conduct the experiment of inferring hyperbolic embeddings. Following~\cite{NK17}, the pretrained hyperbolic embeddings on $\mathcal{H}^2$ of the mammals subtree with the transitive closure containing $n = 1180$ nodes (words) and $6540$ edges (hierarchies) are used\footnote{It is referred to website https://github.com/facebookresearch/poincare-embeddings.}. The features are stemmed from Laplacian eigenmap~\cite{BN03} to dimension $r=3$ of the adjacency matrix formed by the edges. In other words, we obtained $\{(w_i,y_i)\}_{i=1}^n\subset \mathbb{R}^3\times \mathcal{H}^2$. This setting is in line with the work in~\cite{HMJG22}. In the experiments, the word ``primate'' is selected as the test sample, and the remainder is used to train. Therefore, the hyperbolic embedding of the word ``primate'' is known and is viewed as the true embedding. We conduct the experiment under the scenarios: $N=9$ (thus $N_i=131$ for agent $i$), and set the problem parameters as $\gamma=10^{-5}$, $\bar{v}=0.3$, $L_f=\max_{i,j}|\alpha_{i,j}(w)|M$, $L_g=\max_{i,j}|\alpha_{i,j}(w)|\frac{M}{\tanh(M)}$ with $M=4$ (estimated from samples),  the privacy parameters as $\epsilon=0.15$, $\delta=10^{-4}$, and the algorithm parameters as $s_t=1$, $K=3$, $m=5$, and $\alpha_t=0.3$. The Riemannian distance averaged over 10 runs between the prediction and the true embedding on $\mathcal{H}^2$ and the values of cost averaged over 10 runs against the number of iterations are plotted in Figure~\ref{fig:NumExp:7}, respectively. 

\begin{figure}[ht]
\centering
\subfigure[]{
\includegraphics[width=0.35\textwidth]{./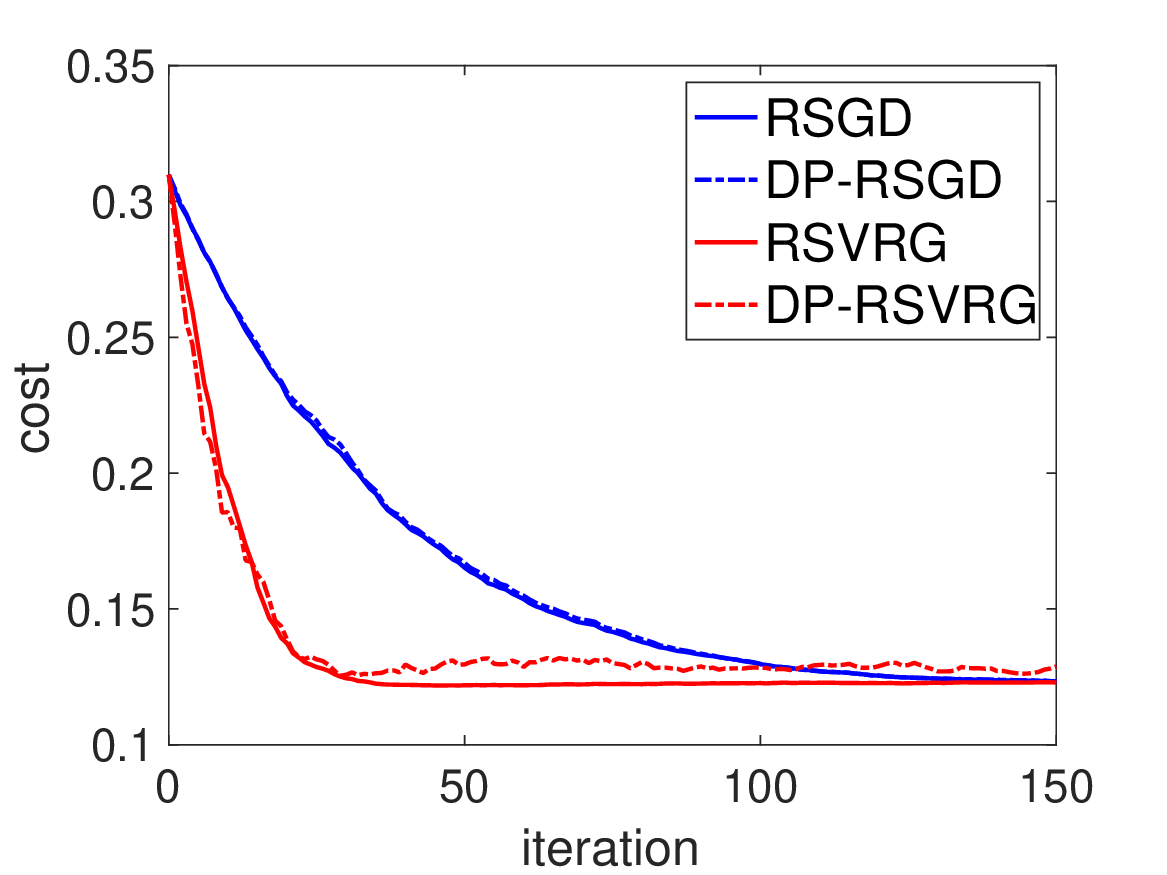}
\label{fig:NumExp:7-1}
}
\quad
\subfigure[]{
\includegraphics[width=0.35\textwidth]{./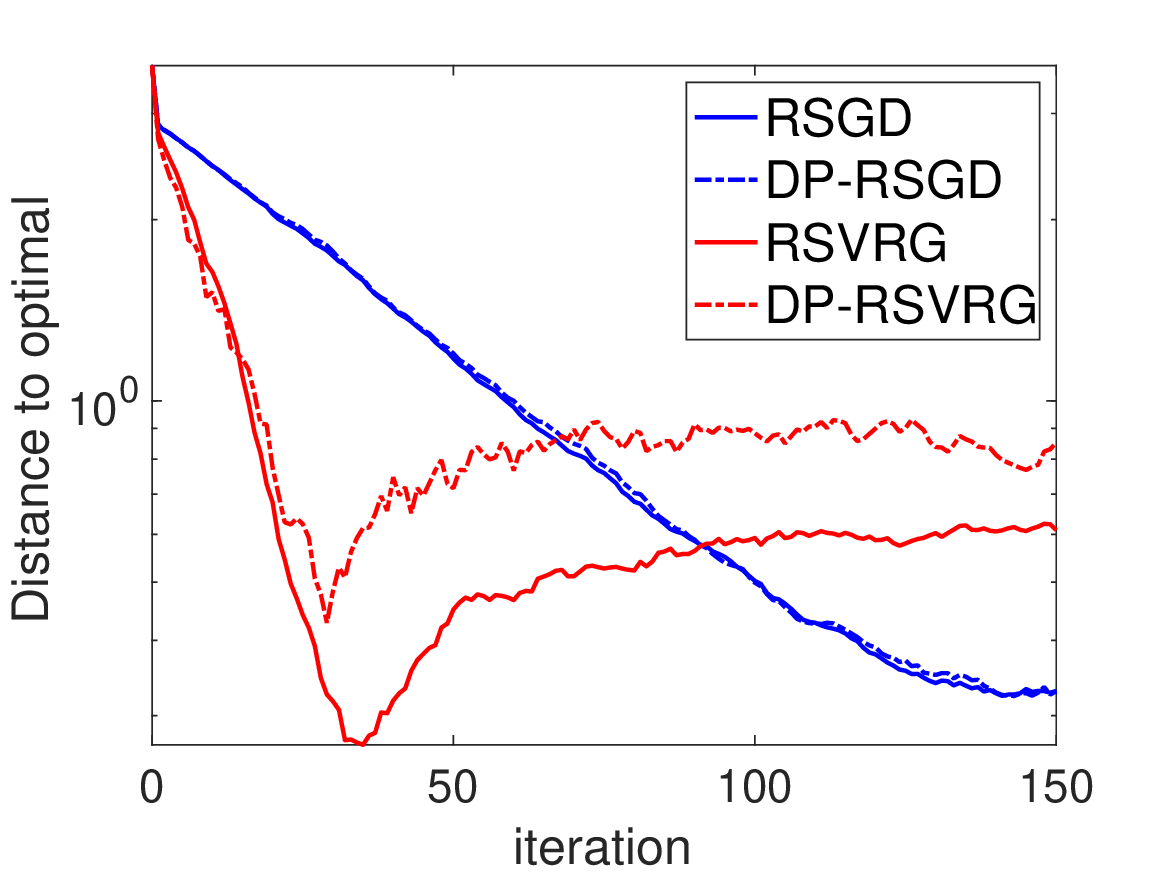}
\label{fig:NumExp:7-2}
}
\caption{Averaged results over $10$ tests for Problem~\eqref{NumExp:3} with WordNet dataset. The legends ``RSGD'', ``DP-RSGD'', ``RSVRG'' and ``DP-SRSGD'' refer respectively to PriRFed-RSGD, PriRFed-DP-RSGD, PriRFed-RSVRG and PriRFed-DP-RSVRG, respectively. (a) The values of cost against iterations. (b) The Riemannian distance against iterations.}

\label{fig:NumExp:7}
\end{figure}

We observe from Figure~\ref{fig:NumExp:7} that PriRFed-DP-RSGD and PriRFed-DP-RSVRG converges to the pretrained embedding while making the outputs private under $(\epsilon,\delta)$-differential privacy. Moreover, as expected, PriRFed-DP-RSVRG outperforms PriRFed-DP-RSGD in the sense of the number of iterations. 
It is worth noting that PriRFed with DP-RSVRG and RSVRG converge rapidly to acceptable solutions, and subsequently overfitting leads to that the iterates leave solutions.
To further illustrate the validation of the two combinations, we plot the hyperbolic embeddings on the Poincare disk in Figure~\ref{fig:NumExp:8}. We see from Figure~\ref{fig:NumExp:8-2} that the predictions are close to the optimal point (the true embedding), reserving the hierarchical structures as well as ensuring privacy.

\begin{figure}[H]
\centering
\subfigure[]{
\includegraphics[width=0.4\textwidth]{./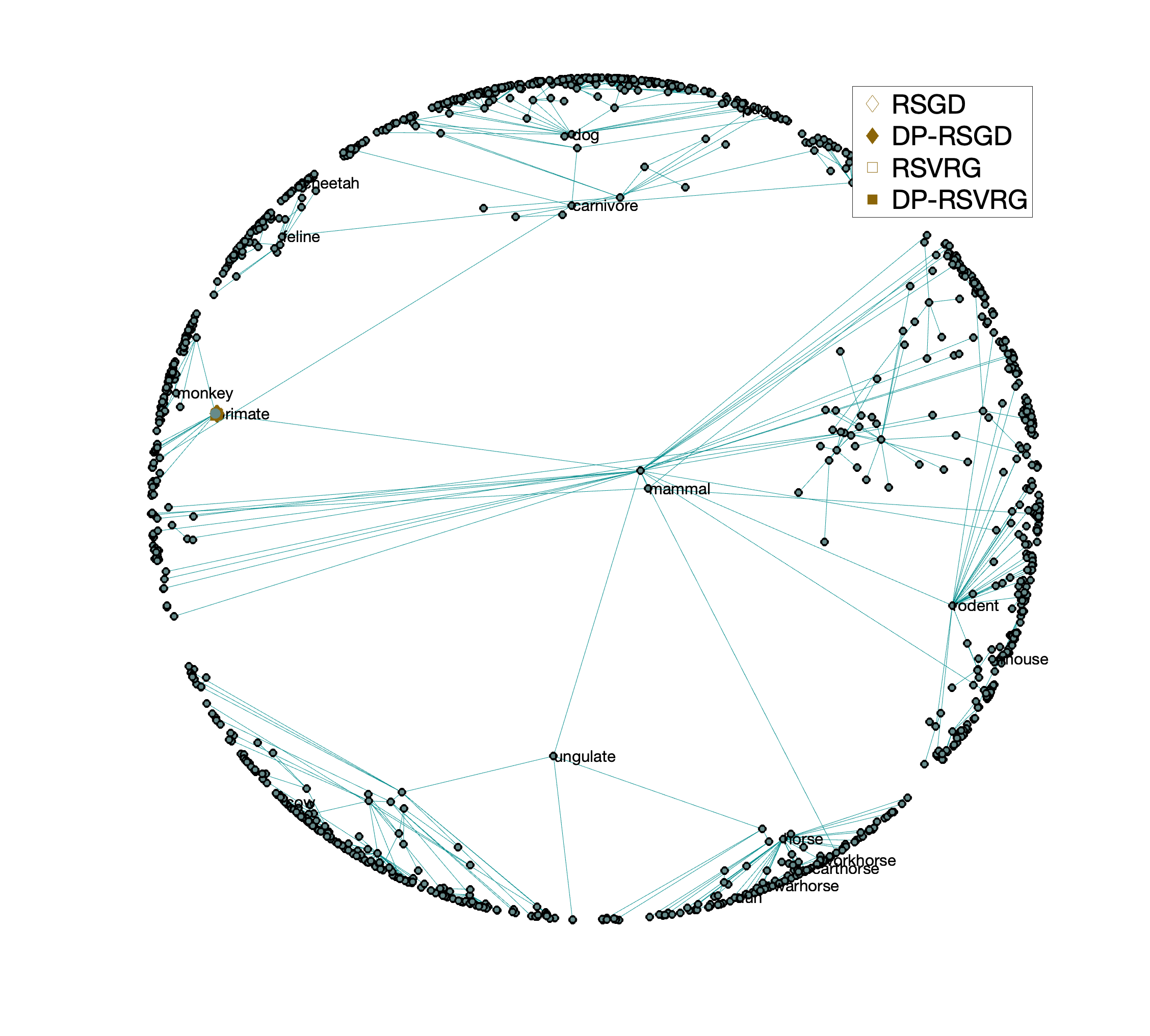}
\label{fig:NumExp:8-1}
}
\quad
\subfigure[]{
\includegraphics[width=0.4\textwidth]{./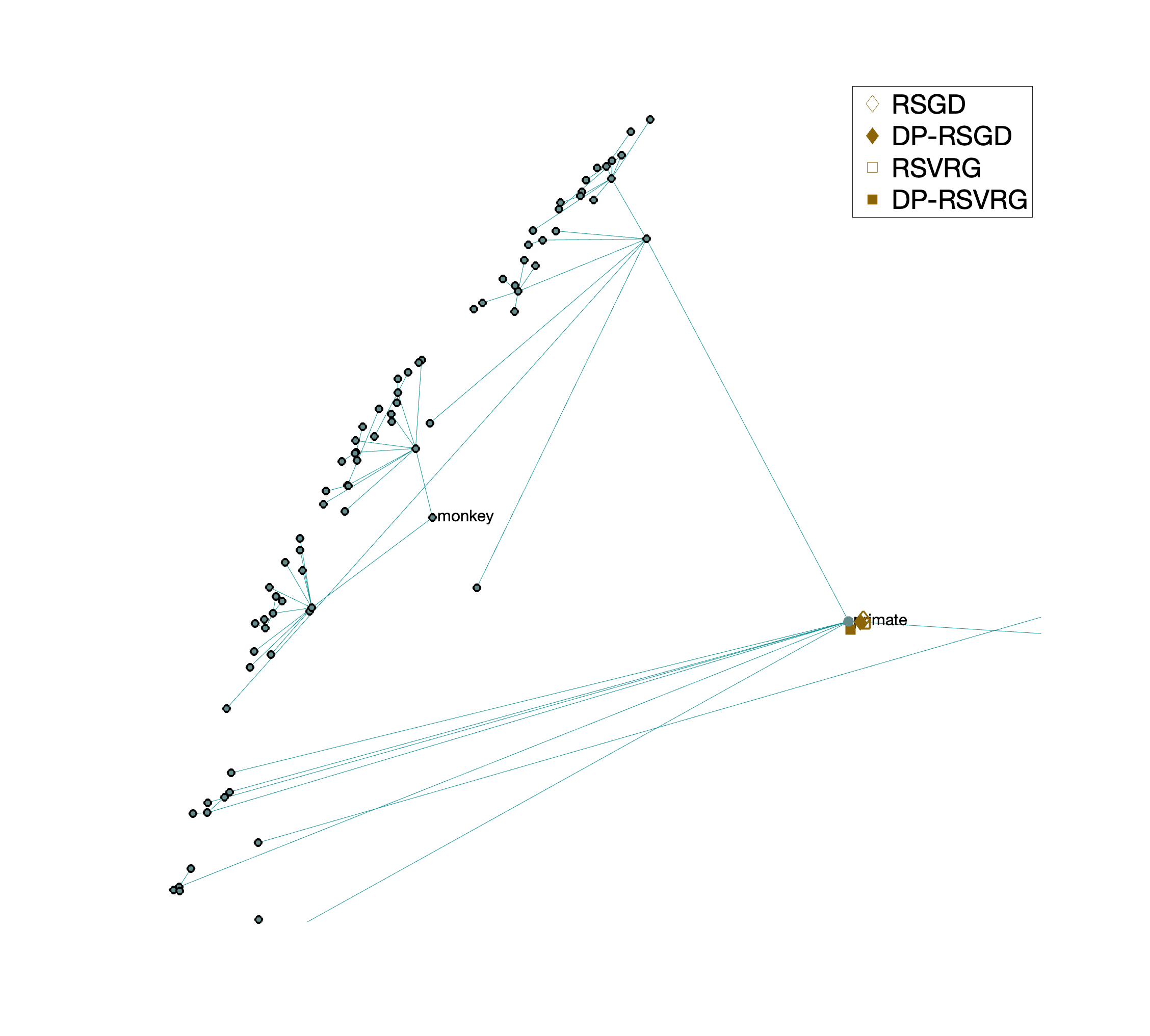}
\label{fig:NumExp:8-2}
}
\caption{The best hyperbolic structured predictions (Problem~\eqref{NumExp:3}) on ``primate'' during the $10$ tests with WordNet dataset. Here ``RSGD'', ``DP-RSGD'', ``RSVRG'' and ``DP-RSVRG'' refer respectively to PriRFed-RSGD, PriRFed-DP-RSGD, PriRFed-RSVRG and PriRFed-DP-RSVRG, respectively. (a) Full embeddings. (b) Partial embedddings for ``primate''.}

\label{fig:NumExp:8}
\end{figure}

\section{Conclusion} \label{sec:concl} 
In this paper, we generalized the framework of federated learning with differential privacy to Riemannian manifolds, proposing a Riemannian federated learning framework obeying differential privacy, termed PriRFed. We proved that with the assumption of local training procedures being differentially private, the proposed Riemannian federated learning framework satisfies differential privacy. This eliminates the necessity of the trusted server while ensuring that the final learned model is privacy-preserved. 
Additionally, we analyzed the convergence properties when the local training procedure is specified as DP-RSGD and DP-RSVRG. Extensive numerical experiments also reflect the validity and effectiveness of the proposed method. 
Although this marks an initial endeavor in privacy-preserving Riemannian federated learning, there is potential for enhancement, such as considering more efficient local training procedures and aggregation.

\appendix 

\section{Proofs for Section~\ref{sec:conv:DP-RSGD}} \label{appendix1}

\subsection{Proof of Theorem~\ref{th:conv:for DP-RSGD1}}

\begin{proof}
	With $K=1$, $b_i=N_i$, and $x_{i,0}^{(t)}=x^{(t)}$ it follows that
	$
		\eta_0^{(t)} = \mathrm{grad}f_i(x^{(t)}) + \varepsilon_{i,0}^{(t)},
	$
	and
	$
		\tilde{x}^{(t+1)}_i = x_{i,1}^{(t)}=\mathrm{Exp}_{x^{(t)}}(-\alpha\eta_0^{(t)})=\mathrm{Exp}_{x^{(t)}}(-\alpha\mathrm{grad}f_i(x^{(t)})-\alpha\varepsilon_{i,0}^{(t)}),
	$
	which is equivalent to
	$
	\mathrm{Exp}_{x^{(t)}}^{-1}(\tilde{x}_i^{(t+1)})=-\alpha\mathrm{grad}f_i(x^{(t)})-\alpha\varepsilon_{i,0}^{(t)}.
	$
	By aggregating formulation and $s_t=N$, we have
	$
		x^{(t+1)} = \mathrm{Exp}_{x^{(t)}}\left(\sum_{i=1}^Np_i\mathrm{Exp}_{x^{(t)}}^{-1}\left(\tilde{x}_i^{(t+1)}\right)\right),
	$
	that is,
	$
		\mathrm{Exp}_{x^{(t)}}^{-1}(x^{(t+1)}) = \sum_{i=1}^Np_i(-\alpha\mathrm{grad}f_i(x^{(t)})-\alpha\varepsilon_{i,0}^{(t)}) = -\alpha\mathrm{grad}f(x^{(t)}) - \alpha\sum_{i=1}^Np_i\varepsilon_{i,0}^{(t)}.
	$
	Since $K=1$, we omit the subscript $k$ for $\varepsilon_{i,k}^{(t)}$. Using $L_g$-smoothness of $f$ (Definition~\ref{def:geodsmooth}), we have
	\begin{equation} \label{th:conv:for DP-RSGD1:1}
	\begin{aligned}
		& f(x^{(t+1)}) - f(x^{(t)}) \le \left< \mathrm{Exp}^{-1}_{x^{(t)}}(x^{(t+1)}),\mathrm{grad}f(x^{(t)}) \right> + \frac{{L}_g}{2}d^2(x^{(t+1)},x^{(t)}) \\
		&= -\alpha\left<\mathrm{grad}f(x^{(t)}), \mathrm{grad}f(x^{(t)})\right> - \alpha\left<\sum_{i=1}^Np_i\varepsilon_{i}^{(t)}, \mathrm{grad}f(x^{(t)})\right> + \frac{{L}_g}{2}\alpha^2\|\mathrm{grad}f(x^{(t)})+\sum_{i=1}^Np_i\varepsilon_{i}^{(t)}\|^2 \\
		&= (\frac{{L}_g}{2}\alpha^2 -\alpha) \|\mathrm{grad}f(x^{(t)})\|^2 + \frac{{L}_g}{2}\alpha^2\|\sum_{i=1}^Np_i\varepsilon_i^{(t)}\|^2 + ({L}_g\alpha^2-\alpha)\left<\sum_{i=1}^Np_i\varepsilon_i^{(t)}, \mathrm{grad}f(x^{(t)})\right>,  
	\end{aligned}
	\end{equation}
in which taking expectation on both sides over $\varepsilon_i^{(t)}, \forall t$ yields 
\begin{equation} \label{th:conv:for DP-RSGD1:xx}
\begin{aligned}
	&\mathbb{E}[f(x^{(t+1)})-f(x^{(t)})] \\
 \le& (\frac{{L}_g}{2}\alpha^2-\alpha)\mathbb{E}[\|\mathrm{grad}f(x^{(t)})\|^2] + \frac{{L}_g}{2}\alpha^2 \mathbb{E}[\|\sum_{i=1}^Np_i\varepsilon_i^{(t)}\|^2] + ({L}_g\alpha^2-\alpha)\mathbb{E}\left[\left<\sum_{i=1}^Np_i\varepsilon_i^{(t)},\mathrm{grad}f(x^{(t)})\right>\right]  \\ 
 =& (\frac{{L}_g}{2}\alpha^2-\alpha)\mathbb{E}[\|\mathrm{grad}f(x^{(t)})\|^2] + \frac{{L}_g}{2}\alpha^2 \sum_{i=1}^Np_i^2 \mathbb{E}[\|\varepsilon_i^{(t)}\|^2]  = (\frac{{L}_g}{2}\alpha^2-\alpha)\mathbb{E}[\|\mathrm{grad}f(x^{(t)})\|^2] + \frac{{L}_gd\alpha^2}{2}\sum_{i=1}^Np_i^2\sigma_i^2, 
\end{aligned}
\end{equation}
since $\mathbb{E}[\varepsilon_i^{(t)}]=0$, $\mathbb{E}[\left<\varepsilon_i^{(t)},\mathrm{grad}f(x^{(t)})\right>]=0$ (as $\varepsilon_i^{(t)}$ is independent), and $\mathbb{E}[\|\varepsilon_i^{(t)}\|^2] = d\sigma_i^2$. With $\alpha\le{1}/{{L}_g}$, we have
$
	\frac{\alpha}{2} \mathbb{E}[\|\mathrm{grad}f(x^{(t)})\|^2] \le \mathbb{E}[f(x^{(t)}) - f(x^{(t+1)})] + \frac{dL_g\alpha^2}{2}\sum_{i=1}^Np_i^2\sigma_i^2.
$
Therefore,
$
		\sum_{t=0}^{T-1}\mathbb{E}[\|\mathrm{grad}f(x^{(t)})\|^2] \le \frac{2}{\alpha} \mathbb{E}[f(x^{(0)})-f(x^{(T)})] + {TdL_g\alpha} \sum_{i=1}^Np_i^2\sigma_i^2 \le \frac{2}{\alpha} \mathbb{E}[f(x^{(0)})-f(x^{*})] + \frac{dTL_g\alpha}{ (\sum_{i=1}^NN_i)^2}\sum_{i=1}^NN_i^2\sigma_i^2.
$
	due to $p_i=N_i/\sum_{i=1}^NN_i$.
\end{proof}

\subsection{Proof of Theorem~\ref{th:conv:for DP-RSGD1_}}
\begin{proof}
	Using~\eqref{th:conv:for DP-RSGD1:xx} together with conditions~\eqref{th:conv:for DP-RSGD1_:1}, we have 
$
	\mathbb{E}[f(x^{(t+1)}) - f(x^{(t)})] \le -\alpha_t(1-\delta)\mathbb{E}[\|\mathrm{grad}f({x}^{(t)})\|^2] + \frac{L_gd\alpha_t^2}{2(\sum_{i=1}^NN_i)^2}\sum_{i=1}^NN_i^2\sigma_i^2.
$
Summing the above inequality over $t=0,1,\dots,T-1$ and dividing the both sides by $(1-\delta)$ yields 
\begin{align} \label{th:conv:for DP-RSGD1_:xx}
	\mathbb{E}[\sum_{t=0}^{T-1}\alpha_t\|\mathrm{grad}f(x^{(t)})\|^2 ] \le \frac{\mathbb{E}[f(x^{(0)}) - f(x^*)]}{(1-\delta)}+\sum_{t=0}^{T-1}\frac{L_gd\alpha_t^2}{2(1-\delta)(\sum_{i=1}^NN_i)^2}\sum_{i=1}^NN_i^2\sigma_i^2,
\end{align}
where we use $f(x^{(0)}) - f(x^{(t)})\le f(x^{(0)})-f(x^*)$. 
Dividing~\eqref{th:conv:for DP-RSGD1_:xx} by $\sum_{t=0}^{T-1}\alpha_t$ on both sides gives rise to 
$
	\mathbb{E}\left[ \sum_{t=0}^{T-1}\frac{\alpha_t}{\sum_{t=0}^{T-1}\alpha_t} \|\mathrm{grad}f(x^{(t)})\|^2 \right] \le 	 \frac{\mathbb{E}[f(x^{(0)}) - f(x^*)]}{(1-\delta)\sum_{t=0}^{T-1}\alpha_t}+\sum_{t=0}^{T-1}\frac{L_gd\alpha_t^2}{2(1-\delta)(\sum_{t=0}^{T-1}\alpha_t)(\sum_{i=1}^NN_i)^2}\sum_{i=1}^NN_i^2\sigma_i^2,
$
which implies 
\begin{equation} \label{eq:01}
\lim_{T\rightarrow \infty}\mathbb{E}\left[ \sum_{t=0}^{T-1}\frac{\alpha_t}{\sum_{t=0}^{T-1}\alpha_t} \|\mathrm{grad}f(x^{(t)})\|^2 \right] = \lim_{T\rightarrow \infty}  \sum_{t=0}^{T-1}\frac{\alpha_t}{\sum_{t=0}^{T-1}\alpha_t} \mathbb{E}\left[\|\mathrm{grad}f(x^{(t)})\|^2 \right] =0
\end{equation}
since $\lim_{T\rightarrow \infty}\frac{1}{\sum_{t=0}^{T-1}\alpha_t}=0$, and $\lim_{T\rightarrow \infty}\frac{\sum_{t=0}^{T-1}\alpha_t^2}{\sum_{t=0}^{T-1}\alpha_t}=0$. Suppose $\liminf_{T \rightarrow \infty} \mathbb{E}\left[ \|\mathrm{grad}f(x^{(t)})\|^2 \right] \neq 0$. Then there exists a positive constant $\epsilon>0$ such that for all $t$, inequality $\mathbb{E}\left[ \|\mathrm{grad}f(x^{(t)})\|^2 \right] > \epsilon$ holds. It follows that
$
\sum_{t=0}^{T-1}\frac{\alpha_t}{\sum_{t=0}^{T-1}\alpha_t} \mathbb{E}\left[\|\mathrm{grad}f(x^{(t)})\|^2 \right] \geq \sum_{t=0}^{T-1}\frac{\alpha_t}{\sum_{t=0}^{T-1}\alpha_t} \epsilon = \epsilon > 0,
$
which conflicts with~\eqref{eq:01}.
\end{proof}

\subsection{Proof of Theorem~\ref{th:conv:for DP-RSGD3}}

\begin{lemma}[{\cite[Corollary~8]{ZS16}}]
\label{lem:App1}
For any Riemannian manifold $\mathcal{M}$ where the sectional curvature is lower bounded by $\kappa_{\min}$ and any point $x,x^{(t)}\in \mathcal{M}$, the update $x^{(t+1)}=\mathrm{Exp}_{x^{(t)}}(-\alpha g^{(t)})$ with $g^{(t)}\in \mathrm{T}_{x^{(t)}}\mathcal{M}$ satisfies 
\[
\left<-g^{(t)},\mathrm{Exp}_{x^{(t)}}^{-1}(x)\right> \le \frac{1}{2\alpha}(\mathrm{dist}^2(x,x^{(t)})-\mathrm{dist}^2(x,x^{(t+1)})) + \frac{\zeta(\kappa_{\min},\mathrm{dist}(x,x^{(t)}))\alpha}{2}\|g^{(t)}\|^2, 
\]
where $\zeta(\kappa,c)=\frac{\sqrt{|\kappa|c}}{\tanh(\sqrt{|\kappa|}c)}$.
\end{lemma}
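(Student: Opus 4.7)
The plan is to derive this as a direct corollary of a Toponogov-type trigonometric comparison inequality that already appears in Zhang--Sra~\cite{ZS16}. The geometric content is that on a manifold whose sectional curvature is bounded below by $\kappa_{\min}$, geodesic triangles obey a law-of-cosines estimate with a curvature-dependent correction factor $\zeta(\kappa_{\min},c)=\sqrt{|\kappa_{\min}|}\,c/\tanh(\sqrt{|\kappa_{\min}|}c)$ in front of the squared length that is opposite to the reference angle. Concretely, the statement I would invoke (stated as Lemma~5 in~\cite{ZS16}) reads: for a geodesic triangle with side lengths $a,b,c$ and with $A$ the angle opposite to the side of length $a$,
\[
a^{2} \;\le\; \zeta(\kappa_{\min},c)\,b^{2} + c^{2} - 2bc\cos A.
\]

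First I would instantiate this inequality on the geodesic triangle with vertices $x^{(t)}$, $x^{(t+1)}$, $x$. Set $b=\mathrm{dist}(x^{(t)},x^{(t+1)})=\alpha\|g^{(t)}\|$ (using $x^{(t+1)}=\mathrm{Exp}_{x^{(t)}}(-\alpha g^{(t)})$ and that the exponential map preserves the length of the defining tangent vector), $c=\mathrm{dist}(x,x^{(t)})$, $a=\mathrm{dist}(x,x^{(t+1)})$, and let $A$ be the angle at $x^{(t)}$ between the initial velocities of the two geodesics emanating from $x^{(t)}$: one pointing to $x^{(t+1)}$ with initial velocity $-\alpha g^{(t)}$, the other pointing to $x$ with initial velocity $\mathrm{Exp}_{x^{(t)}}^{-1}(x)$. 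By the definition of the Riemannian angle,
\[
\cos A \;=\; \frac{\bigl\langle -\alpha g^{(t)},\,\mathrm{Exp}_{x^{(t)}}^{-1}(x)\bigr\rangle}{\alpha\|g^{(t)}\|\cdot\mathrm{dist}(x,x^{(t)})},
\]
so the cross term satisfies $2bc\cos A = 2\bigl\langle -\alpha g^{(t)},\mathrm{Exp}_{x^{(t)}}^{-1}(x)\bigr\rangle$.

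Substituting these identifications into the comparison inequality yields
\[
\mathrm{dist}^{2}(x,x^{(t+1)}) \;\le\; \zeta\bigl(\kappa_{\min},\mathrm{dist}(x,x^{(t)})\bigr)\,\alpha^{2}\|g^{(t)}\|^{2} + \mathrm{dist}^{2}(x,x^{(t)}) - 2\bigl\langle -\alpha g^{(t)},\,\mathrm{Exp}_{x^{(t)}}^{-1}(x)\bigr\rangle.
\]
Rearranging for the inner-product term and dividing by $2\alpha$ produces exactly the claimed bound, so the last step is purely algebraic. The only nontrivial ingredient is the Toponogov-type inequality itself, whose proof in~\cite{ZS16} proceeds by comparison with a model space of constant curvature $\kappa_{\min}$ and careful use of the hyperbolic law of cosines together with the monotonicity of $t\mapsto t/\tanh t$; since the excerpt explicitly cites this corollary, I would import that inequality as a known fact rather than reprove it. The main obstacle, if one wanted to be fully self-contained, is therefore establishing the comparison inequality, but within the scope of the present paper the lemma is simply obtained by applying comparison and reading off the coefficients.
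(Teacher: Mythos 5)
Your derivation is correct, and it is essentially the proof of the cited result: the paper itself does not prove Lemma~\ref{lem:App1} but imports it from \cite[Corollary~8]{ZS16}, and your argument---applying the trigonometric comparison inequality (recorded in this paper as Lemma~\ref{lem:App2}) to the geodesic triangle with vertices $x^{(t)}$, $x^{(t+1)}$, $x$, identifying $b=\alpha\|g^{(t)}\|$, $c=\mathrm{dist}(x,x^{(t)})$ and $2bc\cos A = 2\langle -\alpha g^{(t)},\,\mathrm{Exp}_{x^{(t)}}^{-1}(x)\rangle$, then rearranging and dividing by $2\alpha$---is exactly how that corollary is obtained in the source. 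The only implicit hypothesis worth flagging is that the three points lie in a (totally) normal neighborhood so that $\mathrm{Exp}_{x^{(t)}}^{-1}$ and the angle $A$ are well defined, which this paper assumes throughout for all iterates.
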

Now we are ready to prove Theorem~\ref{th:conv:for DP-RSGD3}.
\begin{proof}
 By Lemma~\ref{lem:App1}, it holds that 
	\begin{equation}
	\begin{aligned}			
		-\alpha \left<\sum_{i=1}^N p_i \mathrm{grad}f_i(x^{(t)})+p_i\varepsilon_i^{(t)},\mathrm{Exp}_{x^{(t)}}^{-1} (x)\right> &\le \frac{1}{2}(\mathrm{dist}^2(x^{(t)},x) - \mathrm{dist}^2(x^{(t+1)},x)) \\ 
		& \quad + \frac{\zeta \alpha^2}{2}\left\|\sum_{i=1}^Np_i \mathrm{grad}f_i(x^{(t)}) + p_i \varepsilon_i^{(t)} \right\|^2. 
	\end{aligned}
	\end{equation}
	Let $\Delta_i^t = f_i(x^{(t)})-f_i(x^*)$, and $\Delta^t = \sum_{i=1}^Np_i\Delta_i^{t}=f(x^{(t)})-f(x^*)$ where $x^*=\argmin_{x\in \mathcal{M}}f(x)$. It follows from the geodesic convexity of $f_i$ that $\Delta_i^{t}\le -\left<\mathrm{grad}f_i(x^{(t)}),\mathrm{Exp}_{x^{(t)}}^{-1}(x^*)\right>$. 
	Summing this inequality over $i$ from $1$ to $N$ yields 
	\begin{equation} \label{th:conv:for DP-RSGD3:1}
		\begin{aligned}
		\Delta^t &\le -\left<\sum_{i=1}^Np_i \mathrm{grad}f_i(x^{(t)}), \mathrm{Exp}_{x^{(t)}}^{-1}(x^*)\right> \\ 
		& \le \frac{1}{\alpha}\left<\sum_{i=1}^Np_i\varepsilon_i^{(t)},\mathrm{Exp}_{x^{(t)}}^{-1}(x^*)\right> + \frac{1}{2\alpha}(\mathrm{dist}^2(x^{(t)},x^*)-\mathrm{dist}^2(x^{(t+1)},x^*)) + \frac{\zeta \alpha}{2}\|\mathrm{grad}f(x^{(t)})\|^2 
		\\ 
			 & \quad  + \frac{\zeta \alpha}{2}\left\|\sum_{i=1}^Np_i \varepsilon_i^{(t)}\right\|^2 + \zeta \alpha \left<\mathrm{grad}f(x^{(t)}),\sum_{i=1}^Np_i\varepsilon_i^{(t)}\right> \\ 
		&= \frac{1}{2\alpha}(\mathrm{dist}^2(x^{(t)},x^*)-\mathrm{dist}^2(x^{(t+1)},x^*)) + \frac{\zeta \alpha}{2}\|\mathrm{grad}f(x^{(t)})\|^2 + \mathrm{Au}_i^t,
		\end{aligned}		
	\end{equation}
	where $\mathrm{Au}_i^t =\frac{1}{\alpha}\left<\sum_{i=1}^Np_i\varepsilon_i^{(t)},\mathrm{Exp}_{x^{(t)}}^{-1}(x^*)\right>+ \frac{\zeta \alpha}{2}\left\|\sum_{i=1}^Np_i \varepsilon_i^{(t)}\right\|^2 + \zeta \alpha \left<\mathrm{grad}f(x^{(t)}),\sum_{i=1}^Np_i\varepsilon_i^{(t)}\right>$. 
	From Inequality~\eqref{th:conv:for DP-RSGD1:1}, we have 
	\begin{equation} \label{th:conv:for DP-RSGD3:2}
		\Delta^{t+1} - \Delta^t \le (\frac{{L}_g}{2}\alpha^2 -\alpha) \left\|\mathrm{grad}f(x^{(t)})\right\|^2 + \frac{{L}_g}{2}\alpha^2 \left\|\sum_{i=1}^Np_i\varepsilon_i^{(t)}\right\|^2 + ({L}_g\alpha^2-\alpha)\left<\sum_{i=1}^Np_i\varepsilon_i^{(t)},\mathrm{grad}f(x^{(t)})\right>.
	\end{equation}
	Multiplying Inequality~\eqref{th:conv:for DP-RSGD3:2} by $\zeta$ and adding it into~\eqref{th:conv:for DP-RSGD3:1}, we get 
	\begin{equation} \label{th:conv:for DP-RSGD3:3}
	 \zeta\Delta^{t+1} - (\zeta-1)\Delta^t \le \frac{1}{2\alpha}(\mathrm{dist}^2(x^{(t)},x^*)-\mathrm{dist}^2(x^{(t+1)},x^*)) + \frac{\zeta}{2} ({L}_g\alpha^2-\alpha)\|\mathrm{grad}f(x^{(t)})\|^2 + \tilde{\mathrm{Au}}_i^t,
	\end{equation} 
	where $\tilde{\mathrm{Au}}_i^t=\frac{1}{\alpha}\left<\sum_{i=1}^Np_i\varepsilon_i^{(t)},\mathrm{Exp}_{x^{(t)}}^{-1}(x^*)\right>+ \frac{\zeta}{2}({L}_g\alpha^2 + \alpha)\left\|\sum_{i=1}^Np_i \varepsilon_i^{(t)}\right\|^2 + \zeta {L}_g\alpha^2 \left<\mathrm{grad}f(x^{(t)}),\sum_{i=1}^Np_i\varepsilon_i^{(t)}\right>$.
	Since $\alpha \le 1/(2{L}_g)$, we have ${L}_g\alpha^2-\alpha\le0$, and therefore 
$
	 \zeta\Delta^{t+1} - (\zeta-1)\Delta^t \le \frac{1}{2\alpha}(\mathrm{dist}^2(x^{(t)},x^*)-\mathrm{dist}^2(x^{(t+1)},x^*)) + \tilde{\mathrm{Au}}_i^t.
$
	Summing this up over $t$ from $0$ to $T-1$, we have 
	\begin{equation} \label{eq:03}
	\begin{aligned} 
		\zeta \Delta^{T} + \sum_{t=1}^{T-1}\Delta^t &\le (\zeta-1)\Delta^0 + \frac{\mathrm{dist}^2(x^{(0)},x^*)}{2\alpha} + \sum_{t=0}^{T-1}\tilde{\mathrm{Au}}_i^t 
		 \le \frac{\zeta\mathrm{dist}^2(x^{(0)},x^*)}{2\alpha} + 
		\sum_{t=0}^{T-1}\tilde{\mathrm{Au}}_i^t,
	\end{aligned}
	\end{equation}
 where the last inequality follows from $\alpha\le 1/(2L_g)$ and $\Delta^0\le L_g \mathrm{dist}(x^{(0)},x^*)$.
	Taking expectation over $\varepsilon_i^{(t)}, \forall t$ for~\eqref{th:conv:for DP-RSGD3:2} and $\tilde{\mathrm{Au}}_i$ yields 
	\begin{align*}
		&\mathbb{E}[\Delta^{t}] \ge \mathbb{E}[\Delta^{t+1}] - \frac{1}{8{L}_g}\sum_{i=1}^Np_i^2\mathbb{E}[\|\varepsilon_i^{(t)}\|^2] \ge \mathbb{E}[\Delta^{t+1}] - \frac{1}{8{L}_g}\sum_{i=1}^Ndp_i^2\sigma_i^2, \\
		&\mathbb{E}[\tilde{\mathrm{Au}}_i^{t}]\le \frac{3\zeta}{8{L}_g}\sum_{i=1}^Np_i^2\mathbb{E}[\|\varepsilon_i^{(t)}\|^2] \le \frac{3\zeta}{8{L}_g}\sum_{i=1}^Ndp_i^2\sigma_i^2,
	\end{align*}
	since $\mathbb{E}[\varepsilon_i^{(t)}]=0$, $\mathbb{E}[\left<\varepsilon_i^{(t)}, \mathrm{Exp}_{x^{(t)}}^{-1}(x^*)\right>]=0$, $\mathbb{E}[\left<\varepsilon_i^{(t)},\mathrm{grad}f(x^{(t)})\right>]=0$, and $\mathbb{E}[\|\varepsilon_i^{(t)}\|^2]= d\sigma_i^2$. Therefore, using $\mathbb{E}[\Delta^t]\ge \mathbb{E}[\Delta^T] - \frac{(T-t)}{8{L}_g}\sum_{i=1}^Ndp_i^2\sigma_i^2$ for~\eqref{eq:03} gives
	$
		(\zeta+T-1)\mathbb{E}[\Delta^T] \le \frac{\zeta\mathrm{dist}^2(x^{(0)},x^*)}{2\alpha} + \frac{3\zeta T}{8{L}_g}\sum_{i=1}^Ndp_i^2\sigma_i^2 +  \frac{T(T-1)}{16{L}_g}\sum_{i=1}^Ndp_i^2\sigma_i^2,
	$
	which implies that
	$
		\mathbb{E}[f(x^{(T)})-f(x^*)] \le \frac{\zeta\mathrm{dist}^2(x^{(0)},x^*)}{2\alpha(\zeta + T - 1)} + \frac{T(6\zeta + T-1)}{16{L}_g(\zeta + T - 1)}\sum_{i=1}^Ndp_i^2\sigma_i^2.
	$
\end{proof}

\subsection{Proof of Theorem~\ref{th:conv:for DP-RSGD4}}

\begin{proof}
We have $\mathrm{Exp}_{x^{(t)}}^{-1}(x^{(t+1)})=\sum_{i\in \mathcal{S}_t}\bar{p}_i \mathrm{Exp}_{x^{(t)}}^{-1}(\tilde{x}_i^{(t+1)})=-\alpha\sum_{i\in \mathcal{S}_t}\bar{p}_i(\mathrm{grad}f_i(x^{(t)})+\varepsilon_i^{(t)})$, where $\bar{p}_i=N_i/(\sum_{i\in \mathcal{S}_t}N_i)$.
It follows from $L_g$-smoothness of $f$ that
	\begin{equation}		
	\begin{aligned} \label{th:conv:for DP-RSGD4:1}
	        & 		f(x^{(t+1)}) - f(x^{(t)}) \\ 
 		\le & \left<\mathrm{Exp}_{x^{(t)}}^{-1}(x^{(t+1)}),\mathrm{grad}f(x^{(t)})\right> + \frac{{L}_g}{2}\|\mathrm{Exp}_{x^{(t)}}^{-1}(x^{(t+1)})\|^2  \\ 
		= & -\alpha\left<\sum_{i\in \mathcal{S}_t}\bar{p}_i \mathrm{grad}f_i(x^{(t)}) + \sum_{i\in \mathcal{S}_t}\bar{p}_i\varepsilon_i^{(t)},\mathrm{grad}f(x^{(t)})\right> + \frac{{L}_g}{2}\alpha^2\|\sum_{i\in \mathcal{S}_t}\bar{p}_i \mathrm{grad}f_i(x^{(t)})+ \sum_{i\in \mathcal{S}_t}\bar{p}_i\varepsilon_i^{(t)}\|^2 \\ 
		 = & -\alpha \left<\sum_{i\in \mathcal{S}_t}\bar{p}_i \mathrm{grad}f_i(x^{(t)}),\mathrm{grad}f(x^{(t)})\right> + \frac{{L}_g}{2}\alpha^2\|\sum_{i\in \mathcal{S}_t}\bar{p}_i \mathrm{grad}f_i(x^{(t)})\|^2  + \frac{{L}_g}{2}\alpha^2 \|\sum_{i\in \mathcal{S}_t}\bar{p}_i\varepsilon_i^{(t)}\|^2 \\ 
		& \quad - \alpha\left<\sum_{i\in \mathcal{S}_t}\bar{p}_i\varepsilon_i^{(t)}, \mathrm{grad}f(x^{(t)})\right> + {L}_g\alpha^2 \left<\sum_{i\in \mathcal{S}_t}\bar{p}_i \mathrm{grad}f_i(x^{(t)}),\sum_{i\in \mathcal{S}_t}\bar{p}_i\varepsilon_i^{(t)}\right>.
	\end{aligned}
	\end{equation} 
Taking the expectation over the subsampling $\mathcal{S}_t$ and $\varepsilon_i^{(t)}$ for the loss function $f(x^{(t+1)})$ yields
\begin{equation}
	\begin{aligned} \label{th:conv:for DP-RSGD4:2}
		\mathbb{E}[f(x^{(t+1)})] \le f(x^{(t)}) - \alpha\|\mathrm{grad}f(x^{(t)})\|^2 + \frac{{L}_g}{2}\alpha^2 \mathbb{E}\left[\left\|\sum_{i\in \mathcal{S}_t}\bar{p}_i \mathrm{grad}f_i(x^{(t)})\right\|^2\right] + \frac{{L}_g}{2} \alpha^2\mathbb{E}\left[\left\|\sum_{i\in \mathcal{S}_t}\bar{p}_i\varepsilon_i^{(t)}\right\|^2\right],
	\end{aligned}
\end{equation}
since $\mathbb{E}[\varepsilon_i^{t}]=0$. By the assumption that $N_i=N_j$ for all $i,j\in[N]$, we have $\bar{p}_i={1}/{S}$. It follows that 
\begin{equation}
	\begin{aligned} \label{th:conv:for DP-RSGD4:3}
	&\quad \mathbb{E}[\|\sum_{i\in \mathcal{S}_t}\bar{p}_i \mathrm{grad}f_i(x^{(t)})\|^2]  \\ 
	&= \frac{1}{NS}\sum_{i=1}^N \|\mathrm{grad}f_i(x^{(t)})\|^2 + \frac{S-1}{NS(N-1)}\sum_{i=1}^N\sum_{j=1,j\not=i}^N\left<\mathrm{grad}f_i(x^{(t)}),\mathrm{grad}f_j(x^{(t)})\right> \\ 
	& = \left( \frac{1}{NS} - \frac{S-1}{NS(N-1)} \right)\sum_{i=1}^N\|\mathrm{grad}f_i(x^{(t)})\|^2 + \frac{S-1}{NS(N-1)}\left<\sum_{i=1}^N\mathrm{grad}f_i(x^{(t)}),\sum_{i=1}^N\mathrm{grad}f_i(x^{(t)})\right> \\ 
	&= \frac{N-S}{NS(N-1)}\sum_{i=1}^N\|\mathrm{grad}f_i(x^{(t)})\|^2 + \frac{N(S-1)}{S(N-1)}\|\mathrm{grad}f(x^{(t)})\|^2 .
\end{aligned}
\end{equation}
From Assumption~\ref{ass:4}, it follows that 
$
	\mathbb{E}[v_i]=\frac{1}{N}\sum_{i=1}^Nv_i\ge \frac{1}{N}\sum_{i=1}^N\|\mathrm{grad}f_i(x^{(t)})- \mathrm{grad}f(x^{(t)})\|^2.$
Plugging this inequality 
into Inequality~\eqref{th:conv:for DP-RSGD4:3} yields
\begin{equation}
	\begin{aligned} \label{th:conv:for DP-RSGD4:5}
	& \quad \mathbb{E}[\|\sum_{i\in \mathcal{S}_t}\bar{p}_i \mathrm{grad}f_i(x^{(t)})\|^2]\\ 
	 &= \frac{N-S}{NS(N-1)}\sum_{i=1}^N\|\mathrm{grad}f_i(x^{(t)}) - \mathrm{grad}f(x^{(t)}) + \mathrm{grad}f(x^{(t)})\| ^2 + \frac{N(S-1)}{S(N-1)}\|\mathrm{grad}f(x^{(t)})\|^2 \\ 
	&\le \frac{N-S}{S(N-1)}v + \|\mathrm{grad}f(x^{(t)})\|^2,
\end{aligned}
\end{equation}
where the last inequality follows from $\inner[]{ \sum_{i = 1}^N (\grad f_i(x^{(t)}) - \grad f(x^{(t)})) }{\grad f(x^{(t)})} =0$.
Plugging Inequality~\eqref{th:conv:for DP-RSGD4:5} into Inequality~\eqref{th:conv:for DP-RSGD4:2} and substracting $f(x^*)$ from the result yields 
\begin{equation} \label{th:conv:for DP-RSGD4:6}
		\mathbb{E}[f(x^{(t+1)})] - f(x^*) \le f(x^{(t)}) - f(x^*) + \alpha(\frac{{L}_g\alpha}{2}-1)\|\mathrm{grad}f(x^{(t)})\|^2 + \frac{{L}_g}{2}\alpha^2 \mathbb{E}[\|\sum_{i\in \mathcal{S}_t}\bar{p}_i\varepsilon_i^{(t)}\|^2] + \frac{{L}_g\alpha^2(N-S)}{2S(N-1)}v.
\end{equation}
Using the RPL condition, taking total expectation and applying Inequality~\eqref{th:conv:for DP-RSGD4:6} recursively, we have 
\begin{align*} 
	\mathbb{E}[f(x^{(t)})] - f(x^*) & \le  P^t(f(x^{(0)})-f(x^*)) +  (\sum_{t=0}^{T} P^t) \left(\frac{{L}_g}{2}\alpha^2 \mathbb{E}[\|\sum_{i\in \mathcal{S}_t}\bar{p}_i\varepsilon_i^{(t)}\|^2] + \frac{{L}_g\alpha^2(N-S)}{2S(N-1)}v\right) \\ 
	& \le  P^t(f(x^{(0)})-f(x^*)) + \frac{(1-P^t)}{\mu(2-{L}_g\alpha)}\left(\frac{{L}_g}{2}\alpha \mathbb{E}[\|\sum_{i\in \mathcal{S}_t}\bar{p}_i\varepsilon_i^{(t)}\|^2] + \frac{{L}_g\alpha(N-S)}{2S(N-1)}v\right).
\end{align*}
The final result follows from $\mathbb{E}[\|\varepsilon_i^{(t)}\|^2] = d\sigma_i^2$.
\end{proof}

\subsection{Proof of Theorem~\ref{th:conv:for DP-RSGD2}} \label{app:lem1}
The proof of Theorem~\ref{th:conv:for DP-RSGD2} depends on Lemma~\ref{lem:App2} and~\ref{lem:App3}. 
We list them here for completeness, and the details are referred to~\cite[Lemma~6]{ZS16},~\cite[Lemma~1,~4]{HMJG22}.
\begin{lemma}[{\cite[Lemma~6]{ZS16}~\cite[Lemma~1]{HMJG22}}] \label{lem:App2}
Let $x_1,x_2,x_3\in \mathcal{W}\subseteq \mathcal{M}$ where $\mathcal{W}$ is a totally normal neighbourhood of $\mathcal{M}$ with section curvature lower bounded by $\kappa_{\min}$, $l_1=\mathrm{dist}(x_1,x_2)$, $l_2=\mathrm{dist}(x_2,x_3)$, and $l_3=\mathrm{dist}(x_1,x_3)$. Let $A$ be the angle on 
$\mathrm{T}_{x_1} \mathcal{M}$ such that $\cos(A)=\frac{1}{l_{1}l_{3}} \left<  \mathrm{Exp}_{x_1}^{-1}(x_2),\mathrm{Exp}_{x_1}^{-1}(x_3)\right>$. Let $D_{\mathcal{W}}$ be the diameter of $\mathcal{W}$, i.e., $D_{\mathcal{W}}:= \max_{x,x'\in \mathcal{W}}\mathrm{dist}(x,x')$. Then we have $l_2^2 \le \zeta l_1^2 + l_3^2 - 2l_1l_3\cos(A)$, where $\zeta = \sqrt{|\kappa_{\min}|}D_{\mathcal{W}}/\tanh(\sqrt{|\kappa_{\min}|}D_{\mathcal{M}})$ if $\kappa_{\min}<0$ and $\zeta=1$ otherwise.  
\end{lemma}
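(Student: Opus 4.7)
The plan is to invoke Toponogov's hinge comparison theorem from Riemannian comparison geometry and thereby reduce the claim to an algebraic inequality in a two-dimensional model space of constant curvature. Since the sectional curvature of $\mathcal{M}$ inside $\mathcal{W}$ is bounded below by $\kappa_{\min}$, Toponogov's theorem provides a comparison triangle $\tilde x_1\tilde x_2\tilde x_3$ in the model space $M_\kappa^2$ of constant curvature $\kappa=\min(\kappa_{\min},0)$ with $\mathrm{dist}(\tilde x_1,\tilde x_2)=l_1$, $\mathrm{dist}(\tilde x_1,\tilde x_3)=l_3$, and vertex angle $A$ at $\tilde x_1$, whose opposite side $\tilde l_2:=\mathrm{dist}(\tilde x_2,\tilde x_3)$ satisfies $l_2\le \tilde l_2$. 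It therefore suffices to establish $\tilde l_2^2\le \zeta l_1^2+l_3^2-2l_1l_3\cos A$ inside the model space.

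When $\kappa_{\min}\ge 0$ I take $\kappa=0$ so that the model space is Euclidean, and the classical Euclidean law of cosines yields $\tilde l_2^2=l_1^2+l_3^2-2l_1l_3\cos A$ as an equality, matching the claim with $\zeta=1$. The substantive case is $\kappa_{\min}<0$: setting $k=\sqrt{|\kappa_{\min}|}$, the model space is the hyperbolic plane of constant curvature $-k^2$, and the hyperbolic law of cosines reads
\begin{equation*}
    \cosh(k\tilde l_2)=\cosh(kl_1)\cosh(kl_3)-\sinh(kl_1)\sinh(kl_3)\cos A .
\end{equation*}
What remains is to derive the quadratic inequality $\tilde l_2^2\le \zeta l_1^2+l_3^2-2l_1l_3\cos A$ from this identity, uniformly for $0\le l_1,l_3\le D_{\mathcal{W}}$.

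For this step I introduce $F(t):=\zeta t^2+l_3^2-2tl_3\cos A-\tilde l_2(t)^2$ on $[0,D_{\mathcal{W}}]$, where $\tilde l_2(t)$ is determined implicitly by the hyperbolic law of cosines with $l_1$ replaced by $t$. Implicit differentiation gives $\tilde l_2(0)=l_3$, $\tilde l_2'(0)=-\cos A$, and $\tilde l_2''(0)=k\coth(kl_3)\sin^2 A$, from which $F(0)=F'(0)=0$ and $F''(0)=2\zeta-2\cos^2 A-2(kl_3)\coth(kl_3)\sin^2 A$. Because the map $s\mapsto s\coth s$ is strictly increasing on $(0,\infty)$ and $l_3\le D_{\mathcal{W}}$, one has $(kl_3)\coth(kl_3)\le (kD_{\mathcal{W}})\coth(kD_{\mathcal{W}})=\zeta$, and hence $F''(0)\ge 2(\zeta-1)\cos^2 A\ge 0$. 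The main obstacle is upgrading this infinitesimal nonnegativity at $t=0$ to a global statement $F(t)\ge 0$ throughout $[0,D_{\mathcal{W}}]$; I would accomplish this by showing that the same monotonicity $(ks)\coth(ks)\le \zeta$ propagates to $F''(t)\ge 0$ on the entire interval (making $F$ convex with vanishing value and derivative at the origin), or equivalently by appealing to the standard Alexandrov-type trigonometric distance comparison recorded in the references cited alongside this lemma.
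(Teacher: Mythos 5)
You should first be aware that the paper contains no proof of this lemma at all: Appendix~A states only that it is ``listed here for completeness'' with the details referred to \cite[Lemma~6]{ZS16} and \cite[Lemma~1]{HMJG22}. So the only meaningful benchmark is the proof in those cited sources, and your route coincides with it: the Zhang--Sra argument is exactly a hinge (Toponogov/law-of-cosines) comparison to the constant-curvature model, followed by a second-order Taylor bound in the side $l_1$ --- which is your function $F$. (Their $\zeta$ uses the adjacent side $l_3$ in place of the diameter; the paper's version, and yours, simply replaces $l_3$ by the upper bound $D_{\mathcal{W}}$ using the monotonicity of $s\mapsto s\coth s$, which is harmless. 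The $D_{\mathcal{M}}$ appearing in the paper's statement of $\zeta$ is a typo for $D_{\mathcal{W}}$.) Your reduction steps are all sound: the hinge version of Toponogov with lower curvature bound gives $l_2\le\tilde l_2$, the $\kappa_{\min}\ge 0$ case correctly degrades to the Euclidean law of cosines with $\zeta=1$, and your computed values $\tilde l_2(0)=l_3$, $\tilde l_2'(0)=-\cos A$, $\tilde l_2''(0)=k\coth(kl_3)\sin^2A$ and $F''(0)\ge 2(\zeta-1)\cos^2A$ are all correct.

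The one genuine gap is the deferred claim that $F''(t)\ge 0$ on all of $[0,D_{\mathcal{W}}]$, which you assert ``propagates'' without verification; this is in fact the entire analytic content of the cited lemma, so you should know it does hold and how to close it. Normalize $k=1$ by scaling. With $u(t)=\cosh\tilde l_2(t)=\cosh t\cosh l_3-\sinh t\sinh l_3\cos A$ one has $u''=u$ and the invariant $u^2-(u')^2\equiv 1+m^2$ with $m:=\sinh l_3\sin A$, whence a short computation yields
\begin{equation*}
\frac{1}{2}\,\frac{d^2}{dt^2}\left(\tilde l_2^{\,2}\right)=1+\frac{m^2\left(\tilde l_2\coth\tilde l_2-1\right)}{\sinh^2\tilde l_2}.
\end{equation*}
The map $x\mapsto(x\coth x-1)/\sinh^2 x$ is decreasing on $(0,\infty)$ (this reduces to the elementary inequality $3\sinh y\le 2y+y\cosh y$), so the right-hand side is maximized where $\tilde l_2(t)$ is minimal over the hinge parameter; at that minimizer one has $\sinh\tilde l_2\ge m$ and $\tilde l_2\le l_3$ (it is $\sinh^{-1}(\sinh l_3\sin A)$ when $\cos A>0$, and $l_3$ itself when $\cos A\le 0$). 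Hence the displayed quantity is at most $1+(\tilde l_2\coth\tilde l_2-1)=\tilde l_2\coth\tilde l_2\le l_3\coth l_3\le D_{\mathcal{W}}\coth D_{\mathcal{W}}=\zeta$, which is exactly $F''\ge 0$. Note that the crude estimate $m^2\le\sinh^2\tilde l_2$ alone would only give the pointwise bound $\tilde l_2\coth\tilde l_2$, which can exceed $\zeta$ since $\tilde l_2$ may exceed $D_{\mathcal{W}}$ (e.g.\ $\tilde l_2\to l_1+l_3$ as $A\to\pi$); the minimization-plus-monotonicity detour is genuinely needed, so your phrase ``the same monotonicity propagates'' hides a real, if elementary, argument. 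With that supplied your sketch is a complete and correct proof; your stated fallback of simply citing the Alexandrov-type comparison in the references is, of course, precisely what the paper itself does.
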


\begin{lemma}[{\cite[Lemma~4]{HMJG22}}] \label{lem:App3}
Suppose $f_i(x)=\sum_{j=1}^{N_i}\frac{1}{N_i}f_{i,j}$ is geodesically $L_f$-Lipschitz continuous for any $j\in[N_i]$ ($\forall\;i\in[N]$). Consider a batch $\mathcal{B}$ of size $b$ and $x\in \mathcal{W}\subseteq \mathcal{M}$. Let $\eta=\frac{1}{b}\sum_{j\in \mathcal{B}}\mathrm{grad}f_{i,j}(x)+\varepsilon$ where $\varepsilon \sim \mathcal{N}_x(0,\sigma^2)$. Then it holds that $\mathbb{E}[\eta]=\mathrm{grad}f_i(x)$ and $\mathbb{E}[\|\eta\|^2]\le L_f^2 + d\sigma^2$, where the expectation is over randomness in both $\mathcal{B}$ and $\varepsilon$. 
\end{lemma}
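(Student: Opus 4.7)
The plan is to establish the two claims separately, each by splitting the randomness into the batch $\mathcal{B}$ and the tangent-space noise $\varepsilon$, which are independent. First, for the unbiasedness claim $\mathbb{E}[\eta]=\mathrm{grad}f_i(x)$, I would use linearity of expectation and independence to write $\mathbb{E}[\eta]=\mathbb{E}_{\mathcal{B}}[\tfrac{1}{b}\sum_{j\in\mathcal{B}}\mathrm{grad}f_{i,j}(x)] + \mathbb{E}_{\varepsilon}[\varepsilon]$. The second term is $0$ by Definition~\ref{def:tangent space Gaussian distribution} with $\mu=0$. For the first term, since $\mathcal{B}$ is sampled uniformly without replacement from $[N_i]$ of size $b$, each index $j$ appears in $\mathcal{B}$ with probability $b/N_i$, so the expectation reduces to $\frac{1}{b}\cdot\frac{b}{N_i}\sum_{j=1}^{N_i}\mathrm{grad}f_{i,j}(x)=\mathrm{grad}f_i(x)$ by the definition $f_i=\tfrac{1}{N_i}\sum_j f_{i,j}$ and linearity of the Riemannian gradient.

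Second, for the second moment bound, I would expand
\begin{align*}
\|\eta\|_x^2 \;=\; \Bigl\|\tfrac{1}{b}\sum_{j\in\mathcal{B}}\mathrm{grad}f_{i,j}(x)\Bigr\|_x^2 + 2\Bigl\langle \tfrac{1}{b}\sum_{j\in\mathcal{B}}\mathrm{grad}f_{i,j}(x),\,\varepsilon\Bigr\rangle_x + \|\varepsilon\|_x^2
\end{align*}
and take expectation. The cross term vanishes: conditioning on $\mathcal{B}$, the first factor is deterministic and $\mathbb{E}[\varepsilon]=0$ with $\varepsilon\perp\mathcal{B}$. For the first term, I would invoke convexity of $\|\cdot\|_x^2$ (Jensen's inequality applied to the finite average over $\mathcal{B}$) to get $\|\tfrac{1}{b}\sum_{j\in\mathcal{B}}\mathrm{grad}f_{i,j}(x)\|_x^2 \le \tfrac{1}{b}\sum_{j\in\mathcal{B}}\|\mathrm{grad}f_{i,j}(x)\|_x^2$, and then apply the geodesic $L_f$-Lipschitz hypothesis in the equivalent gradient-norm form $\|\mathrm{grad}f_{i,j}(x)\|_x\le L_f$ recalled right after Definition~\ref{def:geodLip}; this bounds the first term by $L_f^2$ almost surely. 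For the third term, $\mathbb{E}[\|\varepsilon\|_x^2]=d\sigma^2$ follows from the discussion after Definition~\ref{def:tangent space Gaussian distribution}, where vectorizing $\varepsilon$ in an orthonormal basis $\mathscr{B}_x$ identifies its distribution with the standard Gaussian $\mathcal{N}(0,\sigma^2 I_d)$ on $\mathbb{R}^d$. Summing the three contributions yields $\mathbb{E}[\|\eta\|_x^2]\le L_f^2 + d\sigma^2$.

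Neither step poses a real obstacle because all ingredients are already recorded in the preliminaries; the only mildly delicate point is verifying that the cross term vanishes, which needs the explicit independence of $\varepsilon$ from $\mathcal{B}$ (stated in the sampling scheme of Algorithm~\ref{alg:DP-RSGD}) and the linearity of the Riemannian inner product in the tangent space $\mathrm{T}_x\mathcal{M}$. Also worth noting: the bound uses sampling without replacement only through the inequality version of Jensen; no variance-reduction term arises, so the estimate is independent of $b$, which matches the statement.
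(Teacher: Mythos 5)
Your proof is correct and is essentially the standard argument: the paper itself does not prove Lemma~\ref{lem:App3} but defers to \cite[Lemma~4]{HMJG22}, whose proof proceeds exactly as you do --- unbiasedness via the inclusion probability $b/N_i$ and $\mathbb{E}[\varepsilon]=0$, then expanding $\|\eta\|_x^2$, killing the cross term by independence, bounding the batch average by $L_f^2$ via the gradient-norm form of geodesic Lipschitzness, and using $\mathbb{E}[\|\varepsilon\|_x^2]=d\sigma^2$ from the orthonormal-coordinate identification after Definition~\ref{def:tangent space Gaussian distribution}. Your only stylistic quirk is attributing the almost-sure bound to the without-replacement scheme: Jensen's inequality for the squared norm holds pointwise for any realized batch, and the sampling scheme matters only for the unbiasedness claim.
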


\begin{lemma}\label{lem:1}
	Suppose that Problem~\eqref{prob} satisfies Assumption~\ref{ass:geodes-Lip},~\ref{ass:smooth},  and~\ref{ass:regularization}. Consider Algorithm~\ref{alg:PriRFed} with Algorithm~\ref{alg:DP-RSGD} and \textbf{Option 2}. Set $s_t=1$ and $K>1$. For $c_k$, $c_{k+1}$, $\beta$, $\alpha>0$, define
	$
		c_k = c_{k+1}(1+\alpha\beta) \hbox{ with $c_K = 1$ and } \delta_k = \alpha - \frac{c_{k+1}\alpha}{\beta},
	$
 where $\beta > 1$ is a constant.
	Then the iterate $x_{i,k}^{(t)}$ satisfies
	\begin{align}	\label{lem:1:0}
		\mathbb{E}[\|\mathrm{grad}f(x_{i,k}^{(t)})\|^2] \le \frac{R_k^{t}-R_{k+1}^{t}}{\delta_k} +  \frac{\alpha\beta(L_g+2c_{k+1}\zeta)}{2(\beta-c_{k+1})} (L_f^2 + d\sigma^2),
	\end{align}
	where the expectation is taken over $\mathcal{S}_t$, $\mathcal{B}_k$ and $\varepsilon_{i,k}^{(t)}$, $ R_k^{t} := \mathbb{E}[f(x_{i,k}^{(t)}) + c_k\|\mathrm{Exp}_{x^{(t)}}^{-1}(x_{i,k}^{(t)})\|^2] $ for $1\le t\le T$ and $0\le k\le K$, and $\sigma=\max_{i=1,2,\dots,N}\sigma_i$. 
\end{lemma}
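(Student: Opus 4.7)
The plan is to construct the standard SGD-on-manifolds Lyapunov argument in the spirit of Zhang--Sra~\cite{ZS16} and Han et al.~\cite{HMJG22}, adapted to the federated inner loop that drifts away from the outer iterate $x^{(t)}$. The Lyapunov quantity $R_k^t$ already encodes this drift via the additional $c_k\|\mathrm{Exp}_{x^{(t)}}^{-1}(x_{i,k}^{(t)})\|^2$ term, and the whole argument boils down to showing that one inner step decreases $R_k^t$ by at least $\delta_k\mathbb{E}[\|\mathrm{grad}f(x_{i,k}^{(t)})\|^2]$ up to a noise/clipping residual.

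First, I would apply the $L_g$-smoothness bound (Definition~\ref{def:geodsmooth}) along the inner step $x_{i,k}^{(t)} \to x_{i,k+1}^{(t)} = \mathrm{Exp}_{x_{i,k}^{(t)}}(-\alpha\eta_k^{(t)})$ to obtain
\[
f(x_{i,k+1}^{(t)}) \le f(x_{i,k}^{(t)}) - \alpha\langle\mathrm{grad}f(x_{i,k}^{(t)}),\eta_k^{(t)}\rangle + \tfrac{L_g\alpha^2}{2}\|\eta_k^{(t)}\|^2.
\]
Then, taking conditional expectation over $\mathcal{S}_t$, $\mathcal{B}_k$, and $\varepsilon_{i,k}^{(t)}$, Lemma~\ref{lem:App3} gives $\mathbb{E}[\eta_k^{(t)}] = \mathrm{grad}f(x_{i,k}^{(t)})$ (using uniform agent sampling with $s_t=1$ and the fact that $\tau \ge L_f$ so clipping is inactive) and $\mathbb{E}[\|\eta_k^{(t)}\|^2]\le L_f^2 + d\sigma^2$, producing the decrement on $\mathbb{E}[f(x_{i,k}^{(t)})]$.

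Next, to handle the distance term in $R_k^t$, I would invoke Lemma~\ref{lem:App2} with the geodesic triangle $(x_{i,k}^{(t)}, x_{i,k+1}^{(t)}, x^{(t)})$, giving
\[
\|\mathrm{Exp}_{x^{(t)}}^{-1}(x_{i,k+1}^{(t)})\|^2 \le \zeta\alpha^2\|\eta_k^{(t)}\|^2 + \|\mathrm{Exp}_{x^{(t)}}^{-1}(x_{i,k}^{(t)})\|^2 + 2\alpha\langle\eta_k^{(t)},\mathrm{Exp}_{x_{i,k}^{(t)}}^{-1}(x^{(t)})\rangle.
\]
After taking expectation (which turns the last inner product into $\langle\mathrm{grad}f(x_{i,k}^{(t)}),\mathrm{Exp}_{x_{i,k}^{(t)}}^{-1}(x^{(t)})\rangle$), I would apply Young's inequality $2\alpha\langle a,b\rangle \le \frac{\alpha}{\beta}\|a\|^2 + \alpha\beta\|b\|^2$ to split this cross term into a $\|\mathrm{grad}f\|^2$ contribution and an $\|\mathrm{Exp}^{-1}\|^2$ contribution.

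Finally, I would multiply this second estimate by $c_{k+1}$ and add it to the smoothness estimate, forming $R_{k+1}^t - R_k^t$. The coefficient of $\|\mathrm{Exp}_{x^{(t)}}^{-1}(x_{i,k}^{(t)})\|^2$ becomes $c_{k+1}(1+\alpha\beta) - c_k$, which vanishes by the recursion $c_k = c_{k+1}(1+\alpha\beta)$; the coefficient of $\mathbb{E}[\|\mathrm{grad}f(x_{i,k}^{(t)})\|^2]$ becomes $-(\alpha - c_{k+1}\alpha/\beta) = -\delta_k$; and the remaining noise coefficient collects to $\alpha^2(\tfrac{L_g}{2} + c_{k+1}\zeta)(L_f^2 + d\sigma^2)$. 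Dividing by $\delta_k$ and simplifying $\alpha^2/\delta_k = \alpha\beta/(\beta - c_{k+1})$ yields exactly~\eqref{lem:1:0}.

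The main obstacle is the careful bookkeeping in the combination step: one has to orchestrate the curvature correction $\zeta$ from Lemma~\ref{lem:App2}, the Young's parameter $\beta$, and the recursion defining $c_k$ so that the $\|\mathrm{Exp}^{-1}\|^2$ terms telescope cleanly while the noise coefficient retains the exact form $\frac{\alpha\beta(L_g+2c_{k+1}\zeta)}{2(\beta-c_{k+1})}$. A secondary subtlety is justifying $\mathbb{E}[\eta_k^{(t)}] = \mathrm{grad}f(x_{i,k}^{(t)})$ when averaging over both the agent subsampling $\mathcal{S}_t$ and the mini-batch $\mathcal{B}_k$; this relies on the subsampling distribution matching the weights $p_i$ defining $f$, which is consistent with the convention used throughout Section~\ref{sec:conv:DP-RSGD}.
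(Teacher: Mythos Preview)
Your proposal is correct and follows essentially the same route as the paper's proof: both apply $L_g$-smoothness for the function decrement, invoke Lemma~\ref{lem:App2} for the curvature-corrected triangle inequality on $\|\mathrm{Exp}_{x^{(t)}}^{-1}(x_{i,k+1}^{(t)})\|^2$, split the cross term via Young's inequality with parameter $\beta$, bound $\mathbb{E}[\|\eta_k^{(t)}\|^2]$ by Lemma~\ref{lem:App3}, and then combine so that the recursion $c_k=c_{k+1}(1+\alpha\beta)$ forces the distance terms to telescope. Your anticipated obstacles (the bookkeeping to match the noise coefficient and the justification of $\mathbb{E}[\eta_k^{(t)}]=\mathrm{grad}f(x_{i,k}^{(t)})$ under agent subsampling) are exactly the two places where the paper's argument is most implicit.
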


\begin{proof}
The spirit of the proofs follows from~\cite[Lemma~2]{ZRS17}. 
	With $f$ being $L_g$-smooth we have
	\begin{equation}		
	\begin{aligned} \label{lem:1:1}
		\mathbb{E}[f(x_{i,k+1}^{(t)})] & \le \mathbb{E}\left[ f(x_{i,k}^{(t)})+\left< \mathrm{grad}f(x_{i,k}^{(t)}),\mathrm{Exp}_{x_{i,k}^{(t)}}^{-1}(x_{i,k+1}^{(t)}) \right> + \frac{{L}_g}{2}\| \mathrm{Exp}_{x_{i,k}^{(t)}}^{-1}(x_{i,k+1}^{(t)}) \|^2 \right] \\
		& \le \mathbb{E}\left[ f(x_{i,k}^{(t)}) - \alpha \| \mathrm{grad}f(x_{i,k}^{(t)}) \|^2 + \frac{{L}_g\alpha^2}{2}\|\eta_k^{(t)}\|^2 \right],
	\end{aligned}
	\end{equation}
since $\mathbb{E}[\mathrm{Exp}_{x_{i,k}^{(t)}}^{-1}(x_{i,k+1}^{(t)})]=\alpha \mathbb{E}[\eta_k^{(t)}]= \alpha \mathrm{grad}f_i(x_{i,k}^{(t)})$.
	Consider the following
	\begin{equation}		 \label{lem:1:2}
	\begin{aligned}
		& \mathbb{E}[\|\mathrm{Exp}_{x^{(t)}}^{-1}(x_{i,k+1}^{(t)})\|^2]
		\\
		& \le \mathbb{E}\left[ \| \mathrm{Exp}_{x^{(t)}}^{-1}(x_{i,k}^{(t)}) \|^2 + \zeta \| \mathrm{Exp}^{-1}_{x_{i,k}^{(t)}}(x_{i,k+1}^{(t)}) \|^2  - 2 \left< \mathrm{Exp}_{x^{(t)}_{i,k}}^{-1}(x^{(t)}_{i,k+1}),\mathrm{Exp}_{x^{(t)}_{i,k}}^{-1}(x^{(t)})\right> \right] \\
		& = \mathbb{E}\left[ \|\mathrm{Exp}_{x^{(t)}}^{-1}(x_{i,k}^{(t)})\|^2 + \zeta\alpha^2\|\eta_k^{(t)}\|^2 + 2\alpha\left< \mathrm{grad}f(x_{i,k}^{(t)}), \mathrm{Exp}_{x_{i,k}^{(t)}}^{-1}(x^{(t)}) \right> \right] \\
		& \le \mathbb{E}\left[ \| \mathrm{Exp}_{x^{(t)}}^{-1}(x_{i,k}^{(t)}) \|^2 + \zeta \alpha^2 \|\eta_k^{(t)}\|^2 \right] + 2\alpha \mathbb{E} \left[ \frac{1}{2\beta}\| \mathrm{grad}f(x_{i,k}^{(t)}) \|^2 + \frac{\beta}{2} \|\mathrm{Exp}_{x^{(t)}}^{-1}(x_{i,k}^{(t)})\|^2 \right]
	\end{aligned}
	\end{equation}
	where the first inequality follows from Lemma~\ref{lem:App2} and the second inequality follows from $2\left<u,v\right>\le \frac{1}{\beta}\|u\|^2 + \beta\|v\|^2$.
	Substituting~\eqref{lem:1:1} and~\eqref{lem:1:2} into $R_{k+1}^t$ yields
	\begin{equation}	\label{lem:1:3}
	\begin{aligned}
		R_{k+1}^t &\le  \mathbb{E}\left[ f(x_{i,k}^{(t)}) - \alpha \|\mathrm{grad}f(x_{i,k}^{(t)})\|^2 + \frac{{L}_g\alpha^2}{2}\|\eta_k^{(t)}\|^2 \right] + c_{k+1}\mathbb{E}\left[ \|\mathrm{Exp}_{x^{(t)}}^{-1}(x_{i,k}^{(t)})\|^2 + \zeta\alpha^2\|\eta_k^{(t)}\|^2 \right] \\
		& \quad + 2c_{k+1} \alpha \mathbb{E} \left[ \frac{1}{2\beta}\|\mathrm{grad}f(x_{i,k}^{(t)})\|^2 + \frac{\beta}{2} \| \mathrm{Exp}_{x^{(t)}}^{-1}(x_{i,k}^{(t)}) \|^2 \right] \\
		& = \mathbb{E}\left[ f(x_{i,k}^{(t)}) - (\alpha - \frac{c_{k+1}\alpha}{\beta})\|\mathrm{grad}f(x_{i,k}^{(t)})\|^2 \right] + \left( \frac{{L}_g\alpha^2}{2}+c_{k+1}\zeta\alpha^2 \right)\mathbb{E}[\|\eta_k^{(t)}\|^2] \\
		& \quad + (c_{k+1} + c_{k+1}\alpha\beta)\mathbb{E}[\|\mathrm{Exp}_{x^{(t)}}^{-1}(x_{i,k}^{(t)})\|^2].
	\end{aligned}
	\end{equation}
	By Lemma~\ref{lem:App3}, we have that 
	\begin{align} \label{lem:1:6}
		\mathbb{E}[\|\eta_k^{(t)}\|^2] \le {L}_f^2 + d\sigma_i^2  \le {L}_f^2 + d\sigma^2
	\end{align}
	with $\sigma=\max_{i\in[N]}{\sigma_i}$.
	Plugging~\eqref{lem:1:6} into~\eqref{lem:1:3} we have
	\begin{equation}
		\begin{aligned}
			R_{k+1}^t &\le \mathbb{E}\left[ f(x_{i,k}^{(t)})  - (\alpha - \frac{c_{k+1}\alpha}{\beta})\|\mathrm{grad}f(x_{i,k}^{(t)})\|^2\right] + c_{k+1}(1+\alpha\beta)\mathbb{E}[\|\mathrm{Exp}_{x^{(t)}}^{-1}(x_{i,k}^{(t)})\|^2] \\
			& \quad + \left( \frac{{L}_g\alpha^2}{2} + c_{k+1}\zeta\alpha^2 \right) ({L}_f^2 + d\sigma^2 ) \\
			& = R_k^t - \left( \alpha - \frac{c_{k+1}\alpha}{\beta} \right)\mathbb{E}[\|\mathrm{grad}f(x_{i,k}^{(t)})\|^2] + \left( \frac{{L}_g\alpha^2}{2} + c_{k+1}\zeta\alpha^2 \right) ({L}_f^2 + d\sigma^2),
		\end{aligned}
	\end{equation}
 which yields the desired result.
\end{proof}

Now, we are ready to prove Theorem~\ref{th:conv:for DP-RSGD2}.

\begin{proof}[Proof of Theorem~\ref{th:conv:for DP-RSGD2}]
	By Lemma~\ref{lem:1}, we have $c_k=c_{k+1}(1+\alpha\beta)=(1+\alpha\beta)^{K-k}$ due to $c_K=1$. Let $q={L}_f^2+d\sigma^2$, $q_k=\frac{\alpha\beta({L}_g+2c_{k+1}\zeta)}{2(\beta-c_{k+1})}$, and $\delta_{\min}=\min_{k=0,1,\dots,K-1}\delta_k=\alpha(1-\frac{c_1}{\beta})=\alpha(1-\frac{(1+\alpha\beta)^{K-1}}{\beta})$ since $c_k$ is decreasing. 
	Requiring $\delta_{\min}>0$ implies that $\beta>c_1=(1+\alpha\beta)^{K-1}$, i.e., $\alpha<(\beta^{\frac{1}{K-1}}-1)/\beta$ with $\beta>1$. Summing~\eqref{lem:1:0} over $k=0,1,\dots,K-1$ gives
	\begin{align} \label{th:conv:for DP-RSGD2:1}
	\frac{1}{K}\sum_{k=0}^{K-1}\mathbb{E}[\|\mathrm{grad}f(x_{i,k}^{(t)})\|^2] \le \frac{R_0^t - R_K^t}{K\delta_{\min}} + \frac{1}{K}\sum_{k=0}^{K-1}q_kq\le \mathbb{E}\left[ \frac{f(x^{(t)})-f(x_{i,K}^{(t)})}{K\delta_{\min}} \right] + q_0q
	\end{align}
	since $R_0^t=\mathbb{E}[f(x^{(t)})]$, $R_K^{t}=\mathbb{E}[f(x_{i,K}^{(t)})+c_K\|\mathrm{Exp}_{x^{(t)}}^{-1}(x_{i,K}^{(t)})\|^2]\ge \mathbb{E}[f(x_{i,K}^{t})]$, and $q_0\ge q_k$.
	Summing~\eqref{th:conv:for DP-RSGD2:1} over $t=0,1,\dots,T-1$ yields
	$
		\frac{1}{T}\sum_{t=0}^{T-1}\frac{1}{K}\sum_{k=0}^{K-1}\mathbb{E}[\| \mathrm{grad}f(x_{i,k}^{(t)}) \|^2]\le \frac{f(x^{(0)})-f(x^*)}{KT\delta_{\min}} + q_0q.
	$ The desired result follows from the \textbf{Option 2} of the output of Algorihtm~\ref{alg:PriRFed} and~\ref{alg:DP-RSGD}.
 
\end{proof}

\section{Proofs for Section~\ref{sec:conv:DP-RSVRG}} \label{appendix2}

\subsection{Proof of Theorem~\ref{th:conv:for DP-RSVRG}} \label{app:lem2}

\begin{lemma} \label{lem:2}
	Suppose Problem~\eqref{prob} satisfies Assumption~\ref{ass:geodes-Lip},~\ref{ass:smooth} and~\ref{ass:regularization}. Consider Algorithm~\ref{alg:PriRFed} with Algorithm~\ref{alg:DP-RSVRG} and \textbf{Option 2}. Set $K>1$, $m>1$, and $s_t=1$ for $t=0,1,\cdots,T-1$. For $c_j,c_{j+1},\beta,\alpha>0$, define
$
	c_j = c_{j+1}(1+\beta\alpha+2\zeta {L}_g^2\alpha^2) + {L}_g^3\alpha^2 \hbox{ and }\delta_j = \alpha - \frac{c_{j+1}\alpha}{\beta} - {L}_g\alpha^2 - 2c_{j+1}\zeta\alpha^2>0.
$
	Then the iterate $x_{k+1,j}^{(t)}$ satisfies
	\begin{align} \label{lem:2:0}
		\mathbb{E}\left[\|\mathrm{grad}f(x_{k+1,j}^{(t)})\|^2\right] \le \frac{R_{k+1,j}^{(t)}-R_{k+1,j+1}^{(t)}}{\delta_j} + \frac{\frac{1}{2}d{L}_g\alpha^2+c_{j+1}\zeta d\alpha^2}{\delta_j}\sigma^2,
	\end{align}
	where the expectation is taken over $S_t$, $l_j$ and $\varepsilon_{k+1,j}^{(t)}$, $R_{k+1,j}^{(t)}=\mathbb{E}[f(x_{k+1,j}^{(t)})+c_j\|\mathrm{Exp}_{\tilde{x}^{(t)}_{i,k}}^{-1}(x_{k+1,j}^{(t)})\|^2]$ for $k=0,1,\dots,K-1$ and $j=0,1,\dots,m$, and $\sigma=\max_{i=1,2,\dots,N}\sigma_i$.
\end{lemma}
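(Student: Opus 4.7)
The plan is to mirror the proof of Lemma~\ref{lem:1} (the DP-RSGD counterpart), with the minibatch stochastic gradient replaced by the variance-reduced noisy gradient
\[
\eta_{k+1,j}^{(t)} = \mathrm{clip}_{\tau_1}(\mathrm{grad}f_{i,l_j}(x_{k+1,j}^{(t)})) - \Gamma_{\tilde{x}_{i,k}^{(t)}}^{x_{k+1,j}^{(t)}}\bigl(\mathrm{clip}_{\tau_1}(\mathrm{grad}f_{i,l_j}(\tilde{x}_{i,k}^{(t)})) - g_{k+1}^{(t)}\bigr) + \varepsilon_{k+1,j}^{(t)}.
\]
First I would apply geodesic $L_g$-smoothness of $f$ along the geodesic from $x_{k+1,j}^{(t)}$ to $x_{k+1,j+1}^{(t)} = \mathrm{Exp}_{x_{k+1,j}^{(t)}}(-\alpha\eta_{k+1,j}^{(t)})$, then take expectation over $l_j$ and the tangent Gaussian $\varepsilon_{k+1,j}^{(t)}$. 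Treating clipping as inactive (the standard convention in~\cite{HMJG22,UHJM23} when $\tau_0,\tau_1$ are chosen consistent with Assumption~\ref{ass:geodes-Lip}), the SVRG correction combined with parallel translation isometry yields the unbiasedness $\mathbb{E}[\eta_{k+1,j}^{(t)}] = \mathrm{grad}f_i(x_{k+1,j}^{(t)})$, producing the leading descent term $-\alpha\|\mathrm{grad}f(x_{k+1,j}^{(t)})\|^2$.

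The distance piece of $R_{k+1,j+1}^{(t)}$ is handled exactly as in Lemma~\ref{lem:1}: Lemma~\ref{lem:App2} with vertices $\tilde{x}_{i,k}^{(t)}, x_{k+1,j}^{(t)}, x_{k+1,j+1}^{(t)}$ gives
\[
\mathbb{E}[\|\mathrm{Exp}_{\tilde{x}_{i,k}^{(t)}}^{-1}(x_{k+1,j+1}^{(t)})\|^2] \le \mathbb{E}[\|\mathrm{Exp}_{\tilde{x}_{i,k}^{(t)}}^{-1}(x_{k+1,j}^{(t)})\|^2] + \zeta\alpha^2\mathbb{E}[\|\eta_{k+1,j}^{(t)}\|^2] + 2\alpha\mathbb{E}\bigl[\bigl\langle \mathrm{grad}f(x_{k+1,j}^{(t)}), \mathrm{Exp}_{x_{k+1,j}^{(t)}}^{-1}(\tilde{x}_{i,k}^{(t)})\bigr\rangle\bigr],
\]
and the inner product is split via $2\langle u,v\rangle \le \tfrac{1}{\beta}\|u\|^2 + \beta\|v\|^2$. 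Combining the smoothness and distance inequalities weighted by $1$ and $c_{j+1}$ respectively produces coefficients $-(\alpha - c_{j+1}\alpha/\beta)$ on $\|\mathrm{grad}f(x_{k+1,j}^{(t)})\|^2$, $c_{j+1}(1+\beta\alpha)$ on $\|\mathrm{Exp}_{\tilde{x}_{i,k}^{(t)}}^{-1}(x_{k+1,j}^{(t)})\|^2$, and $(\tfrac{L_g}{2}\alpha^2 + c_{j+1}\zeta\alpha^2)\mathbb{E}[\|\eta_{k+1,j}^{(t)}\|^2]$.

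The crux is the second-moment bound on $\eta_{k+1,j}^{(t)}$. Following the Riemannian SVRG analysis of~\cite[Lemma~3]{ZRS17} and using geodesic $L_g$-smoothness of each $f_{i,l_j}$ together with isometry of $\Gamma$ and independence of the noise, I expect
\[
\mathbb{E}[\|\eta_{k+1,j}^{(t)}\|^2] \le \|\mathrm{grad}f(x_{k+1,j}^{(t)})\|^2 + 2L_g^2\|\mathrm{Exp}_{\tilde{x}_{i,k}^{(t)}}^{-1}(x_{k+1,j}^{(t)})\|^2 + d\sigma^2,
\]
where the $d\sigma^2$ captures the tangent Gaussian. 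Plugging this in converts part of the $\eta$-term into additional weight on $\|\mathrm{grad}f\|^2$ and on the distance square, producing exactly $(L_g\alpha^2 + 2c_{j+1}\zeta\alpha^2)\|\mathrm{grad}f(x_{k+1,j}^{(t)})\|^2$ to be subtracted from the descent coefficient and $(L_g^3\alpha^2 + 2c_{j+1}\zeta L_g^2\alpha^2)\|\mathrm{Exp}_{\tilde{x}_{i,k}^{(t)}}^{-1}(x_{k+1,j}^{(t)})\|^2$ to be absorbed into the recursion. Matching gives precisely $c_j = c_{j+1}(1+\beta\alpha + 2\zeta L_g^2\alpha^2) + L_g^3\alpha^2$ and $\delta_j = \alpha - c_{j+1}\alpha/\beta - L_g\alpha^2 - 2c_{j+1}\zeta\alpha^2$, with residual noise term $(\tfrac{1}{2}dL_g\alpha^2 + c_{j+1}\zeta d\alpha^2)\sigma^2$. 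Rearranging yields~\eqref{lem:2:0}.

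The hard part will be the SVRG variance bound on a Riemannian manifold in the presence of clipping and the Gaussian mechanism: the parallel translation couples tangent spaces at $x_{k+1,j}^{(t)}$ and $\tilde{x}_{i,k}^{(t)}$, so invoking $L_g$-smoothness of the individual $f_{i,l_j}$ along the minimizing geodesic has to be combined with the isometry $\langle \Gamma u, \Gamma v\rangle = \langle u,v\rangle$ to convert the squared difference into $L_g^2\mathrm{dist}^2(x_{k+1,j}^{(t)}, \tilde{x}_{i,k}^{(t)})$, which is what ultimately supplies the $\zeta L_g^2\alpha^2$ and $L_g^3\alpha^2$ terms in the $c_j$ recursion. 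The noise contribution is orthogonal to these arguments and simply adds $d\sigma^2$ after taking expectation. Once this bound is in hand, the remaining manipulations are algebraic and parallel Lemma~\ref{lem:1} essentially verbatim.
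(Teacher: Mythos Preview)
Your plan matches the paper's proof essentially step for step: geodesic $L_g$-smoothness for the descent inequality, Lemma~\ref{lem:App2} for the distance recursion, Young's inequality with parameter $\beta$, and then the SVRG second-moment bound plugged into the combined estimate. The paper carries this out exactly as you describe.

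One coefficient slip to fix: the variance bound you wrote,
\[
\mathbb{E}[\|\eta_{k+1,j}^{(t)}\|^2] \le \|\mathrm{grad}f(x_{k+1,j}^{(t)})\|^2 + 2L_g^2\|\mathrm{Exp}_{\tilde{x}_{i,k}^{(t)}}^{-1}(x_{k+1,j}^{(t)})\|^2 + d\sigma^2,
\]
will not reproduce the stated $\delta_j$. With coefficient $1$ on $\|\mathrm{grad}f\|^2$ you would get only $\tfrac{L_g}{2}\alpha^2 + c_{j+1}\zeta\alpha^2$ subtracted from the descent coefficient, not the $L_g\alpha^2 + 2c_{j+1}\zeta\alpha^2$ you then claim. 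The paper obtains the needed factor by writing $\eta = (\Delta - \mathbb{E}[\Delta]) + \mathrm{grad}f(x_{k+1,j}^{(t)}) + \varepsilon$ and applying the cruder $\|a+b\|^2\le 2\|a\|^2+2\|b\|^2$ to the first two terms, giving
\[
\mathbb{E}[\|\eta_{k+1,j}^{(t)}\|^2] \le 2\mathbb{E}[\|\mathrm{grad}f(x_{k+1,j}^{(t)})\|^2] + 2L_g^2\mathbb{E}[\|\mathrm{Exp}_{\tilde{x}_{i,k}^{(t)}}^{-1}(x_{k+1,j}^{(t)})\|^2] + d\sigma^2.
\]
With this factor $2$ on both terms, the algebra you outlined does close to exactly $c_j = c_{j+1}(1+\beta\alpha+2\zeta L_g^2\alpha^2)+L_g^3\alpha^2$ and $\delta_j = \alpha - c_{j+1}\alpha/\beta - L_g\alpha^2 - 2c_{j+1}\zeta\alpha^2$.
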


\begin{proof}[Proof of Lemma~\ref{lem:2}]
	Let $\Delta^{(t)}_{k+1,j}=\mathrm{grad}f_{i,l_j}(x_{k+1,j}^{(t)}) - \Gamma_{\tilde{x}_{i,k}^{(t)}}^{x_{k+1,j}^{(t)}}(\mathrm{grad}f_{i,l_j}(\tilde{x}_{i,k}^{(t)}))$. Then we have
$
\mathbb{E}[\Delta^{(t)}_{k+1,j}]=\mathrm{grad}f(x_{k+1,j}^{(t)})-\Gamma_{\tilde{x}_{i,k}^{(t)}}^{x_{k+1,j}^{(t)}}(\mathrm{grad}f(\tilde{x}_{i,k}^{(t)})),
$
and thus $\eta_{k+1,j}^{(t)}=\Delta_{k+1,j}^{(t)}-\mathbb{E}[\Delta_{k+1,j}^{(t)}]+\mathrm{grad}f(x_{k+1,j}^{(t)})+\varepsilon_{k+1,j}^{(t)}$.
	It follows that
	\begin{equation}
	\begin{aligned} \label{lem:2:1}
\mathbb{E}\left[\|\eta_{k+1,j}^{(t)}\|^2\right]
& = \mathbb{E}\left[ \|\Delta_{k+1,j}^{(t)}-\mathbb{E}[\Delta_{k+1,j}^{(t)}]+\mathrm{grad}f(x_{k+1,j}^{(t)})+\varepsilon_{k+1,j}^{(t)} \|^2\right]  \\
& = \mathbb{E}\left[ \|\Delta_{k+1,j}^{(t)}-\mathbb{E}[\Delta_{k+1,j}^{(t)}]+\mathrm{grad}f(x_{k+1,j}^{(t)}) \|^2\right] + \mathbb{E}[\|\varepsilon_{k+1,j}^{(t)}\|^2] \\ 
& \quad + 2\mathbb{E}\left<\Delta_{k+1,j}^{(t)}-\mathbb{E}[\Delta_{k+1,j}^{(t)}]+\mathrm{grad}f(x_{k+1,j}^{(t)}), \varepsilon_{k+1,t}^{(t)}\right> \\
&\le \mathbb{E}\left[ \|\Delta_{k+1,j}^{(t)}-\mathbb{E}[\Delta_{k+1,j}^{(t)}]+\mathrm{grad}f(x_{k+1,j}^{(t)}) \|^2\right] + d\sigma^2   \\
		&\le 2\mathbb{E}[\|\Delta_{k+1,j}^{(t)}-\mathbb{E}[\Delta_{k+1,j}^{(t)}]\|^2] + 2\mathbb{E}[\|\mathrm{grad}f(x_{k+1,j}^{(t)})\|^2] + d\sigma^2 \\
		& \le 2\mathbb{E}[\|\Delta_{k+1,j}^{(t)}\|^2] + 2\mathbb{E}[\|\mathrm{grad}f(x_{k+1,j}^{(t)})\|^2] + d\sigma^2\\
		& \le 2L_g^2\mathbb{E}\left[ \| \mathrm{Exp}_{\tilde{x}_{i,k}^{(t)}}^{-1}(x_{k+1,j}^{(t)}) \|^2 \right] + 2\mathbb{E}[\|\mathrm{grad}f(x_{k+1,j}^{(t)})\|^2] + d\sigma^2,
	\end{aligned}
	\end{equation}
where the first inequality is due to $\mathbb{E}[\varepsilon_{k+1,j}^{(t)}]=0$ and $\mathbb{E}[\|\varepsilon_{k+1,j}^{(t)}\|^2]\le d\sigma^2$ with $\sigma=\max_{i=1,2,\dots,N}{\sigma_i}$, the second due to $\|u+v\|^2\le 2\|u\|^2+2\|v\|^2$, the third due to $\mathbb{E}[\|\xi-\mathbb{E}[\xi]\|^2]\le \mathbb{E}[\|\xi\|^2]-\|\mathbb{E}[\xi]\|^2\le \mathbb{E}[\|\xi\|^2]$, and the last due to ${L}_g$-smooth for $f_{i,j}$. 
Due to $L_g$-smoothness of $f$, we have 
\begin{equation}	
\begin{aligned} \label{lem:2:2} 
	\mathbb{E}[f(x_{k+1,j+1}^{(t)})] & \le \mathbb{E}[f(x_{k+1,j}^{(t)}) + \left<\mathrm{grad}f(x_{k+1,j}^{(t)}),\mathrm{Exp}_{x_{k+1,j}^{(t)}}^{-1}(x_{k+1,j+1}^{(t)})\right>+\frac{L_g}{2}\|\mathrm{Exp}_{x_{k+1,j}^{-1}}(x_{k+1,j+1}^{(t)})\|^2] \\ 
	& \le \mathbb{E}[f(x_{k+1,j}^{(t)})-\alpha \|\mathrm{grad}f(x_{k+1,j}^{(t)})\|^2 + \frac{L_g\alpha^2}{2}\|\eta_{k+1,j}^{(t)}\|^2]. 
\end{aligned}
\end{equation}
We also have
\begin{equation} \label{lem:2:3}
	\begin{aligned}
		&\mathbb{E}[\|\mathrm{Exp}_{\tilde{x}_{i,k}}^{-1}(x_{k+1,j+1}^{(t)})\|^2]  \le \mathbb{E}[\|\mathrm{Exp}_{\tilde{x}_{i,k}}^{-1}(x_{k+1,j}^{-1})\|^2 + \zeta \|\mathrm{Exp}_{x_{k+1,j}^{(t)}}^{-1}(x_{k+1,j+1}^{(t)})\|^2 \\
		& \quad  - 2 \inner{\mathrm{Exp}_{x_{k+1,j}^{(t)}}^{-1}(x_{k+1,j+1}^{(t)})}{\mathrm{Exp}_{x_{k+1,j}^{(t)}}^{-1}(\tilde{x}_{i,k})}] \\ 
		=& \mathbb{E}[\|\mathrm{Exp}_{\tilde{x}_{i,k}}^{-1}(x_{k+1,j}^{-1})\|^2 + \zeta\alpha^2\|\eta_{k+1,j}^{(t)}\|^2 + 2\alpha \inner{\mathrm{grad}f(x_{k+1,j}^{(t)})}{ \mathrm{Exp}_{x_{k+1,j}^{(t)}}^{-1}(\tilde{x}_{i,k})}] \\ 
		\le& \mathbb{E}[\|\mathrm{Exp}_{\tilde{x}_{i,k}}^{-1}(x_{k+1,j}^{-1})\|^2 + \zeta\alpha^2\|\eta_{k+1,j}^{(t)}\|^2] + 2\alpha \mathbb{E}[\frac{1}{2\beta}\|\mathrm{grad}f(x_{k+1,j}^{(t)})\|^2 + \frac{\beta}{2}\|\mathrm{Exp}_{\tilde{x}_{i,k}}^{-1}(x_{k+1,j}^{(t)})\|^2],  
	\end{aligned}
\end{equation}
where the first inequality is due to Lemma~\ref{lem:App2}, the second due to $2 \left<u,v\right>\le \frac{1}{\beta}\|u\|^2+{\beta}\|v\|^2$. Substituting Inequality~\eqref{lem:2:2} and~\eqref{lem:2:3} into $R_{k+1,j+1}^{(t)}$, we have 
\begin{equation} \label{lem:2:4}
	\begin{aligned}
		R_{k+1,j+1}^{(t)} & \le  \mathbb{E}[f(x_{k+1,j}^{(t)})-\alpha\|\mathrm{grad}f(x_{k+1,j}^{(t)})\|^2+\frac{L_g\alpha^2}{2}\|\eta_{k+1,j}^{(t)}\|^2 ]\\
		 & \quad + c_{j+1}\mathbb{E}[\|\mathrm{Exp}_{\tilde{x}_{i,k}}^{-1}(x_{k+1,j}^{-1})\|^2 + \zeta\alpha^2\|\eta_{k+1,j}^{(t)}\|^2] + 2\alpha \mathbb{E}[\frac{1}{2\beta}\|\mathrm{grad}f(x_{k+1,j}^{(t)})\|^2 + \frac{\beta}{2}\|\mathrm{Exp}_{\tilde{x}_{i,k}}^{-1}(x_{k+1,j}^{(t)})\|^2 \\ 
		 & = \mathbb{E}[ f(x_{k+1,j}^{(t)}) - \alpha(1 - \frac{c_{j+1}}{\beta})\|\mathrm{grad}f(x_{k+1,j}^{(t)})\|^2] + \alpha^2(\frac{L_g}{2}+c_{j+1}\zeta)\mathbb{E}[\|\eta_{k+1,j}^{(t)}\|^2] \\ 
		 & \quad + c_{j+1}(1+\alpha\beta)\mathbb{E}[\|\mathrm{Exp}_{\tilde{x}_{i,k}}^{-1}(x_{k+1,j}^{(t)})\|^2].
	\end{aligned}
\end{equation} 
Combining Inequality~\eqref{lem:2:1} and~\eqref{lem:2:4} completes the proof.
\end{proof}
Now, we are ready to prove Theorem~\ref{th:conv:for DP-RSVRG} using Lemma~\ref{lem:2}.
\begin{proof}[Proof of Theorem~\ref{th:conv:for DP-RSVRG}]
	Let $\delta_{\min}=\min_{j=0,1,\dots,m-1}\delta_j$. Summing~\eqref{lem:2:0} over $j=0,1,\dots,m-1$ yields
	\begin{equation} \label{th:conv:for DP-SVRG:1}
	\begin{aligned}
		\frac{1}{m}\sum_{j=0}^{m-1}\mathbb{E}[\|\mathrm{grad}f(x_{k+1,j}^{(t)})\|^2] & \le \frac{R_{k+1,0}^{(t)}-R_{k+1,m}^{(t)}}{m\delta_{\min}} + \sum_{j=0}^{m-1}\frac{(\frac{1}{2}{L}_g+c_{j+1}\zeta)d\alpha^2\sigma^2}{m\delta_{\min}}  \\
		& \le \frac{\mathbb{E}[f(x_{k+1,0}^{(t)})-f(x_{k+1,m}^{(t)})]}{m\delta_{\min}} + \sum_{j=0}^{m-1}\frac{(\frac{1}{2}{L}_g+c_{j+1}\zeta)d\alpha^2\sigma^2}{m\delta_{\min}} \\
		& \le \frac{\mathbb{E}[f(\tilde{x}_{i,k}^{(t)})-f(\tilde{x}_{i,k+1}^{(t)})]}{m\delta_{\min}} + \sum_{j=0}^{m-1}\frac{(\frac{1}{2}{L}_g+c_{j+1}\zeta)d\alpha^2\sigma^2}{m\delta_{\min}}
	\end{aligned}
	\end{equation}
	since $R_{k+1,0}^{(t)} = \mathbb{E}[f(x_{k+1,0}^{(t)})]=\mathbb{E}[f(\tilde{x}_{i,k}^{(t)})]$ and $R_{k+1,m}\le \mathbb{E}[f(x_{k+1,m}^{(t)})]=\mathbb{E}[f(\tilde{x}_{i,k+1}^{(t)})]$.
	Continuing to sum~\eqref{th:conv:for DP-SVRG:1} over $k=0,1,\dots,K-1$, we have
	\begin{equation} \label{th:conv:for DP-SVRG:2}
		\begin{aligned}
			\frac{1}{mK}\sum_{k=0}^{K-1}\mathbb{E}[\|\mathrm{grad}f(x_{k+1,j}^{(t)})\|^2] \le \frac{\mathbb{E}[f(x^{(t)})-f(x^{(t+1)})]}{mK\delta_{\min}} + \frac{(\frac{1}{2}{L}_g+c_{0}\zeta)d\alpha^2\sigma^2}{\delta_{\min}}
		\end{aligned}
	\end{equation}
	where $c_{j}\le c_0$, $c_0>0$, $c_m=0$.
	Finally summing~\eqref{th:conv:for DP-SVRG:2} over $t=0,1,\dots,T-1$ gives
	\begin{equation} \label{th:conv:for DP-SVRG:3}
		\begin{aligned}
			\frac{1}{mTK}\sum_{k=0}^{K-1}\mathbb{E}[\|\mathrm{grad}f(x_{k+1,j}^{(t)})\|^2] \le \frac{f(x^{(0)})-f^*}{mTK\delta_{\min}} + \frac{(\frac{1}{2}{L}_g+c_{0}\zeta)d\alpha^2\sigma^2}{\delta_{\min}}.
		\end{aligned}
	\end{equation}
	Hence, using \textbf{Option 2} of the output of Algorithm~\ref{alg:PriRFed} and~\ref{alg:DP-RSVRG} yields
	$
		\mathbb{E}[\| \mathrm{grad}f(\tilde{x}) \|^2] \le \frac{f(x^{(0)})-f^*}{mTK\delta_{\min}} + \frac{(\frac{1}{2}{L}_g+c_{0}\zeta)d\alpha^2\sigma^2}{\delta_{\min}}.
	$
	Choosing $\beta={L}_g\zeta^{1-a_2}/{N}^{a_1/2}$ and solving recurrence relation $c_j$ using $\alpha$, $m$ given by~\cite[Theorem~2]{ZRS17}, i.e., $\alpha=\mu/({L}_g{N}^{a_1}\zeta^{a_2})$, $m=\lfloor {N}^{3a_1/2}/(3\mu\zeta^{1-2a_2}) \rfloor$, one can get that $c_0$ and $\delta_{\min}$ are bounded by, respectively,
	$
		c_0 \le \frac{\mu {L}_g}{{N}^{a_1/2}\zeta}(e-1), \hbox{ and }\delta_{\min}\ge \frac{\nu}{{L}_g{N}^{a_1}\zeta^{a_2}}
	$
	with $a_1\in(0,1]$, $a_2\in[0,2]$, $\mu\in(0,1)$ and $\nu>0$ being four freely positive constants. It follows that
	\begin{align*}
		\mathbb{E}[\| \mathrm{grad}f(\tilde{x}) \|^2] & \le \frac{{L}_g{N}^{a_1}\zeta^{a_2}}{\nu mKT}[f(x^{(0)})-f^*] + \frac{{L}_g{N}^{a_1}\zeta^{a_2}}{\nu}\left(\frac{1}{2}{L}_g+\frac{\mu {L}_g}{{N}^{a_1/2}}(e-1)\right)\frac{\mu^2}{{L}_g^2{N}^{2a_1}\zeta^{2a_2}}d\sigma^2  \\
		& = \frac{{L}_g{N}^{a_1}\zeta^{a_2}}{\nu mKT}[f(x^{(0)})-f^*] + \frac{d\mu^2\sigma^2}{\nu N^{a_1}\zeta^{a_2}}\left( \frac{1}{2}  + \frac{\mu}{{N}^{a_1/2}}(e-1) \right).
	\end{align*}
	Finally, taking $a_1=2/3$, $a_2=1/2$, $\mu=1/10$ yields the desired result with $c \geq \frac{10 N}{3 m}$.
\end{proof}

\bibliographystyle{alpha}
\bibliography{paper}

\end{document}